\newcommand{\be}{\begin{equation}}
\newcommand{\ee}{\end{equation}}
\newcommand{\beq}{\begin{eqnarray}}
\newcommand{\eeq}{\end{eqnarray}}
\newtheorem{thm}{Theorem}[section]
\newtheorem{lma}{Lemma}[section]
\newtheorem{prop}{Proposition}[section]
\newtheorem{cor}{Corollary}[section]
\theoremstyle{remark}
\newtheorem{rem}{Remark}[section]
\numberwithin{equation}{section}
\def\tr{\operatorname{tr}}
\def\be{\begin{equation}}
\def\ee{\end{equation}}
\def\bee{\begin{equation*}}
\def\eee{\end{equation*}}
\def\lf{\left}
\def\ri{\right}
\def\div{\mathbf{div}}
\def\m{\mathfrak{m}}
\def\Ric{\text{\rm Ric}}
\def\cS{\mathcal{S}}
\def\wn{\wt\nabla}
\def\wt{\widetilde}
\def\la{\langle}
\def\ra{\rangle}
\def\p{\partial}
\def\tr{\operatorname{tr}}
\def\e{\varepsilon}
\def\a{{\alpha}}
\def\b{{\beta}}
\def\R{\mathbb{R}}
\def\Vol{\mathrm{Vol}}
\def\cA{\mathcal{A}}
\def\cB{\mathcal{B}}
\def\cG{\mathcal{G}}
\begin{document}

\title{Monotone quantities of $p$-harmonic functions and their applications}

\author{Sven Hirsch}
\address{Department of Mathematics \\ Duke University \\ Durham, NC, 27708}
\email{sven.hirsch@duke.edu}

\author{Pengzi Miao$^\dagger$}
\address{Department of Mathematics \\ University of Miami\\ Coral Gables, FL, 33146}
\email{pengzim@math.miami.edu}

\author{Luen-Fai Tam$^*$}
\address{The Institute of Mathematical Sciences and the Department of Mathematics \\ The Chinese University of Hong Kong \\ Shatin, Hong Kong, China}
\email{lftam@math.cuhk.edu.hk}

\thanks{$^\dagger$Research partially supported by NSF grant DMS-1906423.}

\thanks{$^*$Research partially supported by Hong Kong RGC General Research Fund \#CUHK
14301517}

\maketitle

\markboth{Sven Hirsch, Pengzi Miao and Luen-Fai Tam}{$p$-harmonic functions and the mass}

\begin{abstract}
We derive local and global monotonic quantities associated to $p$-harmonic functions on manifolds with nonnegative scalar curvature.
As applications, we obtain inequalities relating the mass of asymptotically flat $3$-manifolds,
the $p$-capacity and the Willmore functional of the boundary.
As $ p \to 1$, one of the results retrieves a classic relation
that the ADM mass dominates the Hawking mass if the surface is area outer-minimizing.
\end{abstract}

\section{Introduction and statement of results}\label{s-intro}

The Riemannian Penrose inequality (RPI) in $3$-dimension relates the mass of an asymptotically flat
manifold to the area of its boundary if the boundary is the outermost
minimal surface in the sense that it is not
enclosed by another minimal surface.  The inequality was proved by
Huisken-Ilmanen \cite{HuiskenIlmanen2001}
in the case of connected boundary via a weak formulation of the inverse mean curvature flow.
The general case was proved by Bray \cite{Bray02} using a conformal flow of metrics. The higher
dimensional inequality was proved by Bray and Lee \cite{BrayLee09} in dimensions less than eight.

Recently, Agostiniani-Mantegazza-Mazzieri-Oronzio \cite{AMMO2022} gave
a new proof of the $3$-dimensional RPI in the case of connected boundary.
In their approach, a related inequality was  first established via $p$-harmonic functions with $p>1$. The  RPI was obtained  by letting $p \to 1$ using \cite[Theorem 1.2]{FM22} on the limiting behavior of $p$-capacity.
A main ingredient in the approach was a generalization to $p$-harmonic functions
of a monotonic property of harmonic functions found by Agostiniani-Mazzieri-Oronzio in \cite{AMO2021}.

In \cite{Miao2022}, some distinct monotone properties of harmonic functions were found by the second author.
It is natural to ask if those properties of harmonic functions can be generalized to $p$-harmonic functions,
and if so, what applications such a generalization may give.

This paper is motivated by the above questions.
Among other things, we retrieve in Corollary \ref{cor-mass-pc-Willmore}
that
\be \label{eq-g-Hawkingmass-m-intro}
\sqrt{ \frac{ | \p M | }{ 16 \pi } }  \left(
1 - \frac{1}{16 \pi} \int_{\p M} H^2 \right) \le   \mathfrak{m}, \
\ \text{if } \p M \ \text{is area outer-minimizing} .
\ee
Here area outer-minimizing means that every surface $\Sigma$ enclosing $\p M$ has larger area.
\eqref{eq-g-Hawkingmass-m-intro} represents a well-known relation
$$ \text{Hawking mass} \le \text{ADM mass} , $$
provided $\p M$ is area outer-minimizing
and $M$ has simple topology.
This was first proved by Huisken and Ilmanen  \cite{HuiskenIlmanen2001} via the method of
weak inverse mean curvature flow. In Corollary \ref{cor-mass-pc-Willmore}, we show \eqref{eq-g-Hawkingmass-m-intro}
can be derived from $p$-harmonic functions.

We recall that a complete Riemannian $3$-manifold $(M,g)$
is asymptotically flat (AF) if there is a compact set $K$ such that $M\setminus K$ is diffeomorphic to the exterior of a ball in
$\R^3 $, such that if $\delta_{ij}$ is the Euclidean metric, then for $0\le l\le 2$
\be\label{e-AF-1}
| g_{ij}-\delta_{ij}|=O_2(r^{-\sigma})
\ee
as $r\to\infty$,  with $\sigma>\frac {1}2$, where $r$ is the Euclidean distance from a fixed point. This means
$|D^l(g_{ij}-\delta_{ij})|=O(r^{-l-\sigma})$ for $0\le l\le 2$. Suppose the scalar curvature $\cS$ of $M$ is integrable, then the ADM mass $\mathfrak{m}$ introduced in \cite{ADM61} is well-defined \cite{Bartnik,Chrusciel}:

\be\label{e-ADMmass}
\mathfrak{m}= \frac{1}{16 \pi} \lim_{r\to\infty}\int_{ S_r }\lf(g_{ij,i}-g_{ii,j}\ri)\nu^j_ed\sigma_e ,
\ee
where $S_r=\{|x|=r\}$,  $\nu_e$ is the unit outward normal to $S_r$ and $d\sigma_e$ is the area element on $S_r$  both with respect to the Euclidean metric $g_e$.
In this work, we prove the following:

\begin{thm}\label{thm-ratio-intro}
Let $(M^3,g)$ be a complete, orientable, asymptotically flat $3$-manifold with boundary $\p M$.
Suppose $\p M$ is connected and $H_2(M,\p M)= 0$.
For $1<p\le 2$, let $u$ be the $p$-harmonic function on $  M $ with $u=0$ on $\p M $, and $u\to 1$ at infinity.
If $g$ has nonnegative scalar curvature, then
\be \label{ineq-hu-gdu-intro}
4 \pi  +  \int_{\p M }   | \nabla u | H \ge a^{-2} ( 1 + 2a) \int_{\p M} | \nabla u |^2   ,
\ee
\be \label{ineq-Hup-intro}
c^{\frac1a} \left( 8\pi- a^{-1} \int_{\p M}|\nabla u|H \right) \le 4\pi (5-p)\mathfrak{m},
\ee
\be \label{ineq-dup-intro}
c^{\frac1a} \left(  4\pi-a^{-2} \int_{\p M}|\nabla u|^2 \right)
\le 4\pi (3-p)\mathfrak{m}.
\ee
Here $a = \frac{3-p}{p-1}$,
$ c  = a^{-1}\left( \frac{C_p}{ 4\pi} \right)^\frac{1}{p-1} $,  $C_p$ is the $p$-capacity of $\p M$ in $(M,g)$
and $H$ is the mean curvature of $\p M $.
If equality holds in any of the above inequalities, then $(M,g)$ is isometric to $\R^3$ outside a round ball.
\end{thm}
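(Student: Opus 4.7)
My plan is to derive all three inequalities from monotone quantities along the level sets of the $p$-harmonic function $u$, adapting to the $p$-harmonic setting the harmonic monotonicity identities found in \cite{Miao2022}, as signaled in the introduction. Since $u=0$ on $\p M$ and $u\to 1$ at infinity, for a.e.\ $t\in(0,1)$ the level set $\Sigma_t=\{u=t\}$ is a smooth hypersurface by Sard's theorem and $C^{1,\a}$ regularity for the $p$-Laplacian, with unit normal $\nu=\nabla u/|\nabla u|$, second fundamental form $A_t$, and mean curvature $H_t$. The hypothesis $H_2(M,\p M)=0$ together with connectedness of $\p M$ will be used to constrain the topology of regular level sets so that Gauss-Bonnet contributes $\int_{\Sigma_t}K_{\Sigma_t}\,dA=4\pi\,\chi(\Sigma_t)$, and in the generic case $\chi(\Sigma_t)=2$. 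Coupled with the Gauss equation
\[
2K_{\Sigma_t}=\cS-2\Ric(\nu,\nu)+H_t^2-|A_t|^2,
\]
this converts the topological term into an integral of ambient curvatures and extrinsic quantities that the assumption $\cS\ge 0$ can control from below.

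The next step is to apply the Bochner formula to $u$, combined with the $p$-harmonic equation $\mathrm{div}(|\nabla u|^{p-2}\nabla u)=0$. Decomposing the Hessian of $u$ into tangential and normal components of $\Sigma_t$ expresses the cross terms through $H_t$ and $|\n^{\Sigma_t}|\nabla u||$, and invokes a refined Kato-type inequality specific to $p$-harmonic functions (as in \cite{AMMO2022,AMO2021}). After integrating over $\Sigma_t$ against a carefully chosen power of $(1-u)$ (or of $u$), using the coarea formula to differentiate in $t$, and pairing with Gauss-Bonnet and the Gauss equation, the expected outcome is two monotone quantities of the schematic form
\[
\Phi_1(t)=f_1(t)\Big(4\pi-a^{-2}\!\!\int_{\Sigma_t}|\nabla u|^2\Big),\qquad \Phi_2(t)=f_2(t)\Big(8\pi-a^{-1}\!\!\int_{\Sigma_t}|\nabla u|H_t\Big),
\]
with weights $f_i(t)$ whose exponents are pinned down by matching the scaling-invariant parts of the Bochner term against the Gauss-Bonnet term, and with $\Phi_i'(t)\ge 0$ holding precisely because $\cS\ge 0$.

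Once the monotonicity $\Phi_i(0^+)\le \lim_{t\to 1^-}\Phi_i(t)$ is established, the three inequalities follow from identifying the two endpoints. The $t\to 0^+$ limit reproduces the boundary integrals on $\p M$ verbatim. The $t\to 1^-$ limit, corresponding to the end of $M$, is computed by expanding $u$ asymptotically against the Euclidean $p$-capacity potential normalized by $C_p$; the prefactor $c^{1/a}$ emerges from the matching constant, while the ADM mass $\mathfrak{m}$ appears through a standard surface-integral computation once the integrand is rewritten in asymptotic coordinates. Inequality \eqref{ineq-hu-gdu-intro}, which is a purely boundary statement, should come out either as the nonnegativity assertion $\Phi_1(0^+)\ge 0$ combined with a Cauchy-Schwarz relation between $\int_{\p M}|\nabla u|H$ and $\int_{\p M}|\nabla u|^2$ provided by the boundary Bochner step, or by integrating a separate but closely related monotone quantity of the same family starting from $\p M$.

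The hard part will be the derivation of a Bochner-Kato inequality for $p$-harmonic functions sharp enough that, after pairing with Gauss-Bonnet on $\Sigma_t$, every term without an a priori sign cancels; this is where the particular exponents $a^{-1}$, $a^{-2}$, $1+2a$, and $5-p$, $3-p$ in the theorem are fixed, and it is the most delicate step. Technical difficulties around the critical set of $u$, where $|\nabla u|$ vanishes and $\Sigma_t$ may fail to be smooth, are to be handled by the standard regularization $u_\e=u+\e$ together with $C^{1,\a}$ regularity and Sard, allowing the monotonicity to pass to the limit. For the rigidity claim, equality in any of the three inequalities forces saturation of the Bochner-Kato step at every level, which pins down the Hessian of $u$ to a rank-one tensor with a specific trace profile, makes each $\Sigma_t$ a totally umbilic round sphere, and forces $\cS\equiv 0$; an Obata/warped-product argument combined with the asymptotic flatness then identifies $(M,g)$ with the exterior of a round ball in $\R^3$.
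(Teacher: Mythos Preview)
Your overall strategy---building monotone quantities $\Phi_1,\Phi_2$ along level sets of $u$ via Bochner, Gauss--Bonnet, and the Gauss equation, then comparing their boundary values to their limits at infinity---is exactly the approach the paper takes (its quantities are called $\cB(t)$ and $\cA(t)$). However, three of your proposed technical steps would not work as written.

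First, the regularization $u_\e=u+\e$ does nothing for the critical set: adding a constant leaves $|\nabla u_\e|=|\nabla u|$ unchanged, so the singularities of $\nu$, $A_t$, and $H_t$ persist. The paper (following \cite{AMMO2022}) instead solves the uniformly elliptic equation $\div\big((|\nabla v|^2+\e^2)^{\frac{p-2}{2}}\nabla v\big)=0$ on finite regions, obtains smooth approximants $v_\e\to u$ in $C^{1,\beta}$ (and $C^\infty$ where $|\nabla u|>0$), proves the monotonicity for $v_\e$ with controlled error, and passes to the limit. This is the genuine technical content of the proof, and your proposal does not supply it. Second, the $t\to1^-$ limit is \emph{not} a standard surface-integral computation for $p\neq2$: only $u=1-cr^{-a}+o_2(r^{-a})$ is known, which is too weak to read off $\mathfrak m$ directly. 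The paper instead invokes the Hawking-mass upper bound of \cite[Lemma~2.5]{AMMO2022} on the level sets and integrates the evolution equation for $\int_{\Sigma_\tau}|\nabla u|H$ from $\tau$ to $1$ to obtain the constants $4\pi(5-p)$ and $4\pi(3-p)$; this is where those specific numbers come from, not from a pointwise asymptotic expansion. Third, inequality \eqref{ineq-hu-gdu-intro} is not obtained from $\Phi_1(0^+)\ge0$ plus Cauchy--Schwarz (Cauchy--Schwarz goes the wrong way here). In the paper it is exactly the statement $D(t)\ge0$ at the boundary, where $D(t)=t^{-a-1}\big((1+2a)\cB(t)-a\cA(t)\big)$ is a third quantity which is shown to be monotone \emph{non-increasing} and to tend to $0$ at infinity; its global nonnegativity is therefore a consequence of asymptotic flatness, not a boundary Bochner step.
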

 For simplicity, we only consider orientable manifolds in this work. By taking two-fold cover, some results are also true for non-orientable manifolds.
Besides retrieving \eqref{eq-g-Hawkingmass-m-intro}, other applications of Theorem \ref{thm-ratio-intro} include
sufficient conditions via $C^0$-data of regions separating the boundary and
the infinity, which imply  the positivity of the mass.
See Corollary \ref{thm-sec-pmt-bdry} for details.

We prove Theorem \ref{thm-ratio-intro} by exhibiting
a family of monotonic quantities for $p$-harmonic functions
and analyzing their asymptotic behavior.
 
First, let us  review the concepts of $p$-harmonic functions and $p$-capacity, {also see Section 1.1 in \cite{AMMO2022}}.
Let $(M, g)$ denote a complete Riemannian $3$-manifold  with boundary $\p M$, which is assumed to be compact throughout this work.
Given any $ p \in (1, 3)$, a function $u$ is called a $p$-harmonic function such that
$ u $ vanishes at $ \p M$ and $ u \to 1$ at infinity if  $u\in W^{1,p}_{loc} (M)$ and  satisfies
 \be\label{e-p-harmonic}
\left\{
  \begin{array}{ll}
    \div(|\nabla u|^{p-2}\nabla u)=0, & \hbox{in $M$ in the weak sense};\\
u=0&\hbox{on  $\p M$};\\
u(x)\to 1 &\hbox{as $x \to\infty$.}
  \end{array}
\right.
\ee
In case of asymptotically flat manifold, such a $u$ exists in $W^{1,p}_{loc}$ and
on any precompact set  $u$ is in $C^{1,\beta} $  for some $\beta>0$.
Moreover, $u$ is smooth whenever $|\nabla u|>0$, see \cite[Theorems 1,2]{Benedetto1}, see also \cite{Benedetto2}.
 the maximum principle holds for $u$, see \cite[{Lemma 3.18 and Theorem 6.5}]{HKM93},
which implies $ 0 \le u < 1$ and $ \p M = \{ u = 0 \}$.
Moreover, the Hopf lemma holds, see \cite[{Section 2}]{T83}, which shows $ | \nabla u | > 0 $ at $ \p M$.
As $x \to \infty$, $u$ has an asymptotic expansion of
\be\label{e-behavior}
u=1-cr^{-a}+o_2(r^{-a}) , \ \ r = | x|.
\ee
That is $|D^\ell u-D^\ell(1-cr^{-a})|=o(r^{-a-\ell})$ for $\ell=1, 2$.
See \cite[Theorem 3.1]{BenattiFogagnoloMazzieri}, where
$a=\frac{3-p}{p-1}$,   $ c  = a^{-1} \left(\frac{C_p}{4\pi} \right)^\frac 1{p-1} $,
and $C_p$  is the $p$-capacity of $\p M $ in $(M, g)$ given by
\bee
C_p = \inf \left\{ \int_M | \nabla \phi |^p \right\} ,
\eee
where the infimum is taken over all Lipschitz functions $\phi$ with compact support such that $\phi = 1$ at $ \p M$.
$C_p$ is related to $u$ by
$$ C_p = \int_M | \nabla u |^p = \int_{\{u = \tau\}} | \nabla u |^{p-1}  $$
if $\tau $ is a regular value of $u$, see \cite{BFM} for instance. Here we omit the volume element and the area element for simplicity.

We want to study quantities related to the mass of an AF manifold.
It was know in \cite[Lemma 2.2]{FanShiTam2009} (also see \cite[Proposition A.2]{JangMiao21}) that, as $ r \to \infty$,
\be \label{eq-H-A-m}
4\pi r-\int_{S_r}H +\frac{A(r)}r=8\pi \m  +o(1).
\ee
Here $H$ is the mean curvature and $A(r)$ is the area of $S_r$, respectively.
From this, one can check that any function $f(t)$ with $ f'(t) > 0 $, and if $ v (x) = f ( r (x) )  $, then
 \be \label{eq-vr-H-m}
 \begin{split}
 & \  4 \pi r - f'(r)^{-1} \int_{ \{v = f(r)\}  }  | \nabla v | H  +   f'(r)^{-2} r^{-1}\int_{ \{v  = f(r)\} }    | \nabla v |^2 \\
 = & \ 8\pi \m +o(1) .
 \end{split}
 \ee
  Given the $p$-harmonic function $u$, motivated by \eqref{e-behavior}, let   $f (t)=1-ct^{-a}$, if $f(t)$ is a regular value of $u$, then the above expression becomes:
\be \label{eq-F}
F (t) =4\pi t -(ca)^{-1}t^{a+1}\int_{\{ u = f(t)\}  }|\nabla u| H  + (ca)^{-2}t^{2a+1}\int_{\{ u = f(t)\} }|\nabla u|^2 ,
\ee
which is the quantity considered in  \cite{AMO2021, AMMO2022}.

Motivated by  \cite{Miao2022}, two other quantities may be constructed from
$u$ and $f$.
We define
\be \label{e-AB-1}
\left\{
  \begin{array}{ll}
    \cB(t)=4\pi t-(f')^{-2}t^{-1}\int_{\{ u = f(t)\}   }|\nabla u|^2=4\pi t-(ca)^{-2}t^{2a+1}\int_{ \{u = f(t)\} }|\nabla u|^2, \\
     \ \\
    \cA(t)=8\pi t-(f')^{-1}\int_{\{ u = f(t)\}  }|\nabla u| H=8\pi t-(ca)^{-1}t^{a+1}\int_{ \{u = f(t)\}  }|\nabla u| H.
  \end{array}
\right.
\ee
$F(t), \cA(t), \cB(t)$ are related to the Hawking mass of the level surface, see Appendix C.

Similar to $\Psi( s  )$ in \cite[Section 3]{Miao2022}, we also define
\be\label{e-D}
\begin{split}
D (t)= & \ t^{-a} \cB'(t) \\
= & \  4\pi t^{-a}  + c^{-1}   \int_{\{ u = f(t)\} }|\nabla u|H - (ca)^{-2}(1+2a)t^{a}\int_{\{u=f(t)\}}|\nabla u|^2 .
\end{split}
\ee
On the other hand,
if $(M, g)$ is a complete manifold without boundary and
$G$ is the positive $p$-harmonic Green's function with pole at $x_0$ and approaching $0$ at infinity   (provided it exists), then define
\be\label{e-pG-intro}
\cG(\tau)=-4a^2\pi \tau+\tau^{-1}\int_{ \{G = \tau\} }|\nabla G|^2.
\ee
This  was studied in \cite{MunteanuWang2021, CCLT2022}.  We will see in Section 2 that $\cA(t)$, $\cB(t)$, $D(t)$, $F(t)$ and $\cG (\tau)$ and their monotone properties are closely related.

The monotone property of $F$ for $p$-harmonic functions was proved in \cite{AMO2021,AMMO2022} and the monotone properties of $\cA, \cB, D$ for harmonic functions were obtained in \cite{Miao2022}. In this work, we prove the following:
\begin{thm}\label{t-intro-1}
For $1<p\le 2$, suppose $u$ is  a $p$-harmonic function on
a complete, orientable Riemannian $3$-manifold $(M, g)$ with compact boundary $\p M$
such that  $u=0$ on $\p M $ and $u\to 1$ at infinity.
Suppose $ \p M$ is connected, $H_2 (M, \p M) = 0 $, and $g$ has non-negative scalar curvature.
Let $f(t), \cA(t), \cB(t), D(t)$ be given as above. Let $\Sigma(t)=\{u=f(t)\}$.
Suppose $ 0 < t_1<t_2$
 such that $\Sigma(t_1) $ and $\Sigma(t_2)$ are regular.
Then
   \begin{enumerate}

     \vspace{.2cm}

     \item [(i)]{  (Local monotonicity) $D(t_1)\ge D(t_2)$.
          Moreover, if $D(t_1)=D(t_2)$, then   $\{f(t_1)<u<f(t_2)\}$ is isometric to an annulus in $\R^3$. }

     \vspace{.2cm}

       \item[(ii)] (Global positivity) If $(M, g)$ is asymptotically flat,
       then $D(t)\ge0$ and   $(1+2a) \cB(t) - a \cA(t)\ge 0$ whenever $\Sigma(t)$ is regular.
       Moreover, equality holds if and only if $\{u>f(t)\}$ is isometric to $\R^3$ outside a round ball.

       \vspace{.2cm}

     \item [(iii)] (Global monotonicity) If $(M, g)$ is asymptotically flat, then
     $$ \cB(t_2)\ge \cB(t_1) \ \ \text{and} \ \ \cA(t_2)\ge \cA(t_1). $$
     {Moreover, if $ \cB(t_1)= \cB(t_2)$ or $ \cA(t_1)= \cA(t_2)$, then
     $\{u>f(t_1)\}$ is isometric to $\R^3$ outside a round ball.}
   \end{enumerate}
\end{thm}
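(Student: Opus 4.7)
Starting point: two algebraic identities that tie the four quantities together. A direct computation from the definitions yields
\begin{equation*}
t^{a+1} D(t) = (1+2a) \cB(t) - a \cA(t), \qquad \cA(t) = F(t) + \cB(t).
\end{equation*}
Combined with the already-known monotonicity of $F$ from \cite{AMO2021, AMMO2022}, these identities reduce the whole theorem to understanding $D(t)$ and $\cB(t)$. The plan is therefore to first establish the local monotonicity of $D$ in (i), then use the asymptotic expansion \eqref{e-behavior} to derive the global positivity (ii), and finally assemble the global monotonicity (iii) using $\cB'(t) = t^{a} D(t)$ together with $\cA = F + \cB$.

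The heart of the matter is (i). I would parametrize $\Sigma(t) = \{u = f(t)\}$ by $t$ and compute $D'(t)$ by differentiating the integrals of $|\nabla u| H$ and $|\nabla u|^2$ over $\Sigma(t)$ via the coarea formula, with normal speed $f'(t)/|\nabla u|$ in the $\nu = \nabla u/|\nabla u|$ direction. This produces an integrand on $\Sigma(t)$ involving $\Ric(\nu,\nu)$, $|A|^2$, $H$, $|\nabla u|$ and $\nabla_\nu |\nabla u|$. The $p$-harmonic equation rewrites $\Delta u$ in terms of $\nabla_\nu |\nabla u|$, and a Bochner-type identity for $|\nabla u|^q$ with a suitable $q = q(p)$ (generalizing the harmonic case of \cite{Miao2022}) converts $\Ric(\nu,\nu)|\nabla u|^2$ into controllable terms. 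The trace Gauss equation in dimension three
\begin{equation*}
2 \Ric(\nu,\nu) = \cS + H^2 - |A|^2 - 2 K_{\Sigma(t)} ,
\end{equation*}
and Gauss--Bonnet $\int_{\Sigma(t)} K_{\Sigma(t)} = 2\pi \chi(\Sigma(t))$ then bring in $\cS$ and the Euler characteristic. The topological hypothesis ($\p M$ connected with $H_2(M,\p M) = 0$) forces each regular $\Sigma(t)$ to be a $2$-sphere, so $\chi = 2$ produces the $4\pi$ appearing in $D$. With $\cS \ge 0$ and a refined Newton-type inequality for $A$ carrying a $p$-dependent correction (sharper than $|A|^2 \ge \tfrac{1}{2} H^2$), one concludes $D'(t) \le 0$. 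Saturation forces $\cS = 0$, $A$ umbilic with the correct value, $|\nabla u|$ constant on each level, and the levels round, so $\{ f(t_1) < u < f(t_2) \}$ is isometric to a Euclidean annulus. This is the main obstacle, and it is where the restriction $p \le 2$ enters: through the sign of a $(p-2)$-weighted term in the rearrangement.

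For (ii), I would plug $u = 1 - c r^{-a} + o_2(r^{-a})$ and $H = 2/r + o(r^{-1})$ on $\Sigma(t)$, which approximates $\{r = t\}$, into the definition of $D$. Each of the three terms $4\pi t^{-a}$, $c^{-1} \int |\nabla u| H$ and $(ca)^{-2}(1+2a) t^a \int |\nabla u|^2$ is separately $O(t^{-a})$, with leading coefficients $4\pi$, $8\pi a$ and $4\pi(1+2a)$ that cancel exactly. Hence $\lim_{t \to \infty} D(t) = 0$, and together with (i) this yields $D(t) \ge 0$ for every regular $t$. The second inequality in (ii) is then just the identity $t^{a+1} D(t) = (1+2a) \cB(t) - a \cA(t)$. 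For rigidity, if $D(t_0) = 0$, the monotonicity of $D$ and the limit $D \to 0$ force $D \equiv 0$ on $[t_0, \infty)$, and the equality case of (i) propagates the Euclidean structure to all of $\{ u > f(t_0) \}$.

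For (iii), $\cB'(t) = t^a D(t) \ge 0$ by (ii), so $\cB$ is nondecreasing; then $\cA = F + \cB$ with $F$ nondecreasing by \cite{AMO2021, AMMO2022} gives monotonicity of $\cA$. If $\cB(t_1) = \cB(t_2)$ then $\cB' \equiv 0$ on $[t_1, t_2]$, hence $D \equiv 0$ there, and the argument in the previous paragraph extends this to all $t \ge t_1$, producing the Euclidean exterior. The case $\cA(t_1) = \cA(t_2)$ forces both $F$ and $\cB$ to be constant on $[t_1, t_2]$ by the individual monotonicities, and then combining the $F$-rigidity of \cite{AMO2021, AMMO2022} with the $\cB$-rigidity just established gives the same Euclidean conclusion for $\{u > f(t_1)\}$.
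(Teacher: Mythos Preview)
Your high-level strategy for (ii) and (iii)---deduce $D(t)\to 0$ from the expansion \eqref{e-behavior}, then combine with (i) and the identities $t^{a+1}D=(1+2a)\cB-a\cA$, $\cA=F+\cB$, $\cB'=t^aD$---matches the paper's, and your rigidity bookkeeping is sound. The genuine gap is in (i), and it propagates to (iii).

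The issue is that for $p\neq 2$ the $p$-harmonic function $u$ is in general only $C^{1,\beta}$: near points where $|\nabla u|=0$ the Hessian $\nabla^2 u$ need not exist, so $H$, $A$, $u_{\nu\nu}$, $\nabla|\nabla u|$ and the co-area differentiation you invoke are not available. The hypotheses of the theorem only ask that the two endpoint level sets $\Sigma(t_1),\Sigma(t_2)$ be regular; critical points may well occur in the region between them. So ``compute $D'(t)$ by differentiating the integrals'' is not a legal step, and the identity $\cB'(t)=t^aD(t)$ you use in (iii) is likewise only formal unless $|\nabla u|>0$ throughout the relevant range. Your outline is exactly what the paper carries out in Proposition~\ref{t-section-2} \emph{under the extra hypothesis} $|\nabla u|>0$; removing that hypothesis is the content of Section~\ref{s-regular} and is the main technical work.

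The paper's fix is a two-layer regularization. First, on a truncated domain $\{0<u<f_0(T)\}$ one replaces $u$ by the smooth solution $v_\e$ of $\div(|\nabla v|_\e^{p-2}\nabla v)=0$ with the same boundary values, where $|\nabla v|_\e=(|\nabla v|^2+\e^2)^{1/2}$; then $v_\e\to u$ in $C^{1,\beta}$ globally and in $C^\infty$ where $|\nabla u|>0$. One writes $D_\e(t_2)-D_\e(t_1)$ as the flux of a vector field $X_\e=W-U+V$ whose pieces involve $\nabla|\nabla v_\e|$ and $\Delta v_\e/|\nabla v_\e|$; since these still fail to be smooth at critical points of $v_\e$, a second regularization $U_\delta,V_\delta$ (replacing $|\nabla v_\e|$ by $|\nabla v_\e|_\delta$) is introduced. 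After computing $\div W$, $\div U_\delta$, $\div V_\delta$ (Lemma~\ref{l-div-WUV}), applying co-area with Morse--Sard, Gauss--Bonnet (the topological hypothesis gives that regular level sets are connected, hence $\int K\le 4\pi$---not that they are spheres, as you wrote, though the inequality is all that is needed), and the traced Gauss equation with $\cS\ge 0$, one obtains $D_\e(t_2)-D_\e(t_1)\le \int E(\e)$ with $E(\e)\to 0$ uniformly bounded (Lemma~\ref{l-monotone-d}). Letting $\e\to 0$ and using $D_\e\to D$ at regular levels (Lemma~\ref{l-approx}) gives (i). The restriction $p\le 2$ enters in signing a term of the form $(2-p)(1-p)|\nabla v|^2|\nabla v|_\e^{-4}v_{\nu\nu}^2$. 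Part (iii) is proved by an analogous divergence computation for $\cB_\e$ (vector field $Z_\e-Y_\e$), which produces an integral of $D_\e$ plus an error of order $\e$ controlled via Lemma~\ref{l-monotone-d}(ii); one cannot simply integrate $\cB'=t^aD$ because that derivative is not known to exist across critical values.

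A separate minor point: your ``refined Newton-type inequality for $A$ carrying a $p$-dependent correction'' is not quite how the argument closes. What actually happens is that the $p$-harmonic relation $v_{\nu\nu}=-|\nabla v||\nabla v|_\e^2\big((p-1)|\nabla v|^2+\e^2\big)^{-1}H$ converts all $v_{\nu\nu}$-terms into $H^2$-terms with $p$-dependent coefficients; these are then balanced against $|A|^2\ge \tfrac12 H^2$ and a Cauchy--Schwarz split with a carefully chosen $\lambda$ (see \eqref{e-U-delta}, \eqref{e-I-II-2}). There is no inequality on $A$ beyond $|\mathring A|^2\ge 0$.
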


Theorem \ref{t-intro-1}, together with the asymptotically behaviors of $\cA(t)$ and $\cB(t)$, i.e. Proposition \ref{t-asy}, will imply  Theorem \ref{thm-ratio-intro}.

Next,   we want to give a unified treatment on the   monotone properties regarding $\cA(t)$, $\cB(t)$, $D(t)$, $F(t)$ and $\cG (\tau) $ at least if no critical points are present. First we summarise the results we want to consider in the following table:
\begin{table}[h]
{\begin{tabular}{|c|c|c|c|c|}
    \hline
  &type&  $p=2$ & $p\in(1,2)$ & IMCF $(p=1)$ \\
    \hline
$F$   &  local & Agostiani &Agostiani & \\
 & & -Mazzieri & -Mantegazza&  \\
 && -Oronzio \cite{AMO2021}  & -Mazzieri-Oronzio \cite{AMMO2022}&Geroch \cite{Geroch1973}\\
    \cline{1-4}
$ D$ &  local & Miao & Theorem \ref{t-intro-1} &Jang-Wald \cite{JangWald1977} \\
$\cA, \cB$ & non-local & \cite{Miao2022}  & in this work& Huisken-Ilmanen \cite{HuiskenIlmanen2001} \\
 \cline{1-4}
$\mathcal G$ &non-local& Munteanu&Chan-Chu& \\
&&-Wang \cite{MunteanuWang2021}&-Lee-Tsang \cite{CCLT2022}&\\
\hline
\end{tabular}}
\label{table-example}
\end{table}

Here non-locality in this table means either one made use of  the asymptotically flatness of the manifold,
or one used the asymptotically behavior near the pole of the Green's function.
The above monotone properties are consequences of a single formula described in the following theorem, assuming there is no  critical point.

\begin{thm}\label{T: integral formula-h-intro}
Let $(M,g)$ be a compact, $3$-dimensional Riemannian manifold with boundary $\p M$.
Suppose $\partial M $ consists of two connected components $\partial_+M$, $\partial_-M$.
Let $\a\in [-1,1]$ and $\b=0$  or $\frac2{1-\a}$ if $|\alpha| < 1$. Suppose  $u$ is a solution with $|\nabla u|>0$ to the boundary value problem
\be \label{eq-bdry-system}
\left\{
  \begin{array}{ll}
    \Delta u=\a\nabla^2u(\nu,\nu)+\frac{2|\nabla u|^2}u, & \hbox{in $M$} \\
    u=c_+, & \hbox{at $\p M_+$}\\
u=c_-, & \hbox{at $\p M_-$}\\
  \end{array}
\right.
\ee
for two positive constants $c_-<c_+$.
Here $\nabla^2u$ denotes the Hessian of $u$ and $\nu=\frac{\nabla u}{|\nabla u|}$.
Then the following equality holds:
    \begin{align}\label{main formula}
    \begin{split}
   &\frac1{u^\b}\lf(\mathcal R_\a(u)- \cS^t|\nabla u| \ri)\\\
   =& \ 2\div\left[\frac1{u^\b}\left( \nabla |\nabla u|- \Delta u \frac{\nabla u}{|\nabla u|}+ \frac{2\b-1}{\b-1}\frac{|\nabla u|\nabla u}u\right)\right]=:\div\,(u^{-\b} X) ,
\end{split}
\end{align}
 where $\cS^t=2K$ denotes the scalar  curvature of the level sets $\Sigma_t=\{u=t\}$, where $K$ is the Gaussian curvature,
  and
\bee
\begin{split}
    \mathcal R_\a(u)= & \, \cS|\nabla u|+|\nabla u|^{-1}|T|^2 -\a^2|\nabla u|^{-1} u_{\nu\nu}^2\\
\end{split}
\eee
in which $\cS$ is the scalar curvature of $M$, $ u_{\nu \nu} = \nabla^2 u (\nu, \nu)$, and
$$
T=\nabla^2u+u^{-1}\lf(\nabla u\otimes \nabla u - |\nabla u|^2 g\ri).
$$
As a result, upon integration,
\be \label{eq-integration-mono}
\begin{split}
   \frac12& \int_M \frac1{u^\b}  \mathcal R_\a(u) dV
   \\ =&2\pi\chi\int_{c_-}^{c_+}\frac1{t^\b}dt+    \lf(\int_{\partial_+ M}-\int_{\partial_- M}\ri)\frac1{u^b}\left(-H|\nabla u|+\frac{2\b-1}{\b-1}\frac{|\nabla u|^2}u\right)dA,
\end{split}
\ee
where $H=H_u$ is the mean curvature of the boundary with respect to $ \nabla u / | \nabla u | $
and $\chi$ is the Euler characteristic of $\p_-M$ and hence of every level set of $u$.
\end{thm}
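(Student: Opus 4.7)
The plan is to verify the pointwise identity \eqref{main formula} by a direct computation of $\div(u^{-\b} X)$, and then obtain \eqref{eq-integration-mono} by integrating over $M$ using the divergence theorem together with the coarea formula and Gauss-Bonnet on the level sets $\Sigma_t=\{u=t\}$. Since $|\nabla u|>0$ throughout $M$, every $\Sigma_t$ is a smooth closed surface, and the Euler characteristic $\chi=\chi(\Sigma_t)$ is constant in $t$.

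I would begin from the refined Bochner identity
\[
|\nabla u|\Delta|\nabla u| \;=\; |\nabla^2 u|^2 - |\nabla|\nabla u||^2 + \Ric(\nabla u,\nabla u) + \la\nabla\Delta u,\nabla u\ra .
\]
In a frame $\{\nu,e_1,e_2\}$ with $\nu=\nabla u/|\nabla u|$ and $e_i$ tangent to $\Sigma_t$, the identities $u_{ij}=-|\nabla u|h_{ij}$ (where $h$ is the second fundamental form of $\Sigma_t$) and $u_{\nu i}=e_i(|\nabla u|)$ give $|\nabla^2 u|^2 - |\nabla|\nabla u||^2 = |\nabla^\Sigma|\nabla u||^2 + |\nabla u|^2|h|^2$, and the traced Gauss equation $\cS^t = \cS - 2\Ric(\nu,\nu) + H^2 - |h|^2$ converts $\Ric(\nabla u,\nabla u)$ into a combination of $\cS$, $\cS^t$, $H^2$, $|h|^2$ contributions. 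I would next expand
\[
\tfrac12 \div(X) = \Delta|\nabla u| - \la\nabla\Delta u,\nu\ra + H\Delta u + \tfrac{2\b-1}{\b-1}\div\!\left(\tfrac{|\nabla u|\nabla u}{u}\right),
\]
using $\div(\nu) = (\Delta u - u_{\nu\nu})/|\nabla u| = -H$, which comes from $\Delta u = u_{\nu\nu} - |\nabla u|H$. Including the weight via $\div(u^{-\b}X) = u^{-\b}\div(X) - \b u^{-\b-1}\la\nabla u,X\ra$ adds further terms in $u^{-\b-1}|\nabla u|u_{\nu\nu}$, $u^{-\b-1}|\nabla u|\Delta u$, and $u^{-\b-2}|\nabla u|^3$.

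The heart of the argument is to substitute the structural equation $\Delta u = \a u_{\nu\nu} + 2|\nabla u|^2/u$ -- equivalently $|\nabla u|H = (1-\a)u_{\nu\nu} - 2|\nabla u|^2/u$ -- into the $H^2$ and $H\Delta u$ terms. A short computation yields $|\nabla u|H^2 + 2H\Delta u = (1-\a^2)u_{\nu\nu}^2/|\nabla u| - 4\a u_{\nu\nu}|\nabla u|/u - 4|\nabla u|^3/u^2$, which is precisely where the $-\a^2 u_{\nu\nu}^2/|\nabla u|$ piece of $\mathcal R_\a(u)$ comes from; the remainder of the expansion $|T|^2 = |\nabla^2 u|^2 + 2u^{-1}|\nabla u|^2(u_{\nu\nu}-\Delta u) + 2|\nabla u|^4/u^2$ is supplied by the tangential gradient term from Bochner, the $|h|^2$ contribution, and the cross terms arising from the weight. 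Matching the coefficient of $u^{-\b-1}|\nabla u|u_{\nu\nu}$ reduces to the algebraic identity $(1+\a)\b/(\b-1) = \b(1-\a)$, whose solutions are exactly $\b=0$ or $\b=2/(1-\a)$, while the coefficient of $u^{-\b-2}|\nabla u|^3$ collapses to the required value for every $\b$. This simultaneous matching of the $u_{\nu\nu}^2$, $u_{\nu\nu}|\nabla u|/u$, and $|\nabla u|^3/u^2$ coefficients is the main technical obstacle, though each step is mechanical.

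For \eqref{eq-integration-mono}, I would integrate \eqref{main formula} against $dV$. The coarea formula together with $\cS^t=2K$ and Gauss-Bonnet on $\Sigma_t$ gives
\[
\int_M \frac{\cS^t |\nabla u|}{u^\b}\, dV \;=\; \int_{c_-}^{c_+} t^{-\b}\!\int_{\Sigma_t}\!\cS^t\, dA\, dt \;=\; 4\pi\chi \int_{c_-}^{c_+} t^{-\b}\, dt ,
\]
and the divergence theorem applied to $u^{-\b}X$ produces boundary integrals over $\p_\pm M$; since each of $\p_\pm M$ is itself a level set of $u$, the ambient unit normal coincides with $\pm\nu$, so $\la\nabla|\nabla u| - \Delta u\,\nu,\nu\ra = u_{\nu\nu}-\Delta u = |\nabla u|H$, while the last summand of $X$ contributes $\tfrac{2\b-1}{\b-1}|\nabla u|^2/u$, reproducing the boundary integrand in \eqref{eq-integration-mono}.
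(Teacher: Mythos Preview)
Your approach is essentially the same as the paper's: both start from the Bochner identity, use the Gauss equation to introduce $\cS^t$, compute the divergence of each summand of $X$ together with the weight contribution $\la\nabla u^{-\b},X\ra$, substitute the structural equation $\Delta u=\a u_{\nu\nu}+2|\nabla u|^2/u$, and then integrate using the divergence theorem, coarea, and Gauss--Bonnet. The paper carries out the combination slightly more directly (it does not phrase the verification as ``matching coefficients''), but the content is identical; your observation that the $u_{\nu\nu}|\nabla u|/u$ coefficient forces $(1+\a)\b/(\b-1)=\b(1-\a)$, whence $\b=0$ or $\b=2/(1-\a)$, is exactly the hidden algebraic step behind the paper's third displayed equality in the proof.

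One caution on signs: with $\nu=\nabla u/|\nabla u|$ one has $\div\,\nu = (\Delta u - u_{\nu\nu})/|\nabla u| = H$ in the convention of the statement, so $\Delta u = u_{\nu\nu}+|\nabla u|H$ and $u_{\nu\nu}-\Delta u = -|\nabla u|H$. Your formulas $\div(\nu)=-H$ and $u_{\nu\nu}-\Delta u=|\nabla u|H$ carry the opposite sign; this propagates to your boundary computation, where the integrand should read $-H|\nabla u|+\tfrac{2\b-1}{\b-1}|\nabla u|^2/u$ rather than $+H|\nabla u|+\cdots$. Once this sign is corrected, everything lines up with the paper.
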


A motivation to \eqref{eq-bdry-system} is that it gives the equation $|x|$ satisfies in the setting of $\R^3$,
where $|x|^{-a}$ is a $p$-harmonic function
and $ 2 \log |x|$ is a solution to the inverse mean curvature flow.
See the discussions following Corollary \ref{c-integral formula-h}.


We want to add that all monotonic properties of $p$-harmonic functions
mentioned above have a model space of the exterior of a round ball in $\R^3$.
In \cite{Oronzio2022}, Oronzio obtains monotonicity formulae modeled
on Schwarzschild manifolds.

The organization of the paper is as follows. In section 2, we will prove Theorem \ref{T: integral formula-h-intro} and study the relation between $\cA, \cB, D, F, \cG$. In section 3, we will prove Theorem \ref{t-intro-1}, and in section 4, we will study the asymptotical behavior of $\cA, \cB$ and prove Theorem \ref{thm-ratio-intro}. We will give applications in section 5 and list some facts in the appendices.

We would like to acknowledge that some results in the paper were known to Man-Chun Lee and Tin-Yau Tsang.
We thank for their communications \cite{LY-1}.
{
We are also grateful for the anonymous referees whose comments improved the paper.
}

\section{Relating $\cA$, $\cB$, $D$, $F$ and $\cG$}

We start this section with a proof of Theorem \ref{T: integral formula-h-intro}.

\begin{proof}[Proof of Theorem \ref{T: integral formula-h-intro}] Since $|\nabla u|>0$, we have $c_-<u<c_+$ in the interior of $M$.
The following computations are from   \cite[(4.8)]{BHKKZ2021} and \cite{Stern}.
By Bochner's identity and the Gauss equation,
\bee
\begin{split}
    2 \Delta|\nabla u | =& \ 2|\nabla u|^{-1}(|\nabla u|^2 \Ric(\nu,\nu)+|\nabla^2u|^2+\langle \nabla \Delta u,\nabla u\rangle-|\nabla |\nabla u||^2)\\
    =& \ 2|\nabla u|^{-1}( |\nabla^2u|^2+\langle \nabla \Delta u,\nabla u\rangle-|\nabla |\nabla u||^2)\\
    & \ +|\nabla u|\lf(\cS-\cS^t+H^2-|A|^2\ri)
\end{split}
\eee
where $H$ is the mean curvature and $A$ is the second fundamental form of the level sets $\Sigma_t$. By Lemma \ref{l-basic-1}:
\bee
|\nabla^2u|^2-2|\nabla |\nabla u||^2=|\nabla u|^2|A|^2-u_{\nu\nu}^2,
\eee
and
$$
H=|\nabla u|^{-1}(\Delta u-u_{\nu\nu}).
$$
Replacing $H^2-|A|^2$ with $u$ and its derivatives, one has
\bee
\begin{split}
    2 \Delta|\nabla u|= & |\nabla u|^{-1}( |\nabla^2u|^2+2\langle \nabla \Delta u,\nabla u\rangle+(\Delta u)^2-2u_{\nu\nu}\Delta u )+|\nabla u|\lf(\cS-\cS^t\ri).
\end{split}
\eee
As a result, one has the following formula, see  \cite[(4.8)]{BHKKZ2021}:
\bee
\begin{split}
2\div\lf(\nabla |\nabla u|-\Delta u\frac{\nabla u}{|\nabla u|}\ri)=&\frac{|\nabla^2u|^2}{|\nabla u|}+(\cS-\cS^t)|\nabla u|-\frac{(\Delta u)^2}{|\nabla u|}\\
=&\frac1{|\nabla u|}\lf(|\nabla^2u|^2-\a^2u_{\nu\nu}^2-4\a u_{\nu\nu}\frac{|\nabla u|^2}u-\frac{4|\nabla u|^4}{u^2}\ri)\\
&+(\cS-\cS^t)|\nabla u|.\\
\end{split}
\eee
Moreover,
\bee
\begin{split}
 \div\lf(\frac{|\nabla u|\nabla u}{u}\ri)=& \frac{ |\nabla u|\Delta u}{u}+\frac{\la\nabla|\nabla u|,\nabla u\ra}u-\frac{|\nabla u|^3}{u^2}\\
=&\frac{|\nabla u|}u\lf((\a+1)u_{\nu\nu}+\frac{ |\nabla u|^2}u\ri).
\end{split}
\eee
Since
\bee
\frac12X= \nabla |\nabla u|- \Delta u \frac{\nabla u}{|\nabla u|}+ \frac{2\b-1}{\b-1}\frac{|\nabla u|\nabla u}u ,
\eee
hence
\bee
\begin{split}
\la \nabla u^{-\b}, X \ra=&-2\b u^{-\b-1}\lf( \la \nabla|\nabla u|,\nabla u\ra-|\nabla u|\Delta u+\frac{2\b-1}{\b-1}\frac{|\nabla u|^3}u\ri)\\
=&-2\b u^{-\b-1}\lf( (1-\a)|\nabla u|u_{\nu\nu}  +\frac{1}{\b-1}\frac{|\nabla u|^3}u\ri).
\end{split}
\eee
Therefore,
\bee
\begin{split}
&u^{\b}\div\,(u^{-\b} X)-(\cS-\cS^t)|\nabla u|\\
=&  \frac1{|\nabla u|}\lf(|\nabla^2u|^2-\a^2u_{\nu\nu}^2-4\a u_{\nu\nu}\frac{|\nabla u|^2}u-\frac{4|\nabla u|^4}{u^2}\ri)\\
&+\frac{2(2\b-1)}{\b-1}\frac{|\nabla u|}u\lf((\a+1)u_{\nu\nu}+\frac{ |\nabla u|^2}u\ri) \\
&-2\b \frac{|\nabla u|}u\lf((1-\a)  u_{\nu\nu} +\frac{1}{\b-1}\frac{|\nabla u|^2}u\ri)\\
=&\frac{1}{|\nabla u|}\bigg[|\nabla^2u|^2-\a^2u_{\nu\nu}^2+2(1-\a) u_{\nu\nu}\frac{|\nabla u|^2}u-\frac{2|\nabla u|^4}{u^2}\bigg]\\
=&\frac{\left |\nabla^2u+u^{-1}\lf( \nabla u\otimes \nabla u - |\nabla u|^2 g\ri)\right|^2}{|\nabla u|}-\frac{\a^2u_{\nu\nu}^2}{|\nabla u|}.
\end{split}
\eee
Note that $T(\nu,\nu)=u_{\nu\nu}$ because $u_\nu=|\nabla u|$. This verifies \eqref{main formula}.

Upon integration, one has
\bee
\begin{split}
 & \int_M \frac1{u^\b}   \mathcal R_\a(u)   dV-\int_{c_-}^{c_+}\frac1{t^b}\lf(\int_{\Sigma_t}\cS^t\ri)dt
   \\ =&  \int_{\p M_+}\left\la   u^{-\b}X,\frac{\nabla u}{|\nabla u|}\right\ra dA -\int_{\p M_-}\left\la   u^{-\b}X,\frac{\nabla u}{|\nabla u|}\right\ra dA ,
\end{split}
\eee
as  the unit outward normal to $\p M$ is $\nu$ at $\p_+ M $ and $ - \nu $ at $\p_-M$.
Using the identity $|\nabla u|H_u=\Delta u-\frac{\la\nabla |\nabla u|,\nabla u\ra}{|\nabla u|}$, one obtains
\bee
\begin{split}
\left\la X,\frac{\nabla u}{|\nabla u|}\right\ra = & \, 2\lf(\frac{\la \nabla |\nabla u|,\nabla u\ra}{|\nabla u|}- \Delta u +\frac{2\b-1}{\b-1}\frac{|\nabla u|^2}u \ri)\\
= & \, 2\lf(-H_u|\nabla u| +\frac{2\b-1}{\b-1}\frac{|\nabla u|^2}u \ri)
\end{split}
\eee
which implies equation \eqref{eq-integration-mono} by the Gauss-Bonnet theorem.
\end{proof}
Since $\a^2\le 1$,
\bee
\mathcal R_\a(u)=   \, \cS|\nabla u|+|\nabla u|^{-1}(|T|^2-T^2(\nu,\nu))+(1 -\a^2)|\nabla u|^{-1} u_{\nu\nu}^2\ge \cS|\nabla u|
\eee
 we have:

 \begin{cor}\label{c-integral formula-h}
 Suppose $\cS\ge0$ in Theorem \ref{T: integral formula-h-intro}, then
\be \label{cor-ineq}
\begin{split}
     2 \pi \chi  & \int_{c_-}^{c_+}\frac1{t^\b}   dt
     + \lf(\int_{\partial_+ M}-\int_{\partial_- M}\ri)\frac1{t^\b}\left(-H_u|\nabla u|+\frac{2\b-1}{\b-1}\frac{|\nabla u|^2}u\right)dA \\
\ge & \ 0 ,
\end{split}
\ee
where $\chi$ is the Euler characteristic of $\p_-M$.
If the equality holds and $ \a <1$,
$(M^3,g)$   is isometric to an annulus in $\R^3$ and $u=C\rho$ where $\rho$ is the Euclidean distance to the center of the annulus.
\end{cor}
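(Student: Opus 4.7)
My plan is to derive \eqref{cor-ineq} directly from the integral identity \eqref{eq-integration-mono} established in Theorem \ref{T: integral formula-h-intro}, together with the observation that the density $\mathcal{R}_\alpha(u)$ is non-negative under the hypotheses. First I would use the identity $T(\nu,\nu) = u_{\nu\nu}$ recorded at the end of the proof of Theorem \ref{T: integral formula-h-intro} to bound
$$|T|^2 \ge T(\nu,\nu)^2 = u_{\nu\nu}^2 \ge \alpha^2 u_{\nu\nu}^2$$
(the last inequality because $|\alpha|\le 1$), whence
$$\mathcal{R}_\alpha(u) = \cS\,|\nabla u| + |\nabla u|^{-1}\bigl(|T|^2-\alpha^2 u_{\nu\nu}^2\bigr) \ge 0$$
when $\cS \ge 0$. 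Substituting this non-negativity into \eqref{eq-integration-mono} yields \eqref{cor-ineq} immediately.

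The rigidity statement will come from tracing the equality case. If equality holds in \eqref{cor-ineq} then $\mathcal{R}_\alpha(u)\equiv 0$ on $M$, which forces both of the above non-negative summands to vanish. The first gives $\cS \equiv 0$ (since $|\nabla u|>0$), while the second, combined with $|T|^2 \ge u_{\nu\nu}^2$ and the strict inequality $\alpha^2 < 1$, forces $u_{\nu\nu}\equiv 0$ and $T\equiv 0$.

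The main step is to extract the Euclidean geometry from these three conditions. Observing that for any $e$ tangent to a level set one has $T(\nu,e) = \nabla^2 u(\nu,e)$, the vanishing of $T$ forces $|\nabla u|$ to be constant on each level set; meanwhile $u_{\nu\nu} = \nu(|\nabla u|) = 0$ says $|\nabla u|$ is constant along integral curves of $\nu$. Hence $|\nabla u| \equiv C$ is a positive constant on $M$. The tangential part of $T\equiv 0$ reads $\nabla^2 u|_{T\Sigma} = u^{-1}|\nabla u|^2\, g|_{T\Sigma}$, so every level set is totally umbilic with shape operator $(C/u)\,\id$. Setting $r = u/C$, the function $r$ is a distance function satisfying $\nabla^2 r = r^{-1}(g-dr\otimes dr)$. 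In Gaussian coordinates based on a level set this Hessian identity integrates to the warped-product form $g = dr^2 + r^2 h_0$ for some metric $h_0$ on a level set; then $\cS\equiv 0$ forces the Gauss curvature of $h_0$ to be $1$, so by compactness $h_0$ is the round unit-sphere metric. Therefore $(M,g)$ is isometric to an annulus in $\R^3$ with $u = C\rho$, $\rho$ the Euclidean distance to the center, as claimed. The part requiring most care will be the deduction of the warped-product form and the identification of $h_0$ with the round sphere, but once $|\nabla r|\equiv 1$ and the shape operator equals $r^{-1}\id$ this reduces to standard rigidity arguments.
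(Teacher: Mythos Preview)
Your argument for the inequality and for the rigidity in the range $|\alpha|<1$ is correct and is essentially the paper's argument, carried out a bit more directly (you work with $u$ itself rather than the paper's substitution $v=u^2$). The logical flow---$\mathcal R_\alpha(u)\ge0$, then in the equality case $\cS\equiv0$, $u_{\nu\nu}\equiv0$, $T\equiv0$, hence $|\nabla u|\equiv C$ and the warped-product form $g=dr^2+r^2h_0$ with $r=u/C$, finally $\cS=0$ forcing $K_{h_0}=1$---matches the paper's.

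There is one genuine gap. The rigidity statement is asserted for all $\alpha<1$, which includes $\alpha=-1$; but your deduction ``$\alpha^2<1$ forces $u_{\nu\nu}\equiv0$'' fails there, since $\alpha^2=1$. In that case $|T|^2=u_{\nu\nu}^2=T(\nu,\nu)^2$ only yields $T(X,Y)=T(X,\nu)=0$ for tangential $X,Y$; the normal component $u_{\nu\nu}$ need not vanish, so $|\nabla u|$ need not be globally constant, and your shortcut via $r=u/C$ breaks down. The paper treats $\alpha=-1$ separately: passing to $v=u^2$, the surviving information still gives that $\eta:=|\nabla v|$ is constant on each level set and the level sets are umbilic; then the equation $\Delta u=-u_{\nu\nu}+2|\nabla u|^2/u$ is used to show that $\eta$ obeys the same first-order ODE $\eta'=\tfrac{1}{2t}\eta$ in the level parameter $t=v$ as in the $|\alpha|<1$ case, after which the identification with a Euclidean annulus proceeds identically. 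You should either add this case or restrict your rigidity claim to $|\alpha|<1$.
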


\begin{rem}
One can classify the rigidity case of $\a  = 1$ too. As it is not needed in the main results,
we include it in Appendix \ref{s-equations}.
\end{rem}

\begin{proof} \eqref{cor-ineq} follows from Theorem \ref{T: integral formula-h-intro}.
If equality holds, then  $\cS=0$ and
\be
|T|^2 - T^2 (\nu, \nu) + ( 1 - | \alpha |^2 ) u_{\nu\nu}^2=0 .
\ee
Let $v=u^2$, $t_0^2=c_-$, $t_1^2=c_2$ so that $t_0\le v\le t_1$, then
$ T=\frac1u\lf(\nabla^2v-\frac{|\nabla v|^2}{2v}g\ri) $.
Since $|\a|\le 1$, we have  $T(X,Y)=0, T(X,\nu)=0$, for any $X, Y$ tangent to the level set of $v$,
and
\be\label{e-rigidity-1}
\nabla^2v(X,Y)=|\nabla v|A(X,Y);\  \nabla^2v(X,\nu)=X(|\nabla v|) , \
\ee
where $\nu=\nabla v/|\nabla v|$. Hence $|\nabla v|=:\eta$ is constant on each level set, and on $\{v=t\}$,
$$
A(X,Y)=\frac{|\nabla v|}{2v}g(X,Y), \ \ H=\frac{ |\nabla v|}{ v}=\frac\eta t,
$$
Therefore, the level sets are umbilical and
$$
g=\eta^{-2}(t)dt^2+\gamma_t ,
$$
where $\eta(t)=|\nabla v|$ depends only on $t$ and $\gamma_t$ is the induced metric on $\{v=t\}$. Now $\p_t \gamma_t=2\eta^{-1}(t)A_t=t^{-1}\gamma_t$ where $A_t$ is the level set $\Sigma_t=\{v=t\}$. So
$  \gamma_t= tt_0^{-1}\gamma_{t_0} $.
It remains to find $\eta$.

If $ | \a |<1$, then $u_{\nu\nu}=0$. So
$$\eta'=\frac1{|\nabla v|}v_{\nu\nu}=\frac{2u_\nu^2}{|\nabla v|}=\frac{|\nabla v|}{2v}=\frac1{2t}\eta .
$$
If $\a=-1$, then
\bee
\begin{split}
\frac{\eta^2}t= |\nabla v|H = \Delta v-v_{\nu\nu}
= -2v_{\nu\nu}+\frac{2\eta^2}t
= -2\eta\eta'+\frac{2\eta^2}t.
\end{split}
\eee
We still have $ \eta'=\frac1{2t}\eta $.
Hence, in either case,  $t^{-\frac12}\eta(t)=t^{-\frac12}_0\eta(t_0)$. So if we let $t=r^2$,  then
\bee
\begin{split}
g= & \ t_0\eta^{-2}(t_0)t^{-1}dt^2+t_0^{-1}t\gamma_{t_0}\\
=& \ t_0^{-1}\lf((n-1)^2H^{-2}(t_0) dr^2+r^2 \gamma_{t_0}\ri).
\end{split}
\eee
To find $\gamma_{t_0}$,  by Lemma \ref{l-basic-1} and the facts that $|\nabla v|$ is constant on the level set and the level set is umbilical, we have
\bee
\begin{split}
\eta(t)\frac{\p }{\p t}H=&-\frac12\lf( \frac32H^2-2K\ri) .
\end{split}
\eee
On the other hand
\bee
\frac{\p}{\p t}H=\frac{\p}{\p t}\lf(\frac{ \eta}{ t}\ri)=-\frac{ \eta}{2t^2}.
\eee
So $K=\frac14 H^2$ which is a positive constant. Thus, each level set is a sphere, and
$$
g=4t_0^{-1}H^{-2}(t_0)\lf(dr^2+r^2\sigma_0\ri),
$$
where $\sigma_0$ is the standard unit sphere in $\R^3$. This completes the proof.
\end{proof}

Now we illustrate how Corollary \ref{c-integral formula-h} relates to the previously mentioned {\it local} monotone quantities.
It should be emphasized that Corollary  \ref{c-integral formula-h} assumed $|\nabla u|>0$.
Such an assumption was not necessary in the corresponding results below.

\vspace{.2cm}

{\bf(I)} {\sl  Inverse mean curvature flow}:
Take $ \a=1$ and $ \b=0$.
The function $U=2\log u$ satisfies $\Delta U=\nabla_{\nu\nu}U+|\nabla U|^2$.
This means the level sets $\{\Sigma_U\}$ of $U$ flow by inverse mean curvature flow
and $|\Sigma_U|=|\Sigma_{0}|e^U$.
In this case,
\begin{align*}
0\le& \ 4\pi\int_{c_-}^{c_+} 1 \, dt
     +\lf(\int_{\partial_+ M}-\int_{\partial_- M}\ri)  \left(-2H|\nabla u|
     +2 \frac{|\nabla u|^2}u\right) \\
     =&8\pi (u(\Sigma_+)-u(\Sigma_-)-\frac12u(\Sigma_+)\int_{\Sigma_+}H^2 +\frac12u(\Sigma_-)\int_{\Sigma_+}H^2
\end{align*}
which implies
$ m_H(\Sigma_+)\ge m_H(\Sigma_-) $,
where $m_H$ is the Hawking energy \cite{Hawking68}:
\begin{align*}
    m_H(\Sigma)=\sqrt{\frac{|\Sigma|}{16\pi}}\left(1-\frac1{16\pi}\int_\Sigma H^2\right).
\end{align*}
The monotonic property for the Hawking energy under inverse mean curvature flowed was first proved by Georch \cite{Geroch1973} in case there are no critical points and by Huisken-Ilmanen \cite{HuiskenIlmanen2001} in general under some topological assumptions.

\vspace{.2cm}

{\bf(II)}
{\sl $p$-harmonic functions and the monotonicity formulas}:
Let $1>U>0$ be a positive $p$-harmonic function.
In \cite{AMO2021, AMMO2022} Agostiniani-Mantegazza-Mazzieri-Oronzio showed the following monotonicity formula: For $0<t_1<t_2$
$$
F(t_2)\ge F(t_1)
$$
where $F(t)$ is given by \eqref{eq-F} for  $U$. In case of $|\nabla U|>0$, this can also be derived from Corollary \ref{c-integral formula-h}. In fact, let $u=(1-U)^{-\frac1a}$. Then $u$ satisfies \eqref{eq-bdry-system} for $\a=2-p$ by Lemma \ref{l-equations}. Moreover, $U=1-ct^{-a}$ if and only if $u=c^{-\frac1a}t$. Moreover at $\{U=f(t)\}$, $|\nabla u|=c^{-\frac1a}(ca)^{-1}t^{a+1}|\nabla U|$ and $\nabla u/|\nabla u|=\nabla U/|\nabla U|$. Hence apply Corollary \ref{c-integral formula-h} to $u$ with $\b=0$, we obtain the monotonicity of $F(t)$ for $U$.

\vspace{.2cm}

{\bf(III)}
{\sl Harmonic functions and the monotonicity of $D(t)$}:
Let $U$ be a positive harmonic function.
In \cite[Lemma 3.1]{Miao2022} the second author showed
 $$
 D(t_1)\ge D(t_2)
 $$
 for $0<t_1<t_2$. As in {\bf(II)} in case $|\nabla U|>0$, let $u=(1-U)^{-1}$, the above result is also a consequence of Corollary \ref{c-integral formula-h} with $\b=2$.

\begin{rem}\label{rem:all monotonicities}
Conceptually, it should not come as a surprise that Theorem \ref{T: integral formula-h-intro} implies various other monotonicity formulas which rely on Gauss-Bonnet's theorem.
Informally speaking, this requires the Gaussian curvature term appearing in a divergence identity such as equation \eqref{main formula} to be of the form $f(u)|\nabla u|K$ for some function $f$.
In order to ensure that the remaining terms on the left hand side of \eqref{main formula} are non-negative, the freedom of choosing $f$ is drastically restricted.
\end{rem}

\begin{rem}
For $\b=0$, Theorem \ref{T: integral formula-h-intro} also generalizes to initial data sets $(M,g,k)$ satisfying the dominant energy condition.
In this case the equation
\bee\label{eq:riem}
 \Delta u=\a\nabla^2u(\nu,\nu)+\frac{2|\nabla u|^2}u
\eee
needs to be replaced by the system
\begin{align*}
    \Delta u=& -\tr_g(k)|\nabla u| + \a\nabla^2u(\eta,\eta) +\a k(\eta,\eta)|\nabla u| +\frac{3|\nabla u||\nabla v|+\langle\nabla u,\nabla v\rangle}{u+v}  \\
\Delta v=&\tr_g(k)|\nabla v|+\a\nabla^2v(\eta,\eta)-\a k(\eta,\eta)|\nabla v| +\frac{3|\nabla u||\nabla v|+\langle\nabla u,\nabla v\rangle}{u+v}
\end{align*}
where $\eta=\frac{\nabla u|\nabla v|+\nabla v|\nabla u|}{|\nabla u|\nabla v|+\nabla v|\nabla u||}$.
Note that in case $k=0$, we can set $u=v$ and the above system reduces to equation \eqref{eq:riem}.
For more details see Theorem 1.1. and Corollary 1.2 in \cite{Hirsch2022}.
We believe that for $\b=\frac2{1-\a}$ there is no analogue for initial data sets satisfying the dominant energy condition.
\end{rem}

Let $(M^3,g)$ denote a complete manifold with compact boundary $\p M$. As mentioned in the introduction, $\cA, \cB, D, F, \cG$ are closely related.
Let $u$ be a $p$-harmonic function satisfying $ u \to 1 $ at $\infty$ and $ u < 1 $ on $M$.
To facilitate a comparison with the monotonicity for $p$-harmonic Green's function from \cite{CCLT2022, MunteanuWang2021},
we define, for the given $u$,
\be \label{eq-G-t}
\cG (t) = -4a^2\pi c t^{-a} + ( c t^{-a} )^{-1}   \int_{ \{u = f(t)\}  }|\nabla u |^2 .
\ee
Note that if $(M, g)$ is complete without boundary and $G$ is the positive $p$-harmonic Green's function with pole at $x_0$
and approaching $0$ at infinity, then choosing
$ u = 1 - G  \ \text{and} \ f (t) = 1 - t^{-a} , $
we have
\be
\begin{split}
\cG (t) = & \ -4a^2\pi  t^{-a} + (  t^{-a} )^{-1}   \int_{\{ 1 - G  = 1 -  t^{-a} \}  }|\nabla u |^2 \\
= & \ \cG (\tau), \ \ \ \ \ \text{upon a substitution} \ \tau = t^{-a} ,
\end{split}
\ee
the quantity given in \eqref{e-pG-intro}. We have:


\begin{lma}\label{l-combined-1} Suppose $|\nabla u |>0$. Then
\begin{enumerate}

  \item [(i)] $ D(t)= t^{-a-1} [ (1+2a) \cB(t)- a \cA(t )]  $. $F(t)=\cA(t)-\cB(t)$.
  \item[(ii)] $ D'(t)=  - a t^{-a -1} F'(t) $. Hence $D'\le 0$ if and only if $F'\ge0$.
  \item [(iii)] $\cG(t) = - c a^2 t^{-a-1} \cB(t) $.
  \item [(iv)] $\cG'(t) = c a^3 t^{-a - 2}  F (t)$. Hence $\cG'\ge0$ if and only if $F\ge0$.
      \item[(v)] $\cB'\ge 0$ if and only if $D\ge0$.
\end{enumerate}
\end{lma}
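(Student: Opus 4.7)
The proof is essentially algebraic bookkeeping: everything follows by substituting the explicit definitions of $\cA$, $\cB$, $F$, $D$ and $\cG$ from \eqref{eq-F}, \eqref{e-AB-1}, \eqref{e-D} and \eqref{eq-G-t} and then differentiating two of the resulting identities. I do not expect any real analytic obstacle, since the hypothesis $|\nabla u|>0$ guarantees that the level sets are regular and the integral quantities are smooth in $t$.

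First I would handle (i) by direct expansion. Writing $I_1(t) = \int_{\{u=f(t)\}}|\nabla u|H$ and $I_2(t)=\int_{\{u=f(t)\}}|\nabla u|^2$, the identity $F=\cA-\cB$ is immediate, and the combination
\[
(1+2a)\cB(t)-a\cA(t) = 4\pi t - (1+2a)(ca)^{-2} t^{2a+1} I_2(t) + c^{-1} t^{a+1} I_1(t),
\]
so dividing by $t^{a+1}$ reproduces exactly the expression for $D(t)$ in \eqref{e-D}. Statement (iii) is analogous: multiplying out $-ca^2 t^{-a-1}\cB(t)$ yields the two terms defining $\cG(t)$. Statement (v) is immediate from the defining relation $D(t)=t^{-a}\cB'(t)$, since $t^{-a}>0$.

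For (ii), rather than differentiate the integrals $I_1, I_2$ directly, I would use the relation from (i) in the form $t^{a+1}D = (1+2a)\cB - a\cA$. Differentiating both sides gives
\[
(a+1)t^a D + t^{a+1} D' = (1+2a)\cB' - a\cA',
\]
and substituting $t^a D = \cB'$ on the left causes the $(a+1)\cB'$ term to cancel against part of the right, leaving
\[
t^{a+1} D' = a\cB' - a\cA' = -a F',
\]
which is the desired identity. Statement (iv) goes the same way starting from (iii): differentiating $\cG = -ca^2 t^{-a-1}\cB$ produces
\[
\cG' = ca^2(a+1)t^{-a-2}\cB - ca^2 t^{-a-1}\cB',
\]
and then replacing $\cB' = t^a D$ and using (i) to rewrite $t^{a+1}D = (1+2a)\cB - a\cA$ collapses the expression to $ca^3 t^{-a-2}(\cA - \cB) = ca^3 t^{-a-2} F$.

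The equivalences between signs of derivatives and signs of functions in (ii), (iv), (v) follow at once because $t^{-a-1}$, $t^{-a-2}$ and $t^{-a}$ are positive on $t>0$ and $c, a>0$. There is no hard step here; the main thing to keep straight is the bookkeeping of the powers of $t$ and the constants $(ca)^{-1}$ and $(ca)^{-2}$, and in particular making sure that the derivative identity in (ii) is obtained algebraically from (i) rather than by differentiating the level-set integrals, which would require additional first-variation computations.
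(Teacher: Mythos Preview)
Your proof is correct and matches the paper's approach almost exactly; the only difference is that for (i) the paper re-derives the explicit form of $D$ by computing $\cB'(t)$ via Lemma~\ref{l-basic-2} and the $p$-harmonic identity $\Delta u=\tfrac{2-p}{1-p}|\nabla u|H$, whereas you take the second line of \eqref{e-D} as already established. Since that line is precisely the output of that computation, your remark about avoiding first-variation formulas is slightly off---the computation is absorbed into \eqref{e-D} rather than bypassed---but the argument for (ii)--(v) is identical to the paper's.
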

\begin{proof} (v) follows from the definition of $D$.

(i) $F(t)=\cA(t)-\cB(t)$ follows from their definitions. By \eqref{e-AB-1}, \eqref{eq-F}, and Lemma \ref{l-basic-2}
\bee
\begin{split}
\cB'(t)=&4\pi  -(ca)^{-2}(1+2a)t^{2a}\int_{ u = f(t) }|\nabla u|^2-(ca)^{-1} t^{a}\int_{ u = f(t) }(2\Delta u-|\nabla u|H)\\
=&4\pi  -(ca)^{-2}(1+2a)t^{2a}\int_{ u = f(t) }|\nabla u|^2+(ca)^{-1} at^{a}\int_{ u = f(t) }( |\nabla u|H \\
=&t^{-1}\lf((1+2a) \cB(t)- a \cA(t)\ri),
\end{split}
\eee
because $u$ is $p$-harmonic so that $\Delta u=\frac{2-p}{1-p}|\nabla u| H$. By the definition of $D$ in \eqref{e-D}, (i) follows. To prove (ii), by (i) we have
\bee
\begin{split}
D'(t)=&-(a+1)t^{-1}D(t) +t^{-a}\lf((1+2a) \cB'(t) - a \cA'(t)\ri)\\
=&-at^{-a}F'(t).
\end{split}
\eee

 (iii) follows from \eqref{eq-G-t}, \eqref{e-AB-1} and \eqref{eq-F}. To prove (iv):
 \bee
 \begin{split}
  \cG'(t) =&   c a^2(1+a) t^{-a-2} \cB(t)- c a^2 t^{-1} D(t)\\
  =& c a^2(1+a) t^{-a-2} \cB(t) -ca^2t^{-a-2} [ (1+2a) \cB(t) - a \cA(t )]\\
  =&c a^3 t^{-a - 2}  F (t).
  \end{split}
  \eee
\end{proof}

\begin{prop}\label{t-section-2}
Let $(M^3,g)$ be a complete Riemannian manifold with nonnegative scalar curvature, with compact boundary $\p M$.
For $ 1 < p \le 3$, suppose $u$ is a $p$-harmonic function on $M$ with $u=0$ at $\p M$ and $ u \to 1$ at infinity.
Suppose $ \p M$ is connected and $ | \nabla u | > 0 $.
Then the following monotonicity holds:
   \begin{enumerate}
     \item [(i)] (Local monotonicity) $D(t_1)\ge D(t_2)$ and $F(t_2)\ge F(t_1)$,  $ \forall \ t_1 < t_2 $.
     Moreover, if equality holds, then   $\{f(t_1)<u<f(t_2)\}$ is isometric to an annulus in $\R^3$.

     \vspace{.1cm}

     \item [(ii)] (Global monotonicity)
     If $(M, g)$ is asymptotically flat,
       then
       \begin{itemize}
       \item $D(t)\ge0$ and   $(1+2a) \cB(t) - a \cA(t)\ge 0 $;
       \item  $ \cA(t)$ and $ \cB(t)$ are monotone non-degreasing.
     If $\cB(t_1)=\cB(t_2)$ or $\cA(t_1)=\cA(t_2)$, then $\{u>f(t_1)\}$ is isometric to $\R^3$ outside a round ball.
     \end{itemize}

   \end{enumerate}

\end{prop}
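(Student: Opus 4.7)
The plan is to reduce everything to Corollary \ref{c-integral formula-h} via the substitution $\tilde u := c^{-1/a}(1-u)^{-1/a}$ exhibited in case (II) just preceding the proposition. By Lemma \ref{l-equations} this substitution converts a $p$-harmonic $u$ into a solution $\tilde u$ of \eqref{eq-bdry-system} with $\alpha = 2-p \in [-1, 1)$ (since $1<p\le 3$). The level sets coincide, $\{\tilde u = t\} = \{u = f(t)\}$, the unit normals agree, and a direct computation gives $|\nabla \tilde u| = (ca)^{-1}t^{a+1}|\nabla u|$ on each level.

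For part (i), I apply Corollary \ref{c-integral formula-h} on the annular region $\{t_1 \le \tilde u \le t_2\}$ with $\beta = 0$. Substituting the relation for $|\nabla \tilde u|$, the boundary integrand $-H|\nabla \tilde u| + |\nabla \tilde u|^2/\tilde u$ reduces, at $\{\tilde u = t_i\}$, to exactly $F(t_i) - 4\pi t_i$; the corollary then reads
\[
F(t_2) - F(t_1) \ge (4\pi - 2\pi\chi)(t_2 - t_1),
\]
where $\chi$ is the Euler characteristic of a level set (constant in $t$ because $|\nabla u|>0$). Since every closed orientable surface satisfies $\chi \le 2$, the right-hand side is non-negative, establishing $F(t_2) \ge F(t_1)$. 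The monotonicity $D(t_1) \ge D(t_2)$ then follows from the identity $D'(t) = -at^{-a}F'(t)$ in Lemma \ref{l-combined-1}(ii) upon integration. Equality in either inequality forces $\chi = 2$ and triggers the pointwise rigidity of Corollary \ref{c-integral formula-h}, identifying $\{f(t_1) < u < f(t_2)\}$ with a Euclidean annulus.

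For part (ii), I invoke the asymptotic analysis of Section 4 (Proposition \ref{t-asy}), which, via the expansion \eqref{e-behavior} and the mass formula \eqref{eq-vr-H-m}, shows that under asymptotic flatness $\cA(t), \cB(t), F(t)$ admit finite limits at infinity and $D(t) \to 0$. Combined with the local monotonicity $D(t_1)\ge D(t_2)$ from (i), this forces $D(t) \ge 0$ everywhere. Lemma \ref{l-combined-1}(i) then yields $(1+2a)\cB(t) - a\cA(t) \ge 0$, while Lemma \ref{l-combined-1}(v) gives $\cB'(t) = t^a D(t) \ge 0$, so $\cB$ is monotone non-decreasing. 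Monotonicity of $\cA$ follows from $\cA = F + \cB$ together with the monotonicity of $F$ from (i).

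The main obstacle is the rigidity statement in (ii). An equality $\cB(t_1) = \cB(t_2)$ or $\cA(t_1) = \cA(t_2)$ forces, by the monotonicity of both $F$ and $\cB$ and the fact that their sum is $\cA$, that $F$ and $\cB$ are simultaneously constant on $[t_1, t_2]$, so $D \equiv 0$ there, and part (i) identifies $\{f(t_1) < u < f(t_2)\}$ with a Euclidean annulus on which $u = 1 - c|x|^{-a}$; a direct computation on this model gives $\cB(t_1) = 0$. Extending this local Euclidean structure to the whole end $\{u > f(t_1)\}$ is the delicate step; I plan to handle it via a continuation argument: the explicit Euclidean structure on the annulus determines the Cauchy data of $u$ and the induced geometry on the outer sphere $\{u = f(t_2)\}$, and the monotonicity of $\cB$, together with the asymptotic identification of $\cB_\infty$ with $\mathfrak{m}$ (from Proposition \ref{t-asy}) and the bound $0 = \cB(t_1) \le \cB(t) \le \cB_\infty$, pins $\mathfrak{m}=0$; positive mass rigidity (or equivalently, iterative application of the local rigidity of (i) to every sub-annulus where $\cB\equiv 0$) then identifies $\{u > f(t_1)\}$ with the exterior of a round ball in $\R^3$.
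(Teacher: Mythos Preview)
Your argument for (i) and for the monotonicity statements in (ii) follows the paper's line essentially verbatim: the substitution into a solution of \eqref{eq-bdry-system}, Corollary~\ref{c-integral formula-h} with $\beta=0$ for $F$, then Lemma~\ref{l-combined-1} to pass between $F$, $D$, $\cA$, $\cB$. Two cosmetic slips: the constant in $\tilde u$ should be $c^{1/a}$ (not $c^{-1/a}$) to make $\{\tilde u=t\}=\{u=f(t)\}$, and Lemma~\ref{l-combined-1}(ii) reads $D'(t)=-at^{-a-1}F'(t)$. Also, your ``every closed orientable surface satisfies $\chi\le2$'' is false for disconnected surfaces; you need that $\partial M$ is connected and $|\nabla u|>0$ to know the level sets are connected.

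The genuine gap is in the rigidity clause of (ii). Your main line does not close: Proposition~\ref{t-asy} gives only $\limsup_{t\to\infty}\cB(t)\le 4\pi(3-p)\m$, so from $\cB(t_1)=0$ and $\cB$ nondecreasing you obtain $\m\ge0$, not $\m=0$; invoking positive mass rigidity is therefore unjustified (and arguably circular in this context). Your parenthetical alternative is the right idea but is missing the step that makes it work. The clean argument is: equality $\cB(t_1)=\cB(t_2)$ (or $\cA(t_1)=\cA(t_2)$, which forces both $F$ and $\cB$ constant since $\cA=F+\cB$) gives $D\equiv0$ on $[t_1,t_2]$, hence $D(t_1)=0$; now combine the local monotonicity $D(t)\le D(t_1)=0$ for $t\ge t_1$ from (i) with the global sign $D(t)\ge0$ from (ii) to conclude $D\equiv0$ on all of $[t_1,\infty)$. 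Then the rigidity in (i), applied to $[t_1,t']$ for every $t'>t_1$, shows each $\{f(t_1)<u<f(t')\}$ is a Euclidean annulus with a common inner round sphere, and exhausting in $t'$ identifies $\{u>f(t_1)\}$ with the exterior of a round ball. The paper itself leaves this rigidity implicit, but this is the mechanism it has in mind.
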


\begin{proof}
(i) We have   seen $F'(t) \ge 0$ in Case ({\bf II}) after Corollary \ref{c-integral formula-h}.
It in turns shows (i) by Lemma \ref{l-combined-1} (ii). Or we can prove $D(t_1)\ge D(t_2)$ directly as follows. Let $w= \left(\frac{ 1 - u}{c} \right)^{-\frac1a}$, where $a=(3-p)/(p-1)$. Let $ \alpha = 2 - p $. Then $ w $ is a positive solution to
\bee
\Delta w = \alpha w_{\nu\nu}+2 w^{-1} |\nabla w|^2  .
\eee
Choose $\b=2/(1- \alpha )=2/(p-1)$, then $ 1 - \beta = -a $.
Given $t_1 < t_2$, by Corollary \ref{c-integral formula-h},
\bee
\begin{split}
0 \le & \ 4 \pi    \int_{t_1}^{t_2}\frac1{t^\b}
     + \lf(\int_{w = t_2 }-\int_{ w = t_1 }\ri)\frac1{t^\b}\left(-H_w|\nabla w|+\frac{2\b-1}{\b-1}\frac{|\nabla w|^2}w \right)  \\
= & - 4\pi a^{-1} (t_2^{-a} - t _1^{-a}) \\
& \ + \int_{\{ u = f(t_2) \}} \lf( - H_u  (ca)^{-1}  | \nabla u |+  (1+ 2 a)  a^{-1} (ca)^{-2} t_2^{ a } |\nabla u|^2 \ri) \\
& \ -  \int_{\{ u = f(t_1) \}} \lf( - H_u  (ca)^{-1}  | \nabla u |+  (1+ 2 a)  a^{-1} (ca)^{-2} t_1^{ a } |\nabla u|^2 \ri) \\
= & a^{-1} \left( D (t_1) - D(t_2) \right),
\end{split}
\eee
which proves (i). (The equality case follows from Corollary \ref{c-integral formula-h}.)
Note that, by Lemma \ref{l-combined-1} (ii), this implies $ F'(t) \ge 0 $.

(ii) Since $(M, g)$ is asymptotically flat, the asymptotical behavior of $u$ in \eqref{e-behavior} implies, as $ t \to \infty$,
$$ \int_{ u = f(t) } | \nabla u |^2 = O ( t^{-2a} ), \ \ \int_{ u = f(t) } H | \nabla u | = O ( t^{-a} ) .  $$
Therefore, $ D(t) = O ( t^{-a} ) $ by \eqref{e-D}. In particular, $ D(t) \to  0 $ as $ t \to \infty$. By (i), we have $ D (t) \ge 0 $.
As a result, by Lemma \ref{l-combined-1} (i), $( 1 + 2 a ) \cB(t) \ge \cA(t)$.
Also recall $ D(t) = t^{-a} \cB'(t)$ from \eqref{e-D}. Hence, $ D(t) \ge 0 $ shows $ \cB'(t) \ge 0 $, i.e. $\cB(t)$ is monotone non-decreasing.
Since  $ \cA (t) = \cB(t) + F(t) $, we see $\cA(t)$ is monotone non-decreasing as well.
\end{proof}

We may also obtain global monotone property for the Green's function using Lemma \ref{l-combined-1}.
 Suppose $G>0$ is the $p$-harmonic Green's function with pole at $x_0$ and $G\to 0$ at infinity.
Let $ u = 1 - G $ on $(M,g)$.
The behavior of $G(x)$ at the pole (see \cite{KV86, MRS19}) implies
$ F(t) \to 0 $ as $ t \to 0^+$.
By Case ({\bf II}) or Proposition \ref{t-section-2} (i), one knows $F'(t) \ge 0$.
Therefore, $ F(t) \ge 0 $.
By Lemma \ref{l-combined-1} (iv),  $ \cG'(t) \ge 0 $, which is equivalent to $\cG'(\tau) \le 0$.
This corresponds to the monotone property  of $G$ in  \cite{MunteanuWang2021, CCLT2022}. Hence we have:
\begin{prop}\label{p-Green} (Global monotonicity) Let $(M^3,g)$ be a complete Riemannian manifold. Suppose $M$ supports a positive $p$-harmonic Green's function $G$ with pole at $x_0$ and $G\to 0$ at infinity. Assume that $|\nabla G|>0$ on $M\setminus\{x_0\}$. Then $\cG'(\tau)\le0$, where $\cG(\tau)$ is as in \eqref{e-pG-intro}.
\end{prop}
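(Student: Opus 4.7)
The plan is to reduce the statement to the nonnegativity of the Agostiniani--Mazzieri--Oronzio quantity $F(t)$ and invoke Lemma \ref{l-combined-1}(iv). First, set $u = 1 - G$ on $M \setminus \{x_0\}$, so that $u$ is $p$-harmonic with $0 < u < 1$, $u \to 1$ at infinity, and $u \to 0^+$ near the pole. Choose $c = 1$ and $f(t) = 1 - t^{-a}$; then $\{u = f(t)\} = \{G = t^{-a}\}$, and the substitution $\tau = t^{-a}$ identifies $\cG(\tau)$ from \eqref{e-pG-intro} with $\cG(t)$ from \eqref{eq-G-t}, exactly as computed in the discussion preceding the proposition.

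Next I would establish $F'(t) \ge 0$ for every regular $t$. Since $|\nabla G| > 0$ on $M \setminus \{x_0\}$, every annular region $\{t_1 \le t \le t_2\}$ is compactly contained in $M \setminus \{x_0\}$, so Corollary \ref{c-integral formula-h} (as exploited in Case (II) following it) applies on any such region and produces $F(t_1) \le F(t_2)$. This is precisely the argument used for the local monotonicity half of Proposition \ref{t-section-2}(i), and it inherits the nonnegative scalar curvature hypothesis from Corollary \ref{c-integral formula-h}.

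The main step is to verify the asymptotic limit $\lim_{t \to 0^+} F(t) = 0$, which corresponds to $\tau \to \infty$ as the level set collapses to the pole. Here I would invoke the sharp asymptotics for the $p$-harmonic Green's function near its singularity from \cite{KV86, MRS19}: in a small neighborhood of $x_0$, $G(x) = C_0 r^{-a}(1 + o(1))$ with matching derivative estimates on $|\nabla G|$ and on the mean curvature $H$ of the nearby level sets $\{G = s\}$ as $s \to \infty$. Substituting these expansions into
\[
F(t) = 4\pi t - (ca)^{-1} t^{a+1} \int_{\{u = f(t)\}} |\nabla u|\, H + (ca)^{-2} t^{2a+1} \int_{\{u = f(t)\}} |\nabla u|^2,
\]
the three terms cancel at leading order and one obtains $F(t) \to 0$ as $t \to 0^+$. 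Combined with monotonicity, this forces $F(t) \ge 0$ for every regular $t$.

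Finally, Lemma \ref{l-combined-1}(iv) gives $\cG'(t) = a^3 t^{-a-2} F(t) \ge 0$. Since $\tau = t^{-a}$ is strictly decreasing in $t$ (as $a > 0$), the chain rule converts this to $\cG'(\tau) \le 0$, which is the desired conclusion. The principal technical obstacle is the cancellation proving $F(t) \to 0$ at the pole; this is not merely a matter of the leading-order blow-up of $G$ but rather requires the refined expansion of $G$ and its first and second derivatives from \cite{KV86, MRS19} in order for the three competing terms in $F(t)$ to annihilate each other in the limit.
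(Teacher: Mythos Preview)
Your proposal is correct and follows essentially the same route as the paper: set $u=1-G$, use the pole asymptotics of \cite{KV86,MRS19} to get $F(t)\to 0$ as $t\to 0^+$, combine with the local monotonicity $F'\ge 0$ from Case~(\textbf{II})/Corollary~\ref{c-integral formula-h} to obtain $F\ge 0$, and then apply Lemma~\ref{l-combined-1}(iv). One cosmetic slip: since $G\to +\infty$ at the pole, $u=1-G\to -\infty$ there (not $0^+$), so the claim ``$0<u<1$'' fails near $x_0$; this does not affect the argument, which only uses level sets away from the pole.
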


\begin{rem}
    It is worthy of writing $ \cB(t)$, $ \cA(t)$ directly via the level set $\{ u = s \}$ and the parameter $s$.
Let
$ t = \left( \frac{c}{1- s} \right)^\frac{1}{a} , $
$\cB(t)$ and $\cA(t)$ take the form of
\be
\cB( s ) = \left( \frac{c}{1- s} \right)^\frac{1}{a}  \left[ 4 \pi - \frac{1}{ a^2 ( 1 - s )^2 } \int_{ u = s } | \nabla u |^2 \right]
\ee
and
\be
\cA ( s ) = \left( \frac{c}{1- s} \right)^\frac{1}{a}  \left[ 8 \pi - \frac{1}{ a ( 1 - s ) } \int_{ u = s } | \nabla u | H \right] .
\ee
If $p=2$, then $a=1$,  these were given in \cite{Miao2022}.
A feature of such expressions is that they pinpoint a fact
$$ u^{(s)} : = \frac{ u - s}{ 1 - s } $$
is the p-harmonic function, vanishing on $\{ u = s \}$ and approaching $1$ at $\infty$.
This indicates $\cB (\cdot ) $ and $\cA (\cdot) $ are suitable scalings of
$$ 4 \pi - a^{-2} \int_{\p M_s } | \nabla u^{(s)} |^2  \ \  \ \text{and} \ \ \ 8 \pi - a^{-1}  \int_{\p M_s } | \nabla u^{(s)}  | H  $$
on $M_s = \{ u \ge s \}$.
\end{rem}

\section{Monotonicity of $\cA(t), \cB(t), D(t)$ via regularization}\label{s-regular}

We will prove Theorem  \ref{t-intro-1} stated in the introduction.
First, we point out that it suffices to establish the monotonicity of the quantities involved.
Once the monotonicity is shown, the equality case will follow by applying Corollary \ref{c-integral formula-h},
starting from the boundary $ \p M = \{ u = 0 \}$ which is a regular level set of $u$.

We recall the setting of Theorem \ref{t-intro-1}:
$(M^3,g)$ is a complete three manifold with smooth compact boundary $\p M$ such that
\begin{enumerate}
  \item [(i)] $\p M$ is connected;
  \item [(ii)] $H_2(M,\p M)=\{0\}$;
  \item [(iii)] $M$ has one end which is asymptotically flat; and
\item [(iv)] the scalar curvature $\cS$ of $M$ is nonnegative.
\end{enumerate}
Let $u$ be the solution of the $p$-harmonic equation so that $u=0$ on $\p M$ and $u\to 1$ near infinity. As before, we denote $ f(t)=f_0(t)$ in the theorem by
$$
f_0(t)=1-ct^{-a},
$$
where $a=(3-p)/(p-1)$,  $c =a^{-1}\lf(C_p/(4\pi)\ri)^\frac1{p-1}$, and $C_p$ is the $p$-capacity of $\p M$ in $M$. Given any $T>0 $ with $0<f_0(T)<1$,  the level set  $ \{u=f_0(T)\}$ is compact and will not intersect $\p M$ by strong maximum principle \cite[Theorem 6.5]{HKM93}. Assume this  level set  is a regular level set of $u$. We approximate $u$ by smooth functions. Following \cite{AMMO2022}, for any $\e>0$, let $v=v_\e$ be the solution of
\be\label{e-v-1}
\left\{
  \begin{array}{ll}
    \div(|\nabla v|_\e^{p-2}\nabla v)=0, & \hbox{in $M(T)$};\\
v=0&\hbox{on  $\p M$};\\
v=f_0(T)&\hbox{on  $\Sigma(T)$};
  \end{array}
\right.
\ee
where $M(T)=\{0<u<f_0(T)\}$, $\Sigma(t)=\{u=f_0(t)\}$, and for any $\eta>0$, 

\be
|\nabla v|_\eta=\sqrt{|\nabla v|^2+\eta^2}.
 \ee
 Then $v_\e$ is smooth. As $\e\to0$, $v_\e\to u$ in $C^{1,\beta}$ norm for some $\beta>0$, and $v_\e\to u$ in $C^\infty$ norm near the points where $|\nabla u|>0$ by \cite{Benedetto1,Benedetto2}.
Define
\be\label{e-capacity-1}
\left\{
  \begin{array}{ll}
   C_{p,\e}=\int_{\p M}|\nabla v|_\e^{p-2}|\nabla v|;\\
c_\e=a^{-1}\lf(\frac{C_{p,\e}}{4\pi}\ri)^{\frac1{p-1}}.\\
  \end{array}
\right.
\ee
Note that as $\e\to0$, $c_\e\to c$, $C_{p,\e}\to C_p$.
For $0<t<T$, let $f_\e(t)=1-c_\e t^{-a}$ and let  $\Sigma(\e,t)=\{v=v_\e=f_\e(t)\}$, which will be in the interior of $M(T)$, provided $\e$ is small enough.
Observe that $C_{p,\e}=\int_{\Sigma(\e,t)}|\nabla v|_\e^{p-2}|\nabla v|$  whenever $\Sigma(\e,t)$ is regular.
If $\Sigma(\e,t)$ is regular, define
  corresponding $D_\e(t)$ and $ \cB_\e(t)$ as follows:
\be\label{e-De}
\begin{split}
D_\e(t)=&4\pi t^{-a}  -(c_\e a)^{-2}(1+2a)t^{a}\int_{\Sigma(\e,t)}|\nabla v|^2+c^{-1}_\e   \int_{\Sigma(\e,t)}|\nabla v|H,
\end{split}
\ee
\be\label{e-Be}
\cB_\e(t)=4\pi t-(c_\e a)^{-2}t^{2a+1}\int_{\Sigma(\e,t)}|\nabla v_\e|^2.
\ee
Here $H=H_v$ is the mean curvature with respect to $\nu=\nabla v/|\nabla v|$. The corresponding $f, \cB, D$ for $u$ will also be denoted by $f_0, \cB_0, D_0$ etc.
By the proof of  \cite[Lemma 1.3]{AMMO2022}, the following fact is true:
\begin{lma}\label{l-approx} Suppose $\{u=f_0(t)\}$ is regular with $0<t<T$ which implies that  $0<f_0(t)< f_0(T)$. Then for $\e>0$ small enough, $\Sigma(\e,t)=\{v_\e=f_\e(t)\}$ is also regular. Moreover,
\bee
\lim_{\e\to 0}D_\e(t)=D_0(t); \ \lim_{\e\to 0} \cB_\e(t) = \cB_0(t).
\eee

\end{lma}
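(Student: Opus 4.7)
The plan is to reduce everything to two ingredients: (a) smooth convergence $v_\e\to u$ on an open neighborhood of the level set $\{u=f_0(t)\}$, and (b) the scalar convergence $c_\e\to c$ (equivalently $C_{p,\e}\to C_p$). Once both are in hand, the implicit function theorem together with uniform convergence of continuous integrands on converging hypersurfaces does the rest.

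More precisely, since $\Sigma(0,t):=\{u=f_0(t)\}$ is a regular level set and $|\nabla u|$ is continuous, there exists a tubular neighborhood $\mathcal{U}$ of $\Sigma(0,t)$, compactly contained in $\{0<u<f_0(T)\}$, on which $|\nabla u|\ge\delta>0$. By \cite{Benedetto1,Benedetto2}, $v_\e\to u$ smoothly on compact subsets of $\mathcal{U}$, so $|\nabla v_\e|\ge \delta/2$ on $\mathcal{U}$ for all small $\e$ and the full $C^2$-data of $v_\e$ converges uniformly on $\mathcal{U}$. Combined with $f_\e(t)\to f_0(t)$ (which follows from $c_\e\to c$), the implicit function theorem represents $\Sigma(\e,t)$ as a smooth graph over $\Sigma(0,t)$ converging to $\Sigma(0,t)$ in the $C^\infty$ topology; in particular $\Sigma(\e,t)$ is regular for $\e$ small. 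Uniform convergence of the integrands $|\nabla v_\e|^2$ and $|\nabla v_\e|H_{v_\e}$, together with convergence of the induced area elements, then gives
\bee
\int_{\Sigma(\e,t)}|\nabla v_\e|^2\to\int_{\Sigma(0,t)}|\nabla u|^2,\qquad \int_{\Sigma(\e,t)}|\nabla v_\e|H_{v_\e}\to\int_{\Sigma(0,t)}|\nabla u|H_u,
\eee
and combining this with the prefactor convergence $c_\e\to c$ in \eqref{e-De}, \eqref{e-Be} yields $D_\e(t)\to D_0(t)$ and $\cB_\e(t)\to\cB_0(t)$.

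The main obstacle is establishing $C_{p,\e}\to C_p$, since the smooth convergence statement above does not extend up to $\p M$ where $|\nabla u|$ is only known to be positive pointwise via the Hopf lemma. The way around is to exploit the divergence-free structure of $v_\e$: integrating $\div(|\nabla v_\e|_\e^{p-2}\nabla v_\e)=0$ over a sub-level set $\{v_\e<\sigma\}$ shows
\bee
C_{p,\e}=\int_{\Sigma(\e,\tau)}|\nabla v_\e|_\e^{p-2}|\nabla v_\e|
\eee
for any regular level $\Sigma(\e,\tau)\subset M(T)$. Choosing $\tau$ so that $\Sigma(0,\tau)$ is a regular level set of $u$ (for instance, a $\tau$ close to $T$ at which $u$ is regular), the smooth convergence argument above applied near $\Sigma(0,\tau)$ yields $\lim_{\e\to0}C_{p,\e}=\int_{\Sigma(0,\tau)}|\nabla u|^{p-1}=C_p$, which is exactly the content of \cite[Lemma 1.3]{AMMO2022} and completes the proof.
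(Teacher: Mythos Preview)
Your proposal is correct and follows essentially the same route as the paper, which simply invokes the proof of \cite[Lemma~1.3]{AMMO2022} without further detail; you have supplied the natural outline of that argument (smooth convergence of $v_\e$ to $u$ near regular level sets, convergence $C_{p,\e}\to C_p$ via the conserved flux, and then convergence of the parametrized surfaces and integrals). One small clarification: in your last paragraph the notation $\Sigma(\e,\tau)=\{v_\e=f_\e(\tau)\}$ already presupposes $c_\e$, so to avoid circularity you should evaluate the flux on a \emph{fixed} level set $\{v_\e=s_0\}$ for some regular value $s_0$ of $u$ (or simply on $\Sigma(T)$, where $v_\e\equiv f_0(T)$ by the boundary condition), and only afterwards conclude $c_\e\to c$ and $f_\e(t)\to f_0(t)$; with that adjustment the argument is complete.
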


To simplify notation, in what follows,   whenever there is no confusion, we will suppress the index $\e$.
For example, we denote $c_\e$ by $c$ and the original $c$, which is the limit of $c_\e$ as $\e\to0$, will be denoted by $c_0$ instead. Direct computations give:

\begin{lma}\label{l-acomputation}
Suppose $\Sigma(\e,t)$ is regular, then at this level set:
\bee
\left\{
  \begin{array}{ll}
    \Delta v=(2-p)\frac{|\nabla v|^2}{|\nabla v|_\e^2}v_{\nu\nu}\\
H=\frac1{|\nabla v|}(\Delta v-v_{\nu\nu})=-\frac1{|\nabla v|}\frac{(p-1)|\nabla v|^2+\e^2}{|\nabla v|_\e^2} v_{\nu\nu}
  \end{array}
\right.
\eee
where $$v_{\nu\nu}=\nabla^2v(\nu,\nu)=\frac{\la\nabla |\nabla v|,\nabla v\ra}{|\nabla v|}= \la\nabla |\nabla v|,\nu\ra$$
and $\nu=\nabla v/|\nabla v|$.
\end{lma}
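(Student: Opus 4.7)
The plan is to derive the two identities by direct computation, starting from the regularized $p$-harmonic equation $\div(|\nabla v|_\e^{p-2}\nabla v)=0$ and using that on a regular level set one has $\nu = \nabla v/|\nabla v|$ and $\langle \nabla|\nabla v|,\nabla v\rangle = |\nabla v|\,v_{\nu\nu}$.

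First I would expand the divergence using the product rule:
\begin{equation*}
0 = \div(|\nabla v|_\e^{p-2}\nabla v) = |\nabla v|_\e^{p-2}\Delta v + \langle \nabla(|\nabla v|_\e^{p-2}),\nabla v\rangle.
\end{equation*}
Since $|\nabla v|_\e^2 = |\nabla v|^2 + \e^2$, differentiating gives $\nabla |\nabla v|_\e = (|\nabla v|/|\nabla v|_\e)\nabla |\nabla v|$, hence
\begin{equation*}
\nabla(|\nabla v|_\e^{p-2}) = (p-2)|\nabla v|_\e^{p-4}\,|\nabla v|\,\nabla |\nabla v|.
\end{equation*}
Pairing with $\nabla v$ and invoking $\langle \nabla |\nabla v|,\nabla v\rangle = |\nabla v|\,v_{\nu\nu}$ yields
\begin{equation*}
0 = |\nabla v|_\e^{p-2}\Delta v + (p-2)|\nabla v|_\e^{p-4}\,|\nabla v|^2\,v_{\nu\nu}.
\end{equation*}
Dividing through by $|\nabla v|_\e^{p-2}$ (which is positive) gives the first claimed identity $\Delta v = (2-p)\tfrac{|\nabla v|^2}{|\nabla v|_\e^2}v_{\nu\nu}$.

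For the mean curvature identity, I would start from the standard fact that the mean curvature of the level set $\{v = \text{const}\}$ with respect to $\nu = \nabla v/|\nabla v|$ equals $H = \div(\nu)$, and expanding
\begin{equation*}
H = \frac{\Delta v}{|\nabla v|} - \frac{\langle \nabla v,\nabla |\nabla v|\rangle}{|\nabla v|^2} = \frac{\Delta v - v_{\nu\nu}}{|\nabla v|}.
\end{equation*}
Substituting the formula for $\Delta v$ just derived,
\begin{equation*}
|\nabla v|\,H = (2-p)\frac{|\nabla v|^2}{|\nabla v|_\e^2}v_{\nu\nu} - v_{\nu\nu} = \frac{(2-p)|\nabla v|^2 - |\nabla v|_\e^2}{|\nabla v|_\e^2}v_{\nu\nu},
\end{equation*}
and using $|\nabla v|_\e^2 = |\nabla v|^2 + \e^2$, the numerator simplifies to $-\bigl((p-1)|\nabla v|^2+\e^2\bigr)$, giving the second identity.

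There is no real obstacle here; the computation is elementary and requires only regularity of the level set so that $|\nabla v|>0$ along $\Sigma(\e,t)$, ensuring that the divisions by $|\nabla v|$ and $|\nabla v|_\e$ are legitimate and that $H$ is well-defined as a smooth quantity. The only minor point to be careful about is the sign convention for $H$, which is fixed here by choosing $\nu = \nabla v/|\nabla v|$ as the unit normal.
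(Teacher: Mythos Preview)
Your argument is correct and is exactly the direct computation the paper alludes to; the paper simply states ``Direct computations give:'' without writing out the details, and your expansion of $\div(|\nabla v|_\e^{p-2}\nabla v)$ together with the standard identity $H=|\nabla v|^{-1}(\Delta v - v_{\nu\nu})$ (Lemma~\ref{l-basic-1}) is precisely what is intended.
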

Hence if $\Sigma(\e,t)$ is regular, then
\be\label{e-De-1}
\begin{split}
D_\e(t)
=&c^{-1}  \int_{\Sigma(\e,t)}\bigg(4\pi C_{p,\e}^{-1}|\nabla v|^{p-2}_\e |\nabla v|   (1-v)  - a^{-2}(1+2a)(1-v)^{-1} |\nabla v|^2\\
&\ \ \ \ \ \ \ \ \ \ \ \   +\Delta v-v_{\nu\nu}\bigg).
\end{split}
\ee
  Let
\be\label{e-Xe-1}
X=X_\e=W-U+V
\ee
where
\bee
\left\{
  \begin{array}{ll}
    W=4\pi C_{p,\e}^{-1} (1-v)|\nabla v|^{p-2}_\e  \nabla v;\\
U=a^{-2}(1+2a)(1-v)^{-1} |\nabla v|\nabla v;\\
V=\frac{\Delta v}{|\nabla v|}\nabla v-  \nabla |\nabla v|.
  \end{array}
\right.
\eee
Since $U$ or $V$ may not be defined or smooth if $|\nabla v|=0$, we further regularize these functions as follows.
For, $\delta>0$, let
 \bee
\left\{
  \begin{array}{ll}
    U_\delta=a^{-2}(1+2a)(1-v)^{-1} |\nabla v|_\delta\nabla v,\\
V_\delta=\frac{\Delta v}{|\nabla v|_\delta}\nabla v-  \nabla |\nabla v|_\delta.
  \end{array}
\right.
\eee
 Then $W, U_\delta, V_\delta$ are smooth.
Suppose $0<t_1<t_2<T$ so that $  \Sigma(\e,t_1), \Sigma(,\e, t_2)$ are regular. One can see that
\be\label{e-limit-D}
D_\e(t_2)-D_\e(t_1)=\lim_{\delta\to0}\int_{\{f_\e(t_1)<v<f_\e(t_2)\}}\div (W-U_\delta+V_\delta).
\ee
\begin{lma}\label{l-div-WUV} \begin{enumerate}\
                               \item [(i)]

 \bee
\div \, W=-4\pi C_{p,\e}^{-1}|\nabla v|^{p-2}_\e|\nabla v|^2.
\eee
                               \item [(ii)] At $|\nabla v|=0$,
\bee
\div\, U_\delta=\delta a^{-2}(1+2a)(1-v)^{-1} \Delta v.
\eee
At $|\nabla v|>0$,
\bee
\begin{split}
\div\, U_\delta =&a^{-2}(1+2a)\lf( (2-p)\frac{|\nabla v|_\delta |\nabla v|^2}{|\nabla v|_\e^2}+\frac{|\nabla v|^2}{|\nabla v|_\delta}\ri)(1-v)^{-1}v_{\nu\nu}\\
&+a^{-2}(1+2a)(1-v)^{-2}|\nabla v|^2|\nabla v|_\delta.
\end{split}
\eee

 \item [(iii)] At $|\nabla v|=0$, $\div\, V_\delta \le0$. At $|\nabla v|>0$,
     \bee\begin{split}
     \div\, V_\delta\le&
     \lf(\frac{(2-p)^2|\nabla v|^4}{|\nabla v|_\e^4|\nabla v|_\delta}
-\frac{(2-p)|\nabla v|^4}{|\nabla v|_\e^2|\nabla v|_\delta^3}\ri)v_{\nu\nu}^2
 -|\nabla v|_\delta^{-1}|\nabla v|^2\lf(|A|^2+\Ric(\nu,\nu)\ri), \\
\end{split}
\eee
where $A$ the second fundamental form of the level surface with respect to the unit normal $\nu$.

                             \end{enumerate}

\end{lma}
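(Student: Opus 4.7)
All three parts reduce to the product rule $\div(fX)=\langle\nabla f,X\rangle+f\,\div X$ together with Lemma \ref{l-acomputation} and, for (iii), Bochner's identity. The key inputs I will invoke repeatedly are
\[
\nabla_k|\nabla v|_\delta=\frac{v_{ik}v_i}{|\nabla v|_\delta},\qquad \langle\nabla|\nabla v|_\delta,\nabla v\rangle=\frac{|\nabla v|^2\,v_{\nu\nu}}{|\nabla v|_\delta}\ \text{on}\ \{|\nabla v|>0\},
\]
along with the observation that on $\{|\nabla v|=0\}$ both $\nabla v$ and $\nabla|\nabla v|_\delta$ vanish, while the regularized equation $|\nabla v|_\e^{p-2}\Delta v+(p-2)|\nabla v|_\e^{p-4}v_iv_jv_{ij}=0$ forces $\Delta v=0$ there.

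Part (i) is a one-line check: expanding $W=4\pi C_{p,\e}^{-1}(1-v)|\nabla v|_\e^{p-2}\nabla v$ by the product rule produces a $(1-v)$ multiple of $\div(|\nabla v|_\e^{p-2}\nabla v)$, which vanishes by \eqref{e-v-1}, leaving exactly $-4\pi C_{p,\e}^{-1}|\nabla v|_\e^{p-2}|\nabla v|^2$. For (ii), write $U_\delta=g\,\nabla v$ with $g=a^{-2}(1+2a)(1-v)^{-1}|\nabla v|_\delta$ and expand $\div U_\delta=\langle\nabla g,\nabla v\rangle+g\,\Delta v$. On $\{|\nabla v|>0\}$ the displayed identities together with $\Delta v=(2-p)|\nabla v|^2v_{\nu\nu}/|\nabla v|_\e^2$ yield the stated expression immediately. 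On $\{|\nabla v|=0\}$, $\langle\nabla g,\nabla v\rangle=0$ since $\nabla v=0$, while $g=a^{-2}(1+2a)(1-v)^{-1}\delta$; this delivers the claimed formula.

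Part (iii) is the main technical step. Splitting $V_\delta=(\Delta v/|\nabla v|_\delta)\nabla v-\nabla|\nabla v|_\delta$ and using the identity $2|\nabla v|_\delta\Delta|\nabla v|_\delta=\Delta|\nabla v|^2-2|\nabla|\nabla v|_\delta|^2$ combined with Bochner's formula $\tfrac12\Delta|\nabla v|^2=|\nabla^2 v|^2+\langle\nabla\Delta v,\nabla v\rangle+\Ric(\nabla v,\nabla v)$, the $\langle\nabla\Delta v,\nabla v\rangle$ contributions cancel and reduce the expression to
\[
\div V_\delta=\frac{(\Delta v)^2-|\nabla^2 v|^2-\Ric(\nabla v,\nabla v)+|\nabla|\nabla v|_\delta|^2}{|\nabla v|_\delta}-\frac{\Delta v\,\langle\nabla|\nabla v|_\delta,\nabla v\rangle}{|\nabla v|_\delta^2}.
\]
On $\{|\nabla v|>0\}$, substituting the $p$-harmonic equation into the $(\Delta v)^2$ and $\Delta v\langle\nabla|\nabla v|_\delta,\nabla v\rangle$ terms produces exactly the two $v_{\nu\nu}^2$ coefficients in the claim. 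The main obstacle is the pointwise inequality $|\nabla^2 v|^2-|\nabla|\nabla v|_\delta|^2\ge|\nabla v|^2|A|^2$, which I plan to derive by combining Lemma \ref{l-basic-1}'s decomposition $|\nabla^2 v|^2=|\nabla v|^2|A|^2-v_{\nu\nu}^2+2|\nabla|\nabla v||^2$ with the two Kato-type bounds $v_{\nu\nu}^2=\langle\nabla|\nabla v|,\nu\rangle^2\le|\nabla|\nabla v||^2$ and $|\nabla|\nabla v|_\delta|^2=(|\nabla v|^2/|\nabla v|_\delta^2)|\nabla|\nabla v||^2\le|\nabla|\nabla v||^2$; these give $2|\nabla|\nabla v||^2-v_{\nu\nu}^2-|\nabla|\nabla v|_\delta|^2\ge 0$, as needed. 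Homogeneity disposes of the $\Ric(\nabla v,\nabla v)=|\nabla v|^2\Ric(\nu,\nu)$ term. Finally on $\{|\nabla v|=0\}$ every summand on the right vanishes except $-|\nabla^2 v|^2/|\nabla v|_\delta$, which is manifestly non-positive.
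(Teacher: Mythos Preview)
Your proposal is correct and follows essentially the same route as the paper: product rule plus the regularized equation for (i) and (ii), and Bochner's formula plus the Hessian decomposition $|\nabla^2 v|^2=|\nabla v|^2|A|^2+2|\wt\nabla|\nabla v||^2+v_{\nu\nu}^2$ for (iii). The only cosmetic difference is that the paper handles the inequality in (iii) by explicitly splitting $|\nabla|\nabla v||^2=|\wt\nabla|\nabla v||^2+v_{\nu\nu}^2$ and comparing $|\nabla v|^2/|\nabla v|_\delta^2\le 1$ termwise, whereas you phrase the same estimate via the two Kato bounds $v_{\nu\nu}^2\le|\nabla|\nabla v||^2$ and $|\nabla|\nabla v|_\delta|^2\le|\nabla|\nabla v||^2$; these are equivalent.
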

\begin{proof} (i) Since $v$ satisfies \eqref{e-v-1}
\be\label{e-W}
\begin{split}
\div \, W=
&\div\lf((1-v)|\nabla v|^{p-2}_\e\nabla v\ri)\\
=&4\pi C_{p,\e}^{-1}\lf[(1-v)\div\lf( |\nabla v|^{p-2}_\e\nabla v\ri)-\la \nabla v, |\nabla v|^{p-2}_\e\nabla v\ra\ri]\\
=&-4\pi C_{p,\e}^{-1}|\nabla v|^{p-2}_\e|\nabla v|^2.
\end{split}
\ee

(ii) At $|\nabla v|=0$, it is easy to see that
\bee
\div\, U_\delta=\delta a^{-2}(1+2a)(1-v)^{-1} \Delta v.
\eee
At $|\nabla v|>0$, by Lemma \ref{l-acomputation}, we have
\bee
\begin{split}
\div\, U_\delta=&a^{-2}(1+2a)\bigg( (1-v)^{-1}|\nabla v|_\delta \Delta v+(1-v)^{-2}|\nabla v|^2|\nabla v|_\delta\\
&+(1-v)^{-1}\frac{|\nabla v|}{|\nabla v|_\delta}\la \nabla |\nabla v|,\nabla v\ra\bigg)\\
=&a^{-2}(1+2a)\bigg( (1-v)^{-1}|\nabla v|_\delta \Delta v+(1-v)^{-2}|\nabla v|^2|\nabla v|_\delta\\
&+(1-v)^{-1}\frac{|\nabla v|^2}{|\nabla v|_\delta}\nabla^2v(\nu,\nu) \bigg)\\
=&a^{-2}(1+2a)\bigg( (2-p)(1-v)^{-1}\frac{|\nabla v|_\delta |\nabla v|^2}{|\nabla v|_\e^2}v_{\nu\nu}\\
&+(1-v)^{-2}|\nabla v|^2|\nabla v|_\delta+(1-v)^{-1}\frac{|\nabla v|^2}{|\nabla v|_\delta}v_{\nu\nu} \bigg).
\end{split}
\eee

(iii)  To compute $\div\, V_\delta$,  by Lemma \ref{l-acomputation} we have
\bee
\begin{split}
\div\lf(\frac{\Delta v}{|\nabla v|_\delta}\nabla v\ri)
=&\frac{(\Delta v)^2}{|\nabla v|_\delta}+\frac{\la \nabla \Delta v, \nabla v\ra}{|\nabla v|_\delta}- \frac12\frac{\Delta v}{|\nabla v|_\delta^3}\cdot  \la\nabla(|\nabla v|^2), \nabla v\ra\\
=&(2-p)^2\frac{|\nabla v|^4}{|\nabla v|_\e^4|\nabla v|_\delta}v_{\nu\nu}^2
-(2-p)\frac{|\nabla v|^4}{|\nabla v|_\e^2|\nabla v|_\delta^3}v_{\nu\nu}^2\\
&+\frac{\la \nabla \Delta v, \nabla v\ra}{|\nabla v|_\delta},\\
\end{split}
\eee
which is zero at the points where $|\nabla v|=0$ because $\Delta v$ will be zero at this point by \eqref{e-v-1}.

\bee
\begin{split}
\div(\nabla |\nabla v|_\delta)=&\Delta |\nabla v|_\delta\\
 =&\frac12|\nabla v|_\delta^{-1}\lf(\Delta |\nabla v|_\delta^2-2|\nabla |\nabla v|_\delta|^2\ri)\\
=& |\nabla v|_\delta^{-1}\lf(|\nabla^2v|^2+\Ric(\nabla v,\nabla v)+\la \nabla v,\nabla \Delta v\ra-|\nabla |\nabla v|_\delta|^2\ri)\\
=&|\nabla v|_\delta^{-1}\lf(|\nabla^2v|^2+\Ric(\nabla v,\nabla v)\ri)\\&+ \frac{\la \nabla \Delta v,\nabla v\ra}{|\nabla v|_\delta}-\frac14\frac{1}{|\nabla v|_\delta^3}|\nabla (|\nabla v|^2)|^2.
\end{split}
\eee
which is nonnegative at the points $|\nabla v|=0$ because $\Delta |\nabla v|_\delta\ge0$.

Hence if $|\nabla v|=0$, then $\div\,V_\delta\le0$. If $|\nabla v|>0$, then
\bee
\begin{split}
\div\, V_\delta=&(2-p)^2\frac{|\nabla v|^4}{|\nabla v|_\e^4|\nabla v|_\delta}v_{\nu\nu}^2
-(2-p)\frac{|\nabla v|^4}{|\nabla v|_\e^2|\nabla v|_\delta^3}v_{\nu\nu}^2+\frac{|\nabla v|^2}{|\nabla v|_\delta^3} |\nabla|\nabla v||^2\\
&-|\nabla v|_\delta^{-1}\lf(|\nabla^2v|^2+\Ric(\nabla v,\nabla v)\ri).
\end{split}
\eee
For $|\nabla v|>0$, $|\nabla |\nabla v||^2=\lf(|\wn |\nabla v||^2+v_{\nu\nu}^2\ri)$, where $\wn$ is the derivative on the level set. Hence if $|\nabla v|>0$, then
\be\label{e-V-delta}
\begin{split}
\div\, V_\delta=&(2-p)^2\frac{|\nabla v|^4}{|\nabla v|_\e^4|\nabla v|_\delta}v_{\nu\nu}^2
-(2-p)\frac{|\nabla v|^4}{|\nabla v|_\e^2|\nabla v|_\delta^3}v_{\nu\nu}^2+\frac{|\nabla v|^2}{|\nabla v|_\delta^3 }(|\wn |\nabla v||^2+v_{\nu\nu}^2) \\
&-|\nabla v|_\delta^{-1}\lf(|\nabla v|^2|A|^2+2|\wn|\nabla v||^2+v_{\nu\nu}^2+|\nabla v|^2\Ric(\nu,\nu)\ri)\\
\le& \lf((2-p)^2\frac{|\nabla v|^4}{|\nabla v|_\e^4|\nabla v|_\delta}
-(2-p)\frac{|\nabla v|^4}{|\nabla v|_\e^2|\nabla v|_\delta^3}\ri)v_{\nu\nu}^2  \\
&- |\nabla v|_\delta^{-1}|\nabla v|^2\lf(|A|^2+\Ric(\nu,\nu)\ri).
\end{split}
\ee
This completes the proof of the lemma.
\end{proof}

\begin{lma}\label{l-monotone-d} With the   assumptions and notation as in Lemma \ref{l-div-WUV}, assume $1<p\le 2$, and $0<t_1<t_2<T$ so that $\{u=f_0(t_1)\}, \{u=f_0(t_2)\}$ are regular. We have the following:
\begin{enumerate}
  \item [(i)] For $\e>0$ small enough, so that $\Sigma(\e,t_1)$, $\Sigma(\e,t_2)$ are regular, we have
  \bee
 D_\e(t_2)-D_\e(t_1) \le  \int_{\{f_\e(t_1)<v<f_\e(t_2)\}}E(\e)
\eee
where $E(\e)\ge0$ is a continuous function which is uniformly bounded independent of $\e$   and $E(\e)\to 0$ everywhere as $\e\to0$ in $M(T)$.

  \item [(ii)]   There is a constant $C$ independent of $\e$ such that
  \bee
  \int_{\{f_\e(t_1)<v<f_\e(t_2), |\nabla v|>0\}} \frac{|\Delta v|}{|\nabla v|} \le C\e^{-1}.
  \eee
\end{enumerate}

\end{lma}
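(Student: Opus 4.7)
The plan for part (i) is to begin with identity \eqref{e-limit-D}, substitute the explicit formulas for $\div W$, $\div U_\delta$, $\div V_\delta$ from Lemma \ref{l-div-WUV}, take the limit $\delta \to 0$, and bound the combined integrand from above. On the critical set $\{|\nabla v| = 0\}$ Lemma \ref{l-div-WUV} gives $\div W = 0$, $\div U_\delta = O(\delta)$ and $\div V_\delta \le 0$, producing a non-positive limiting contribution. On the regular set $\{|\nabla v| > 0\}$ I plug in the formulas and pass $|\nabla v|_\delta \to |\nabla v|$.

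The heart of the argument is a rearrangement showing that the combined integrand, viewed on the regular set, matches --- up to an explicit $\e$-error --- the integrand one would obtain by applying the divergence identity \eqref{main formula} from Theorem \ref{T: integral formula-h-intro} to $w = ((1-v)/c_\e)^{-1/a}$ with $\a = 2-p$ and $\b = 2/(p-1)$, exactly as in the proof of Proposition \ref{t-section-2}(i) for the un-regularized case. The deviation from the exact $p$-harmonic setting is captured by the factors $|\nabla v|^2/|\nabla v|_\e^2 = 1 - \e^2/|\nabla v|_\e^2$ appearing in the formulas of Lemma \ref{l-acomputation} for $\Delta v$ and $H$. Collecting these deviations yields a nonnegative pointwise error $E(\e)$, and the remaining principal contribution is non-positive once we invoke $\mathcal R_\a(w) \ge \cS|\nabla w|$, $\cS \ge 0$, and the Gauss--Bonnet contribution from the level surfaces (which have $\chi = 2$ by $\p M$ connected and $H_2(M,\p M) = 0$, so every regular level set is a topological sphere).

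The uniform boundedness of $E(\e)$ on the compact region $\{f_\e(t_1) \le v \le f_\e(t_2)\}$ and the pointwise convergence $E(\e) \to 0$ follow from the $C^{1,\b}$ convergence $v_\e \to u$ of \cite{Benedetto1,Benedetto2} together with $C^\infty$ convergence on open sets where $|\nabla u|$ is bounded away from zero. The main obstacle is the bookkeeping of this rearrangement: one must verify that every formally singular factor such as $1/|\nabla v|_\delta$ or $v_{\nu\nu}$ remains controllable, and that all the $\e$-corrections collect into a single, sign-definite, uniformly bounded $E(\e)$.

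For part (ii), Lemma \ref{l-acomputation} gives at points with $|\nabla v| > 0$
\[
\frac{|\Delta v|}{|\nabla v|} = (2-p)\,\frac{|\nabla v|}{|\nabla v|^2 + \e^2}\,|v_{\nu\nu}| \le \frac{2-p}{2\e}\,|\nabla^2 v|,
\]
using $|v_{\nu\nu}| \le |\nabla^2 v|$ and the AM--GM inequality $|\nabla v|/(|\nabla v|^2+\e^2) \le 1/(2\e)$. The estimate then reduces to a uniform-in-$\e$ $L^1$ bound for $|\nabla^2 v_\e|$ on $\{f_\e(t_1) \le v \le f_\e(t_2)\}$, which follows from standard Caccioppoli-type interior estimates for the equation $\div(|\nabla v|_\e^{p-2}\nabla v) = 0$ (uniformly elliptic for each fixed $\e > 0$) combined with the $\e$-independent $C^{1,\b}$ bound on $v_\e$.
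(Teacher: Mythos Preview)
Your outline for part (i) captures the right conceptual picture --- the regularized computation should reduce, modulo an $\e$-error, to the clean identity of Theorem~\ref{T: integral formula-h-intro} applied to $w=((1-v)/c_\e)^{-1/a}$ --- but you skip the step that actually makes the argument close. After passing to $\delta\to 0$ and applying the coarea formula, the integrand over a regular level set $\Sigma_\tau$ contains a cross term coming from $|\nabla v|^{-1}\div\,U$ of the form
\[
-a^{-2}(1+2a)\left(\frac{(3-p)|\nabla v|^2+\e^2}{(p-1)|\nabla v|^2+\e^2}\right)(1-v)^{-1}|\nabla v|\,H ,
\]
while the Ricci/second-fundamental-form contribution from $V$ gives an upper bound $-PH^2 + K$ with an $\e$-dependent coefficient $P$. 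The cross term $|\nabla v|\,H$ has no sign, and you cannot simply label it an ``$\e$-error'': its coefficient does \emph{not} tend to zero. The paper resolves this by applying Young's inequality with a free parameter $\lambda$, then choosing $\lambda = 2(1+2a)^{-1}P$ so that the $H^2$ contributions cancel exactly. What remains is $-a^{-2}(1+2a)(1-v)^{-2}|\nabla v|^2(1-Q)$, where $Q$ is an explicit ratio built from $\lambda$ and the $\e$-perturbed coefficients; one then checks algebraically that $Q\to 1$ both where $|\nabla u|>0$ (smooth convergence) and where $|\nabla u|=0$ (the prefactor $|\nabla v|^3$ kills it). Your sentence ``collecting these deviations yields a nonnegative pointwise error $E(\e)$'' hides precisely this mechanism; without it, it is not clear how to separate a principal part that is $\le 0$ after Gauss--Bonnet from a controlled remainder.

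For part (ii) your route is genuinely different from the paper's and is in principle valid. You reduce to a uniform-in-$\e$ bound on $\|\nabla^2 v_\e\|_{L^1}$ via Caccioppoli estimates for the regularized operator; for $1<p\le 2$ the weight $|\nabla v|_\e^{p-2}$ in the standard energy estimate is bounded below by a positive constant (using the uniform $C^1$ bound on $v_\e$), so this does give a uniform $L^2$ bound on $\nabla^2 v_\e$. The paper instead recycles the divergence computation already done for $V_\delta$: the boundary integral $\int\div\,V_\delta$ is uniformly bounded (since $v_\e\to u$ in $C^\infty$ near the regular boundary level sets), and combining this with the pointwise inequality in Lemma~\ref{l-div-WUV}(iii) yields $\int_{\{|\nabla v|>0\}} \frac{|\nabla v|}{|\nabla v|_\e^2}v_{\nu\nu}^2 \le C$ directly, from which the $\e^{-1}$ bound on $\int|\Delta v|/|\nabla v|$ follows by a one-line Cauchy--Schwarz. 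The paper's approach is more self-contained (no external regularity input beyond what is already set up); yours is shorter once the Caccioppoli estimate is granted.
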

\begin{proof} (i) We remark  that since $v=v_\e$ converges in $C^{1,\beta}(M(T))$ to $u$, $\Sigma(\e,t_1)$, $\Sigma(\e,t_2)$ are regular surfaces in the interior of $M(T)$,  provided $\e>0$ is small enough.

In the following, we denote $f_\e$ by $f$ for simplicity. By \eqref{e-limit-D},
\be \label{e-limit-d-1}
D_\e(t_2)-D_\e(t_1)=\lim_{\delta\to0}\int_{\{f(t_1)<v<f(t_2)\}}\div (W-U_\delta+V_\delta).
\ee
To prove (i), we need to estimate the above limit.  By Lemma \ref{l-div-WUV}, for any $\delta>0$,
\bee
\begin{split}
&\int_{\{f(t_1)<v<f(t_2)\}}\div(W-U_\delta+V_\delta)\\
\le& \int_{\{f(t_1)<v<f(t_2)\}}\div\,W
+C\delta \\
&-\int_{\{f(t_1)<v<f(t_2)\}}\mathbf{1}_{\{|\nabla v|>0\}}\div\, U_\delta
 +\int_{\{f(t_1)<v<f(t_2)\}}\mathbf{1}_{\{|\nabla v|>0\}}(\mathrm{I}_\delta-\mathrm{II}_\delta) .
\end{split}
\eee
for some $C>0$ independent of $\delta$, where $\mathbf{1}_Y$ is the characteristic function of the set $Y$, and
\bee
\left\{
  \begin{array}{ll}
    \mathrm{I}_\delta= \lf((2-p)^2\frac{|\nabla v|^4}{|\nabla v|_\e^4|\nabla v|_\delta}
-(2-p)\frac{|\nabla v|^4}{|\nabla v|_\e^2|\nabla v|_\delta^3}\ri)v_{\nu\nu}^2
-|\nabla v|_\delta^{-1}|\nabla v|^2\Ric(\nu,\nu) \\
   \mathrm{II}_\delta=|\nabla v|_\delta^{-1}|\nabla v|^2|A|^2.
  \end{array}
\right.
\eee
 Since $v$ is smooth, $v_{\nu\nu}$ is uniformly bounded on  $ \mathbf{1}_{\{|\nabla v|>0\}}$ and so $\mathbf{1}_{\{|\nabla v|>0\}}\div U_\delta $ is uniformly bounded. Similarly, $\mathbf{1}_{\{|\nabla v|>0\}}\mathrm{I}_\delta$   is uniformly bounded because Ricci  curvature is bounded. Also $ \mathrm{II}_\delta$ is nonnegative and is nondecreasing as $\delta \to0$, by dominated and monotone convergence theorems, we have:
\be\label{e-d-to-0}
\begin{split}
&\lim_{\delta\to0}\int_{f(t_1)<v<f(t_2)}\div (W-U_\delta+V_\delta)\\
\le&\int_{\{f(t_1)<v<f(t_2)\}}\div\,W+\int_{\{f(t_1)<v<f(t_2)\}}\mathbf{1}_{\{|\nabla v|>0\}}(-\div \, U +\mathrm{I}_0-\mathrm{II}_0) \\
=:&\mathrm{III}.
\end{split}
\ee
 Here in $\{|\nabla v|>0\}$,
$$
\mathrm{I}_0=\lim_{\delta\to0}\mathrm{I}_\delta;\ \ \mathrm{II}_0=\lim_{\delta\to0}\mathrm{II}_\delta.
$$
In particular, since the LHS in \eqref{e-limit-d-1} is finite, the integrals of $\div\, W $ and $\mathbf{1}_{\{|\nabla v|>0\}}(-\div \, U +\mathrm{I}_0)$ are finite, we have
\be\label{e-A-int}
\int_{\{f(t_1)<v<f(t_2)\}}\mathbf{1}_{\{|\nabla v|>0\}}|\nabla v||A|^2<\infty.
\ee
Let
$$\tau_1=f(t_1), \tau_2=f(t_2), \Sigma_\tau=\{v=\tau\}.
$$
By the co-area formula   and   the Morse-Sard's theorem, which implies the set of critical values of $v$ is of measure zero in $[\tau_1, \tau_2]$,  for any $L^1$ function $h$ in $\{\tau_1<v<\tau_2\}$
$$
\int_{\{\tau_1<v<\tau_2\}}|\nabla v|h=\int_{\{\tau_1<\tau<\tau_2, \tau\in R\}} \lf(\int_{\Sigma_\tau}h\ri)d\tau
$$
where $R$ is the set of regular values of $v$. We now apply this to different terms in $\mathrm{III}$.
\bee
\begin{split}
\int_{\{f(t_1)<v<f(t_2)\}}\div\,W=&
\int_{\{f(t_1)<v<f(t_2)\}}\lf(-4\pi C_{p,\e}^{-1} |\nabla v|^2|\nabla v|_\e^{p-2}\ri)\\
=&\int_{\{\tau_1<\tau<\tau_2, \tau\in R\}}\int_{\Sigma_\tau} \lf(-4\pi C_{p,\e}^{-1} |\nabla v||\nabla v|_\e^{p-2}\ri)d\tau\\
=&-\int_{\{\tau_1<\tau<\tau_2, \tau\in R\}}4\pi d\tau
\end{split}
\eee
On the other hand, in the set $ \{|\nabla v|>0\}$,
\be \label{e-U-delta-1}
|\nabla v|^{-1}\div\,U=a^{-2}(1+2a) \bigg[\lf((2-p)\frac{|\nabla v|^2}{|\nabla v|_\e^2}+1\ri)(1-v)^{-1}v_{\nu\nu}+(1-v)^{-2}|\nabla v|^2\bigg].
\ee
So $\mathbf{1}_{\{|\nabla v|>0\}}|\nabla v|^{-1}\div\,U $ is uniformly bounded because $v$ is smooth and $v\le f(t_2)\le f(T)<1$.
Hence
\bee
\int_{\{f(t_1)<v<f(t_2)\}}\mathbf{1}_{\{|\nabla v|>0\}} \div \, U=
\int_{\{\tau_1<\tau<\tau_2, \tau\in R\}}\int_{\Sigma_\tau}\mathbf{1}_{\{|\nabla v|>0\}}|\nabla v|^{-1}\div\,U.
\eee
Similarly, we also have
\bee
\int_{\{f(t_1)<v<f(t_2)\}}\mathbf{1}_{\{|\nabla v|>0\}} \mathrm{I}_0
=\int_{\{\tau_1<\tau<\tau_2, \tau\in R\}}\int_{\Sigma_\tau}\mathbf{1}_{\{|\nabla v|>0\}}|\nabla v|^{-1}\mathrm{I}_0.
\eee
Let  $\phi=:|A|^2 \mathbf{1}_{\{|\nabla v|>0\}}$. Since $|\nabla v|$ is continuous,  $|A|^2$ is smooth in  $\{|\nabla v|>0\}$, $\phi$ is measurable.  Since $\phi\ge0$, one can apply co-area formula to $\phi_k=\min\{\phi,k\}$ for $k\in \mathbb{N}$ so that
 $$
 \int_{\{f(t_1)<v<f(t_2)\}}|\nabla v|\phi_k=\int_{\{\tau_1<\tau<\tau_2, \tau\in R\}}\int_{\Sigma_\tau} \phi_k d\tau.
 $$
 Since $\phi_k\uparrow\phi$, one can apply monotone convergence theorem to both sides to conclude that
 $$
  \int_{\{f(t_1)<v<f(t_2)\}}|\nabla v|\phi =\int_{\{\tau_1<\tau<\tau_2, \tau\in R\}}\int_{\Sigma_\tau} \phi d\tau.
 $$
 However, by \eqref{e-A-int}, $|\nabla v|\phi=|\nabla v||A|^2\mathbf{1}_{\{|\nabla v|>0\}}=\mathrm{II}_0$ is integrable, we have
 \bee
\int_{\{f(t_1)<v<f(t_2)\}}\mathbf{1}_{\{|\nabla v|>0\}} \mathrm{II}_0
=\int_{\{\tau_1<\tau<\tau_2, \tau\in R\}}\int_{\Sigma_\tau}\mathbf{1}_{\{|\nabla v|>0\}}|\nabla v|^{-1}\mathrm{II}_0
\eee
which is finite.

Hence

\be\label{e-III}
\begin{split}
\mathrm{III}=&\int_{\{\tau_1<\tau<\tau_2, \tau\in R\}}\lf[ -4\pi+\int_{\Sigma_\tau}
( -|\nabla v|^{-1}\div\,U+|\nabla v|^{-1}\lf(\mathrm{I}_0-\mathrm{II}_0\ri))\ri]d\tau.
\end{split}
\ee
Here we have used the fact that $\mathbf{1}_{\{|\nabla v|>0\}}=1$ on $\Sigma_\tau$ for $\tau\in R$.
      Recall that,   for $1<p\le 2$,   if $|\nabla v|>0$, then
$$
v_{\nu\nu}=-\frac{|\nabla v|\,|\nabla v|^2_\e}{(p-1)|\nabla v|^2+\e^2}H.
$$
By \eqref{e-U-delta-1}, let $\lambda>0$ be a positive function to be determined later, if $|\nabla v|>0$, we have
\be\label{e-U-delta}
\begin{split}
&|\nabla v|^{-1}\div\,U\\
=&a^{-2}(1+2a) \bigg[-\lf((2-p)\frac{|\nabla v|^2}{|\nabla v|_\e^2}+1\ri)(1-v)^{-1}\frac{|\nabla v|\,|\nabla v|^2_\e}{(p-1)|\nabla v|^2+\e^2}H\\
&+(1-v)^{-2}|\nabla v|^2\bigg]\\
=&a^{-2}(1+2a) \bigg[-\lf( \frac{(3-p)|\nabla v|^2+\e^2}{(p-1)|\nabla v|^2+\e^2} \ri)(1-v)^{-1}|\nabla v|H +(1-v)^{-2}|\nabla v|^2\bigg]\\
\ge &-\frac12(1+2a)\lambda H^2 \\&+a^{-2}(1+2a)\lf(1-\frac12\lambda^{-1}\lf( \frac{|\nabla v|^2+(3-p)^{-1}\e^2}{|\nabla v|^2+(p-1)^{-1}\e^2} \ri)^2\ri)(1-v)^{-2}|\nabla v|^2\\
=&-\frac12(1+2a)\lambda H^2 +a^{-2}(1+2a)\lf(1-Q\ri)(1-v)^{-2}|\nabla v|^2
\end{split}
\ee
where
\be\label{e-Q}
Q=:\frac12\lambda^{-1}\lf( \frac{|\nabla v|^2+(3-p)^{-1}\e^2}{|\nabla v|^2+(p-1)^{-1}\e^2} \ri)^2.
\ee

At  the points where $|\nabla v|>0$, for $1<p\le 2$, we have

\be\label{e-I-II-2}
\begin{split}
|\nabla v|^{-1}(\mathrm{I}_0-\mathrm{II}_0)=  &|\nabla v|^{-2}\bigg[(2-p)^2\frac{|\nabla v|^4}{|\nabla v|_\e^4}-(2-p)\frac{|\nabla v|^2}{|\nabla v|_\e^2 }\bigg]v_{\nu\nu}^2-  (|A|^2+\Ric(\nu,\nu) )\\
\le & (2-p)(1-p)\frac{|\nabla v|^2}{|\nabla v|_\e^4}   v_{\nu\nu}^2-\frac34H^2  + K\\
= &  -\lf[\frac{(2-p)(p-1)|\nabla v|^4}{((p-1)|\nabla v|^2+ \e^2)^2} +\frac34\ri]H^2+K.\\
=:&-PH^2+K,
\end{split}
\ee
where $K$ is the Gaussian curvature of the level set and $P>0$ is the function inside the square bracket. Here we have used the facts that   $\cS\ge0$, 
$$
\Ric(\nu,\nu)=\frac12(\cS-2K+H^2-|A|^2)\ge -K+\frac12(H^2-|A|^2)
$$
and that 
$$
|A|^2=|\mathring A|^2+\frac12 H^2\ge\frac12 H^2,
$$
where $\mathring A$ is the traceless part of $A$.

Since $H_2(M,\p M)=\{0\}$ and since $\p M$ is connected, the level-set $\Sigma_\tau$ is also connected according to \cite[pages 9-10]{AMMO2022}.
Consequently, we have $\int_{\Sigma_\tau}K\le 4\pi$ for all regular $\tau$.
Hence for $\tau\in R$, if we choose $\lambda=2(1+2a)^{-1}P$, we have
\bee
\begin{split}
-4\pi+&\int_{\Sigma_\tau}
( -|\nabla v|^{-1}\div\,U+|\nabla v|^{-1}\lf(\mathrm{I}_0-\mathrm{II}_0\ri)\\
\le& \int_{\Sigma_\tau}(\frac12\lambda (1+2a)-P)H^2-a^{-2}(1+2a)(1-v)^{-2}|\nabla v|^2 (1-Q)\\
=& -\int_{\Sigma_\tau} a^{-2}(1+2a)(1-v)^{-2}|\nabla v|^2 (1-Q).
\end{split}
\eee

Combining with \eqref{e-III},  using co-area formula and Moser-Sard's theorem again ($Q$ is uniformly bounded, see below):
\be\label{e-III-2}
\begin{split}
\mathrm{III}\le& -\int_{\{\tau_1<v<\tau_2, \tau\in R\}}\int_{\Sigma_\tau} a^{-2}(1+2a)(1-v)^{-2}|\nabla v|^2 (1-Q)\\
 =&-\int_{\{f(t_1)<v<f(t_2)\}} a^{-2}(1+2a)(1-v)^{-2}|\nabla v|^3 (1-Q),
\end{split}
\ee
Since   $P\ge \frac34$ and $1<p\le 2$ so that
$$
(1+\frac{3-p}{p-1})^{-1}\le \frac{|\nabla v|^2+(3-p)^{-1}\e^2}{|\nabla v|^2+(p-1)^{-1}\e^2}\le 1,
$$
 from  the definition of $Q$ in \eqref{e-Q}, we conclude that $Q$ is uniformly bounded independent of $\e$.
Since $v\le f_0(T)<1$, $v\to u$ in $C^{1,\beta}$ norm for some $\beta>0$, we have $|\nabla v|\to |\nabla u|$ as $\e\to0$.  At the points where $|\nabla u|=0$, then one conclude that
$$
(1-v)^{-2}|\nabla v|^3 (1-Q)\to 0
$$
as $\e\to 0$. At the points where $|\nabla u|>0$, as $\e\to 0$ we have
$$
Q=\frac14(1+2a)\lf[\frac{(2-p)(p-1)|\nabla v|^4}{((p-1)|\nabla v|^2+ \e^2)^2} +\frac34\ri]^{-1}\lf(\frac{|\nabla v|^2+(3-p)^{-1}\e^2}{|\nabla v|^2+(p-1)^{-1}\e^2}\ri)^2 \to 1.
$$
Hence we also have:
$$
(1-v)^{-2}|\nabla v|^3 (1-Q)\to 0.
$$
From this, the fact that $v<f(T)<1$, $v$ converges in $C^{1,\beta}$ to $u$, \eqref{e-limit-D}, \eqref{e-d-to-0} and \eqref{e-III-2}  we conclude that (i) is true with
$$
E(\e)= a^{-2}(1+2a)(1-v)^{-2}|\nabla v|^3 |1-Q|.
$$

To prove (ii), if $p=2$, then $\Delta v=0$ and the result is obvious.
Observe that  as $\e\to0$,

\bee
\begin{split}
\int_{\{f(t_1)<v<f(t_2)\}}\div\, V_\delta=&\int_{\Sigma(t_2)}\la V_\delta, \frac{\nabla v}{|\nabla v|}\ra-\int_{\Sigma(t_1)}\la V_\delta, \frac{\nabla v}{|\nabla v|}\ra\\
\to&\int_{\Sigma(t_2)}(\Delta v-v_{\nu\nu}) -\int_{\Sigma(t_1)}(\Delta v-v_{\nu\nu})\\
\ge&-C_1
\end{split}
\eee
for some constant $C_1>0$ independent of $\e$, because  $\{u=f_0(t_1)\}, \{u=f_0(t_2)\}$ are regular and $v=v_\e$ converge in $C^\infty$ norm near these two level sets \cite{AMMO2022}.
 For $1<p<2$, by Lemma \ref{l-div-WUV}(iii),   for $|\nabla v|=0$, $\div\, V_\delta\le0$, so
\bee\begin{split}
                                  \int_{\{f(t_1)<v<f(t_2)\}}  \div\, V_\delta
                                \le&
                                  \int_{\{f(t_1)<v<f(t_2), |\nabla v|>0\}} \bigg[\lf(\frac{(2-p)^2|\nabla v|^4}{|\nabla v|_\e^4|\nabla v|_\delta}
-\frac{(2-p)|\nabla v|^4}{|\nabla v|_\e^2|\nabla v|_\delta^3}\ri)v_{\nu\nu}^2
\\&-|\nabla v|_\delta^{-1}|\nabla v|^2\Ric(\nu,\nu)\bigg] .
\end{split}
\eee

 On the other hand,
$$
\lf||\nabla v|_\delta^{-1}|\nabla v|^2\Ric(\nu,\nu)\ri|\le C_2
$$
by a constant $C_2$ independent of $0<\delta, \e\le 1 $. Hence, letting $\delta \to0$,
by the monotone convergence theorem, we have
\bee
\begin{split}
-C_3\le& \int_{\{f(t_1)<v<f(t_2), |\nabla v|>0\}}
\lf(\frac{(2-p)^2|\nabla v|^3}{|\nabla v|_\e^4 }
-\frac{(2-p)|\nabla v|}{|\nabla v|_\e^2}\ri)v_{\nu\nu}^2\\
&\le (2-p)(1-p) \int_{\{f(t_1)<v<f(t_2), |\nabla v|>0\}}  \frac{|\nabla v|}{|\nabla v|_\e^2 }v_{\nu\nu}^2
\end{split}
\eee
for some $C_3$ independent of $\e$. Hence
\bee
\int_{\{f(t_1)<v<f(t_2), |\nabla v|>0\}}  \frac{|\nabla v|}{|\nabla v|_\e^2 }v_{\nu\nu}^2\le (2-p)^{-1}(p-1)^{-1}C_3,
\eee
and
\bee
\begin{split}
&\int_{\{f(t_1)<v<f(t_2), |\nabla v|>0\}}\frac{|\Delta v|}{|\nabla v|}\\=&\int_{\{f(t_1)<v<f(t_2), |\nabla v|>0\}} (2-p)\frac{|\nabla v|}{|\nabla v|_\e^2}|v_{\nu\nu}|\\
\le &(2-p)\e^{-1}\int_{\{f(t_1)<v<f(t_2), |\nabla v|>0\}} \frac{|\nabla v|}{|\nabla v|_\e}|v_{\nu\nu}|\\
\le &\frac12(2-p)\e^{-1}\int_{\{f(t_1)<v<f(t_2), |\nabla v|>0\}} \frac{|\nabla v|}{|\nabla v|_\e} \lf(\frac1{|\nabla v|_\e}v_{\nu\nu}^2+|\nabla v|_\e\ri)\\
\le&C\e^{-1}
\end{split}
\eee
for some constant $C>0$ independent of $\e$. Here we have used the fact that $|\nabla v|_\e\ge\e$, and $|\nabla v|\to |\nabla u|$.
\end{proof}

We are ready to  prove Theorem \ref{t-intro-1}. Before we prove the theorem, let us fix some notation. Let $u$ be as in the theorem, then $|\nabla u|>0$ outside a compact set by the asymptotic behavior \eqref{e-behavior} of $u$. Let $0<t_1<t_2$ be such that such that $\Sigma(t_1)=\{u=f(t_1)\}, \Sigma(t_2)=\{u=f(t_2)\}$ are regular. Recall that $f(t)=1-ct^{-a}$, $a=(3-p)/(p-1)$, $1<p\le 2$.  Fix $T>t_2$ so that $\Sigma(T)$ is regular. For any $\e>0$, let $v_\e$ be the solution of \eqref{e-v-1}.   Let $f_\e(t)=1-c_\e t^{-a}$ and $c_\e $ be as in \eqref{e-capacity-1}. Let $D_\e(t)$ be as in \eqref{e-De} whenever $\Sigma(\e,t)=\{v_\e=f_\e(t)\}$ is regular. By \cite{AMMO2022}, for $\e>0$ small enough,   $\Sigma(\e,t_1), \Sigma(\e,t_2)$ are regular.

\begin{proof}[Proof of Theorem \ref{t-intro-1}] (i) With the above setting,  by Lemma \ref{l-monotone-d}, for $\e$ small enough, we have
 \bee
D_\e(t_2)-D_\e(t_1) \le \int_{\{f_\e(t_1)<v<f_\e(t_2)\}}E(\e).
\eee
where $E(\e)\ge0$ is uniformly bounded independent of $\e$ and converges to zero everywhere in $M(T)=\{0<u<f(T)\}$.
Let $\e\to0$, we conclude by Lemma \ref{l-approx}
$$
D(t_2)-D(t_1)\le 0.
$$

(ii) If $M$ is AF, then $\Sigma(t_2)$  is regular   for   $t_2>>1$ and $D(t_2)\to 0$ as $t_2\to\infty$ by the proof of Proposition \ref{t-section-2}. By (i) we have $D(t)\ge 0$
whenever $\Sigma(t)$ is regular. Since
$$ D =(2a+1) \cB- a \cA, $$
we conclude that (ii) is true.

To prove (iii), recall
$$
\cB_\e(t)=4\pi t-(c_\e a)^{-2}t^{2a+1}\int_{\Sigma(\e,t)}|\nabla v_\e|^2
$$
whenever $\Sigma(\e,t)$ is regular.
\bee
\begin{split}
\cB_\e(t)
=&c^\frac1a_\e\int_{\Sigma(\e, t)}(c_\e a)^{1-p} (1-v_\e)^{-\frac1a} |\nabla v_\e|_\e^{p-2}|\nabla v_\e|- a^{-2} (1-v_\e)^{-(2+\frac1a)}  |\nabla v_\e|^2\\
=&c^\frac1a\int_{\Sigma(\e,t)}Q_\e
\end{split}
\eee
Try to find $X_\e$ so that at $\Sigma(\e,t)$, $\la X_\e,\frac{\nabla v_\e}{|\nabla v|_\e}\ra=Q_\e.$ So let
\be\label{e-X}
X_\e=(c_\e a)^{1-p} (1-v_\e)^{-\frac1a} |\nabla v_\e|_\e^{p-2}\nabla v_\e-a^{-2}  (1-v_\e)^{-(2+\frac1a)} |\nabla v_\e|\,\nabla v_\e=:Z_\e-Y_\e.
\ee

For any $\delta>0$, let
$$Y_{\e,\delta}=a^{-2}  (1-v_\e)^{-(2+\frac1a)} \sqrt{|\nabla v_\e|^2+\delta^2}\nabla v_\e.$$
Then $Z_\e-Y_{\delta,\e}$ is a smooth vector field. Since $\Sigma(t_1), \Sigma(t_2)$ are regular, we can conclude that $\Sigma(\e,t_1), \Sigma(\e,t_2)$ are regular for small $\e$ and so:

\be\label{e-Be-1}
c^\frac1a_\e\lim_{\delta\to 0}\int_{\{f_\e(t_1)<v_\e<f_\e(t_2)\}}\div(Z_\e-Y_{\delta,\e})= \cB_\e(t_2) - \cB_\e(t_1).
\ee

Now
\be\label{e-div-Z}
\begin{split}
\div \, Z_\e
=&(c_\e a)^{1-p}\frac1a(1-v_\e)^{-(\frac1a+1)}|\nabla v_\e|_\e^{p-2}|\nabla v_\e|^2,
\end{split}
\ee
which is zero if $|\nabla v_\e|=0$.
On the other hand, if $|\nabla v_\e|=0$, then
\bee
\div\, Y_{\delta,\e}=a^{-2} \delta  (1-v_\e)^{-(2+\frac1a)} \Delta v_\e.
\eee
If $|\nabla v_\e|>0$, then

\bee
\begin{split}
\div\, Y_{\delta,\e}=&a^{-2}   (1-v_\e)^{-(2+\frac1a)}\bigg[ \sqrt{|\nabla v_\e|^2+\delta^2}\Delta v_\e+  \frac{|\nabla v_\e|\la\nabla |\nabla v_\e| ,\nabla v_\e\ra}{\sqrt{|\nabla v_\e|^2+\delta^2}}\\
&+ (2+\frac1a)(1-v_\e)^{-1}\sqrt{|\nabla v_\e|^2+\delta^2}|\nabla v_\e|^2\bigg]\\
\end{split}
\eee
Note that for $|\nabla v_\e|> 0$, $\la\nabla |\nabla v_\e| ,\nabla v_\e\ra=|\nabla v_\e|(v_\e)_{\nu\nu}$.
Since $v_\e$ is smooth, one can see that $\div Z_\e$ is bounded and $\div\, Y_{\delta,\e}$ is uniformly bounded independent of $\delta$. Moreover, $\div\, Y_{\delta,\e}\to \div\, Y_\e$ if $|\nabla v_\e|> 0$ as $\delta\to0$ and $\div\, Y_{\delta,\e}\to 0$ if $|\nabla v_\e|=0$. By Lebesgue's dominated convergence theorem, let $\delta\to0$ we have:
\bee
c^{-\frac1a}( \cB_\e(t_2) - \cB_\e(t_1))= \int_{\{f_\e(t_1)<v_\e<f_\e(t_2)\}}\mathbf{1}_{\{|\nabla v_\e|>0\}} \div(Z_\e-Y_\e).
\eee
At $|\nabla v_\e|>0$,
\be\label{e-ZY}
\begin{split}
&|\nabla v_\e|^{-1}\div(Z_\e-Y_\e)\\
=&a^{-1}(1-v_\e)^{-(\frac1a+2)}\bigg[  \frac{4\pi}{C_{p,\e}}(1-v_\e)|\nabla v_\e|_\e^{p-2}|\nabla v_\e|
-a^{-1}\lf( \Delta v_\e+(v_\e)_{\nu\nu} \ri)
\\&- a^{-2}(1+2a) (1-v_\e)^{-1}  |\nabla v_\e|^2\bigg].
\end{split}
\ee
Since
$1-v_\e\ge C>0$ for $f_\e(t_1) <v_\e<f_\e(t_1)<1$   for some $C>0$,
and $v_\e$ is smooth,
  $|\nabla v_\e|^{-1}{\bf 1}_{\{|\nabla v_\e|>0\}}\div(Z_\e-Y_\e)$ is in $L^1$. Let $\tau=f_\e(t)$, $\tau_1=f_\e(t_1), \tau_2=f_\e(t_2)$ and let    $R$ be the set of regular values of $v_\e$. By the co-area formula and the Morse-Sard's theorem, using \eqref{e-De}, \eqref{e-ZY} and the fact that $\tau=f_\e(t)$,

\bee
\begin{split}
 &c^{-\frac1a}_\e( \cB_\e(t_2) - \cB_\e(t_1))\\
 =& \int_{\{f_\e(t_1)<v_\e<f_\e(t_2)\}} {\bf 1}_{\{|\nabla v_\e|>0\})}\div(Z_\e-Y_\e){\bf 1}_{\{|\nabla v_\e|>0\})}\\
 =&\int_{\tau_1}^{\tau_2}\int_{\{v_\e=\tau\}} |\nabla v_\e|^{-1}{\bf 1}_{\{|\nabla v_\e|>0\})}\div(Z_\e-Y_\e) d\tau\\
 =&\int_{\{\tau_1<\tau<\tau_2, \tau\in R\}}\int_{\{v_\e=\tau\}} |\nabla v_\e|^{-1}\div(Z_\e-Y_\e) d\tau\\
 =&c_\e a^{-1}\int_{\{\tau_1<\tau<\tau_2, \tau\in R\}}(1-\tau)^{-\lf(\frac1a+2\ri)}\\
 &\cdot\bigg[c_\e D_\e (t(\tau))-a^{-1}\lf( \Delta v_\e+(v_\e)_{\nu\nu} \ri)-\Delta v_\e-(v_\e)_{\nu\nu}\bigg] d\tau.
\end{split}
\eee
On the other hand,
\bee
\begin{split}
 a^{-1}&\lf( \Delta v_\e+(v_\e)_{\nu\nu} \ri)+\Delta v_\e-(v_\e)_{\nu\nu}\\
  =&\lf[\frac{p-1}{3-p}\frac{(3-p)|\nabla v_\e|^2+\e^2}{|\nabla v_\e|_\e^2}+\frac{(1-p)|\nabla v|^2-\e^2}{|\nabla v_\e|_\e^2}\ri](v_\e)_{\nu\nu}\\
  =&\frac{2(p-2)\e^2}{3-p}\frac{\Delta v_\e}{|\nabla v_\e|^2}.
\end{split}
\eee
Hence
\bee
\begin{split}
&c^{-\frac1a}( \cB_\e(t_2) - \cB_\e(t_1))\\
=&\int_{\tau_1<\tau<\tau_2, \tau\in R}a^{-1}c_\e(1-\tau)^{-2-\frac1a}D_\e(\tau) d\tau
\\
&+\int_{\{\tau_1<\tau<\tau_2, \tau\in R\}}a^{-1}(1-\tau)^{-2-\frac1a}\int_{\Sigma_{\e,\tau}} \frac{2(p-2)\e^2}{3-p}\frac{\Delta v_\e}{|\nabla v_\e|^2} d\tau \\
=&\int_{\{\tau_1<\tau<\tau_2, \tau\in R\}}a^{-1}c_\e(1-\tau)^{-2-\frac1a}D_\e(\tau) d\tau
\\
&+\int_{\{\tau_1<v_\e<\tau_2\}} a^{-1} \mathbf{1}_{\{|\nabla v_\e|>0\}}(1-v_\e)^{-2-\frac1a} \frac{2(p-2)\e^2}{3-p}\frac{\Delta v_\e}{|\nabla v_\e|} d\tau\\
=&(1)+(2).
\end{split}
\eee
By Lemma \ref{l-monotone-d}(ii),
$$
(2)\ge -C_1\e
$$
for some constant $C_1>0$ independent of $\e$. By Lemma \ref{l-monotone-d}(i),
\bee
(1)\ge C_2 D_\e(t_2)+o(1)
\eee
as $\e\to 0$ for some constant $C_2>0$ independent of $\e$. Let $\e\to 0$, we conclude that

as $\e\to0$. Let $\e\to0$ by Lemma \ref{l-approx} we have
$$
c^{-\frac1a}( \cB(t_2) - \cB(t_1))\ge C_2D(t_2)\ge0,
$$
by part (ii).

In \cite{AMMO2022}, it was shown that $ F( t_2)\ge F(t_1)  $ . Since $F$, $\cA$ and $\cB$ are related by
$F (t) = \cA (t) - \cB(t)$,
the monotonicity of $\cA (t_2)\ge \cA(t_1)$.

This finishes the proof of Theorem  \ref{t-intro-1}.
\end{proof}

\section{Asymptotical behavior}\label{s-asy}
Let $(M^3,g)$, $\cA(t) $ and $ \cB(t)$ be given as in Theorem \ref{t-intro-1}. By \eqref{e-behavior}, the level set
$\Sigma(t) = \{ u = f (t) \}$ is regular when $t$ is large. By Theorem \ref{t-intro-1}, $\cA(t)$ and $ \cB(t)$ are nondecreasing in $t$ for those $t$ with $\Sigma(t)$ being regular.

If $p=2$,  it is known, as $x \to \infty$,  the harmonic function $u$ has an asymptotic expansion
\be \label{eq-u-1}
u = 1 -  c | x |^{-1} + O_2 ( | x |^{-1 - \sigma} ) ,
\ee
see \cite[Lemma A.2]{MMT18} for instance.
Here $ c $ is a positive constant and $ \sigma \in (\frac12, 1)$ is a decay rate of $g_{ij}$ in \eqref{e-AF-1}.
With the help of \eqref{eq-u-1}, it was shown in \cite{Miao2022} that $\cA(t)$ converges to $12\pi \mathfrak{m}$ and
$ \cB(t)$ converges to $4\pi \mathfrak{m}$, respectively, as $t\to \infty$.
(Note that $\cA $ and $\cB$ here differ with those in \cite{Miao2022} by a  factor of $\frac{1}{4\pi} C_2$.)

If $ p \in (1,3) $ and $p \ne 2$, unlike harmonic functions,  near infinity we only have
\be\label{e-u-asy}
u=1-cr^{-a}+o_2(r^{-a})
\ee
(to the authors' knowledge).
 {Note that this in particular implies that level-sets of $u$ are regular near $\infty$.}
Nevertheless, a Hawking mass estimate
\be \label{eq-Hmass-limit}
\limsup_{t \to \infty } \frac{t}{2} \left( 1 - \frac{1}{16 \pi} \int_{\Sigma(t)} H^2 \right) \le \m
\ee
was proved in  \cite[Lemma 2.5]{AMMO2022}.
\eqref{eq-Hmass-limit} can be viewed as a Hawking mass estimate because
the ratio between $t$ and the area-radius of $\{ \Sigma(t) \}$ tends to $1$ by \eqref{e-u-asy}.
From this, it was shown in \cite{AMMO2022} that
$$\limsup_{t\to\infty} F(t) \le 8\pi \mathfrak{m} . $$

As a corollary of \eqref{eq-Hmass-limit} proved in \cite{AMMO2022}, one has

\begin{prop}\label{t-asy}
\begin{enumerate}\
  \item [(i)] $\limsup_{t\to\infty} \cA(t)\le 4\pi(5-p)\mathfrak{m}$.
  \item [(ii)] $\limsup_{t\to\infty} \cB(t)\le 4\pi(3-p)\mathfrak{m}$.
\end{enumerate}
\end{prop}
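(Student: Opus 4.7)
Since $\cA(t) = \cB(t) + F(t)$ and the bound $\limsup_{t\to\infty}F(t) \le 8\pi\m$ has just been recalled from \cite{AMMO2022}, part (i) follows at once from part (ii):
\bee
\limsup_{t \to \infty}\cA(t) \le \limsup_{t \to \infty}\cB(t) + \limsup_{t \to \infty}F(t) \le 4\pi(3-p)\m + 8\pi\m = 4\pi(5-p)\m.
\eee
The work is therefore to establish (ii).

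To that end, introduce the auxiliary function $\rho(x)=c^{1/a}(1-u(x))^{-1/a}$. Then $\Sigma(t)=\{\rho=t\}$ and, on $\Sigma(t)$, a direct computation gives $|\nabla\rho| = (ca)^{-1}t^{a+1}|\nabla u|$; hence
\bee
\cB(t) = 4\pi t - t^{-1}\int_{\Sigma(t)}|\nabla\rho|^2 .
\eee
Moreover, the $p$-capacity identity $\int_{\Sigma(t)}|\nabla u|^{p-1}=C_p$, combined with $C_p = 4\pi(ca)^{p-1}$ and $(a+1)(p-1)=2$, upgrades to the exact identity $\int_{\Sigma(t)}|\nabla\rho|^{p-1} = 4\pi t^2$. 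Applying Jensen's inequality to the concave function $x\mapsto x^{(p-1)/2}$ (concave since $(p-1)/2\in(0,1/2]$ for $1<p\le 2$) with $x=|\nabla\rho|^2$ yields
\bee
4\pi t^2 = \int_{\Sigma(t)}|\nabla\rho|^{p-1} \le |\Sigma(t)|^{(3-p)/2}\lf(\int_{\Sigma(t)}|\nabla\rho|^2\ri)^{(p-1)/2},
\eee
i.e.\ $\int_{\Sigma(t)}|\nabla\rho|^2 \ge 4\pi t^2\lf(4\pi t^2/|\Sigma(t)|\ri)^a$, whence
\bee
\cB(t)\le 4\pi t\lf[1-\lf(\frac{4\pi t^2}{|\Sigma(t)|}\ri)^a\ri].
\eee
Writing $|\Sigma(t)|=4\pi t^2(1+\beta(t))$ with $\beta(t)\to 0$ and Taylor-expanding, $\cB(t) \le 4\pi a t\beta(t) + O(t\beta(t)^2)$. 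Using $a(p-1)=3-p$, (ii) reduces to the sharp area estimate
\bee
|\Sigma(t)| \le 4\pi t^2 + 4\pi(p-1)\m\, t + o(t) .
\eee

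The main obstacle is precisely this sharp upper bound on $|\Sigma(t)|$. Under the weak decay hypothesis $\sigma>1/2$ in \eqref{e-AF-1}, the closeness of $\Sigma(t)$ to the coordinate sphere $S_t$ coming from \eqref{e-u-asy} alone only yields $|\Sigma(t)| = 4\pi t^2 + O(t^{2-\sigma})$, which is inadequate; the fluctuations from the metric asymptotics dominate the $O(t)$ correction we need. Pinning down the correct coefficient of the $O(t)$ term should require combining the Hawking estimate \eqref{eq-Hmass-limit}, the ADM mass formula \eqref{eq-H-A-m} transplanted from $S_r$ to the nearby level set $\Sigma(t)$, and the already-established $F(t) \le 8\pi\m + o(1)$, plausibly via a Cauchy--Schwarz argument relating $(\int_{\Sigma(t)}H)^2$, $|\Sigma(t)|$ and $\int_{\Sigma(t)}H^2$. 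This is the delicate step of the argument; note that the bound is saturated on Schwarzschild, where $|\Sigma(t)| = 4\pi t^2 + 4\pi(p-1)\m t + O(1)$ by direct computation.
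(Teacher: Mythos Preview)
Your reduction of (i) to (ii) via $\cA=\cB+F$ and the known bound $\limsup_{t\to\infty}F(t)\le 8\pi\m$ is correct and clean. The problem is that your argument for (ii) is not a proof: you reduce it to the sharp one-sided area estimate
\bee
|\Sigma(t)|\le 4\pi t^2+4\pi(p-1)\m\,t+o(t),
\eee
and then explicitly stop, calling this ``the main obstacle'' and ``the delicate step.'' You never establish it. Under the hypothesis $\sigma>\tfrac12$ and with only \eqref{e-u-asy} available, this area bound is genuinely nontrivial; your own diagnosis that the naive comparison of $\Sigma(t)$ with $S_t$ gives only $|\Sigma(t)|=4\pi t^2+O(t^{2-\sigma})$ is exactly why. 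The suggestions you list (transplanting \eqref{eq-H-A-m}, combining with \eqref{eq-Hmass-limit} and $F\le 8\pi\m+o(1)$ via Cauchy--Schwarz) are plausible heuristics but not an argument, and it is not clear they close the gap.

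The paper avoids this obstacle entirely by a different, more direct route. It never estimates $|\Sigma(t)|$. Instead it uses the level-set evolution formulas (Lemma \ref{l-basic-2}) together with \eqref{eq-Hmass-limit} and proceeds by two successive integrations in $\tau$: first,
\bee
\frac{d}{d\tau}\int_{\Sigma_\tau}|\nabla u|H\ \le\ 4\pi-\frac{5-p}{4(p-1)}\int_{\Sigma_\tau}H^2,
\eee
and feeding in the Hawking-mass bound $-\int_{\Sigma_\tau}H^2\le 32\pi c^{-1/a}(1-\tau)^{1/a}\tilde\m-16\pi$ (for any $\tilde\m>\m$) gives, upon integrating from $\tau$ to $1$, an explicit upper bound on $-\int_{\Sigma_\tau}|\nabla u|H$ which already yields (i). Second, since $\frac{d}{d\tau}\int_{\Sigma_\tau}|\nabla u|^2=-a\int_{\Sigma_\tau}|\nabla u|H$, one integrates the bound just obtained once more to control $-\int_{\Sigma_\tau}|\nabla u|^2$, which gives (ii). In short: the paper integrates the Hawking-mass estimate twice through the evolution identities, rather than converting everything into an area statement via Jensen.
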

\begin{proof} Let $c$ be as in the proof of Theorem \ref{t-intro-1}. Namely, for $1<p<3$,
$$
ca=\lf(\frac{C_p}{4\pi}\ri)^\frac1{p-1}
$$
where $a=(3-p)/(p-1)$ and $C_p$ is the $p$-capacity of $\p M$.  Let $\tau=f(t)$. Then
\bee
t=(\frac c{1-\tau})^\frac1a.
\eee
Let $\Sigma_\tau=\{u=\tau\}$. Suppose $\tau$ is a regular value of $u$. By Lemma \ref{l-basic-2},
\be \label{eq-dguH}
\begin{split}
\frac{d}{d\tau}\int_{\Sigma_\tau}|\nabla u|H=& \int_{\Sigma_\tau} \lf( K- \frac 3{4}H^2 +H\frac{\Delta u}{|\nabla u|}\ri) -E(\tau)\\
\le & 4\pi -\int_{\Sigma_\tau} \frac{5-p}{4(p-1)}H^2
\end{split}
\ee
where
\bee
E(\tau)=\int_{\Sigma_\tau} \lf(\frac{|\wt\nabla |\nabla u|^2|}{|\nabla u|^2} +\frac12(\cS+|\mathring A|^2)\ri) \ge 0  .
\eee

Given any $\tilde \m > \m$, by \eqref{eq-Hmass-limit},
\be \label{eq-mhmass-bd}
-\int_{\Sigma_\tau}H^2\le 32\pi c^{-\frac1a} (1-\tau)^{\frac1a}  \tilde {\m} -16\pi
\ee
for $\tau$ close to $1$.
By \eqref{e-u-asy},  $\int_{\Sigma_\tau}|\nabla u|H\to 0$ as  $\tau \to 1$.
Integrating  \eqref{eq-dguH} and using \eqref{eq-mhmass-bd}, we have
\be\label{e-uH}
\begin{split}
-\int_{\Sigma_\tau}|\nabla u|H \le & \  -8\pi a(1- \tau) \\
& \ +8\pi \cdot \frac{5-p}{p-1} c^{-\frac1a}\frac a{1+a}(1-\tau)^{1+\frac1a} \tilde \m .
\end{split}
\ee
Therefore,
\bee
\begin{split}
\cA(t)=  \cA(t(\tau))
=& \ (\frac c{1-\tau})^\frac1a \lf(8\pi-  \frac 1{a(1-\tau)}\int_{\Sigma_\tau}|\nabla u|H \ri)  \\
\le & \  8\pi \frac{5-p}{p-1} \frac 1{1+a} \tilde \m \\
= & \ 4\pi(5-p) \tilde \m .
\end{split}
\eee
As $\tilde \m > \m $ is arbitrary, from this (i) follows.

To show (ii), by \eqref{e-uH},
\bee
\begin{split}
\frac{d}{d\tau}\int_{\Sigma_\tau}|\nabla u|^2=& -a\int_{u=\tau}|\nabla u|H \\
\le &a\bigg[-8\pi a(1- \tau)+8\pi \cdot \frac{5-p}{p-1} c^{-\frac1a}\frac a{1+a}(1-\tau)^{1+\frac1a}
\tilde \m  \bigg] .
\end{split}
\eee
By \eqref{e-u-asy} and the fact $|\Sigma(t)|=O(t^2)$, we have $\int_{\Sigma_\tau} |\nabla u|^2\to0$ as $\tau\to1$. Hence,
\bee
\begin{split}
-\int_{\Sigma_\tau}|\nabla u|^2\le & \ -4\pi a^2(1-\tau)^2 \\
& \
+8\pi \cdot \frac{5-p}{p-1} c^{-\frac1a} \frac {a^3}{(1+a)(1+2a)}  (1-\tau)^{2+\frac1a} \tilde \m .
\end{split}
\eee
This implies
\bee
\begin{split}
\cB(t) = \cB(t(\tau))
= & \ (\frac{c}{1-\tau})^{\frac1a}\lf(4\pi-a^{-2}(1-\tau)^{-2}\int_{\Sigma_\tau}|\nabla u|^2 \ri)\\
 \le & \ 4\pi(3-p) \tilde \m .
\end{split}
\eee
From this (ii) follows.
\end{proof}

\section{Applications}

We give applications of results in the previous sections. First recall the definitions of some quantities. Consider a complete, orientable, asymptotically flat $3$-manifold  $(M^3,g)$ with smooth boundary $\p M$. For $1<p\le 2$, denote

\be\label{e-quantities}
\left\{
  \begin{array}{ll}
    C_p=\text{$p$-capacity of $\p M$ in $(M,g)$}; \\
    a = \frac{3-p}{p-1}\\
    c   =a^{-1} \left( \frac{C_p}{ 4\pi} \right)^\frac{1}{p-1}
  \end{array}
\right.
\ee

By Theorem \ref{t-intro-1} and Proposition \ref{t-asy}, we have:

\begin{thm}\label{t-ratio}
Let $(M^3,g)$ be a complete, orientable, asymptotically flat $3$-manifold with smooth boundary $\p M$.
Suppose $\p M$ is connected and $H_2(M,\p M)= 0$.
For $1<p\le 2$, let $u$ be the $p$-harmonic function on $  M $ with $u=0$ on $\p M $, and $u\to 1$ at infinity.
If $g$ has nonnegative scalar curvature, then
\bee
( 1 - s ) 4 \pi  +   \int_{ u = s }   | \nabla u | H -
a^{-2} ( 1 + 2a)  ( 1 - s )^{- 1} \int_{ u = s   } | \nabla u |^2   \ge 0 ,
\eee
\bee
c^{\frac1a} (1- s )^{-\frac1a} \lf(8\pi-  \frac 1{a(1-s )}\int_{ u = s  } |\nabla u|H \ri)\le 4\pi (5-p)\mathfrak{m},
\eee
and
\bee
c^{\frac1a} ( 1- s)^{-\frac1a}\lf(4\pi-a^{-2}(1-s )^{-2}\int_{ u = s }|\nabla u|^2 \ri)
\le 4\pi (3-p)\mathfrak{m},
\eee
whenever $s$ is a regular value of $u$. In particular, at $\p M$,
\be \label{ineq-hu-gdu}
4 \pi  +  \int_{\p M }   | \nabla u | H \ge a^{-2} ( 1 + 2a) \int_{\p M} | \nabla u |^2   ,
\ee
\be \label{ineq-Hup}
c^{\frac1a}  \left(  8\pi- a^{-1} \int_{\p M}|\nabla u|H \right) \le 4\pi (5-p)\mathfrak{m},
\ee
and
\be \label{ineq-dup}
c^{-\frac1a} \left(  4\pi-a^{-2} \int_{\p M}|\nabla u|^2 \right)
\le 4\pi (3-p)\mathfrak{m}.
\ee
Here $H$ is the mean curvature of $\p M $ with respect to $\nu=\nabla u/|\nabla u|$.
Moreover, if equality holds in any of \eqref{ineq-hu-gdu} -- \eqref{ineq-dup}, then $(M,g)$ is isometric to $\R^3$ outside a round ball.
\end{thm}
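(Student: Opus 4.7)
The plan is to reduce Theorem~\ref{t-ratio} directly to Theorem~\ref{t-intro-1} and Proposition~\ref{t-asy}, via a purely algebraic repackaging. For a regular value $s \in [0,1)$ of $u$, set
\[
t_s = \left(\frac{c}{1-s}\right)^{1/a},
\]
so that $f(t_s) = s$ and $\Sigma(t_s) = \{u = s\}$. Using $c\,t_s^{-a} = 1-s$ and $t_s^a = c/(1-s)$, a direct computation from the definitions \eqref{e-AB-1}, \eqref{e-D} shows that the three displayed inequalities at level $s$ in the statement are equivalent, up to multiplication by positive constants depending only on $s$ and $c$, to
\[
D(t_s) \ge 0, \qquad \cA(t_s) \le 4\pi(5-p)\m, \qquad \cB(t_s) \le 4\pi(3-p)\m.
\]

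The first is precisely the global positivity in Theorem~\ref{t-intro-1}(ii). For the second and third, the global monotonicity in Theorem~\ref{t-intro-1}(iii) gives $\cA(t_s) \le \cA(t)$ and $\cB(t_s) \le \cB(t)$ for every regular $t \ge t_s$, while the expansion \eqref{e-u-asy} guarantees that $\Sigma(t)$ is regular for all sufficiently large $t$. Taking $t \to \infty$ and applying Proposition~\ref{t-asy} yields the claimed bounds. The boundary inequalities \eqref{ineq-hu-gdu}--\eqref{ineq-dup} are then the special case $s = 0$, i.e.\ $t_s = c^{1/a}$; here $\Sigma(t_s) = \p M$ is a regular level set thanks to the Hopf lemma recorded in Section~\ref{s-intro}.

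It remains to handle rigidity. Suppose equality holds in \eqref{ineq-Hup}, so $\cA(c^{1/a}) = 4\pi(5-p)\m$. Monotonicity combined with Proposition~\ref{t-asy}(i) forces $\cA(c^{1/a}) = \cA(t)$ for every regular $t > c^{1/a}$, and such $t$ exist by \eqref{e-u-asy}; the rigidity part of Theorem~\ref{t-intro-1}(iii) then identifies $\{u > 0\} = M \setminus \p M$ with $\R^3$ outside a round ball. The case of equality in \eqref{ineq-dup} is identical, with $\cB$ replacing $\cA$. For equality in \eqref{ineq-hu-gdu}, one has $D(c^{1/a}) = 0$; since $D$ is non-increasing in $t$ on regular values by Theorem~\ref{t-intro-1}(i) and globally non-negative by Theorem~\ref{t-intro-1}(ii), we obtain $D(t) \equiv 0$ on the range of regular $t \ge c^{1/a}$. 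Hence $\cB$ is constant there (as $\cB'(t) = t^a D(t)$), and we again invoke the rigidity in Theorem~\ref{t-intro-1}(iii).

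The bookkeeping is routine, and no new estimates are required beyond those already available. The only point that warrants a moment of care is the \emph{availability} of regular level sets both at $\p M$ (supplied by Hopf) and for arbitrarily large $t$ (supplied by the asymptotic expansion of $u$), so that the monotonicity and asymptotic limsup bounds of the preceding sections can be combined without obstruction.
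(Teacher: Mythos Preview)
Your proposal is correct and follows essentially the same route as the paper: the authors simply write ``By Theorem~\ref{t-intro-1} and Proposition~\ref{t-asy}, we have:'' and state the result, whereas you have carefully unpacked the algebraic substitution $t_s=(c/(1-s))^{1/a}$ and the logical chain (positivity of $D$, monotonicity of $\cA,\cB$, then the $\limsup$ bounds). One minor simplification: for the rigidity case of \eqref{ineq-hu-gdu} you can bypass the $\cB$-constancy detour and invoke Theorem~\ref{t-intro-1}(ii) directly, since its equality clause already asserts that $D(t)=0$ forces $\{u>f(t)\}$ to be Euclidean outside a round ball.
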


If $ p = 2$, Theorem \ref{t-ratio} reduces to inequalities in \cite[ Theorems 3.1, 3.2]{Miao2022}.
Similar to the $p=2 $ case, we note \eqref{ineq-hu-gdu} $+$ \eqref{ineq-dup} $ \Longrightarrow $ \eqref{ineq-Hup}.

If $H=0$, \eqref{ineq-Hup} reduces to $ 2  c^{\frac1a}(5-p)^{-1}   \le  \mathfrak{m}$, i.e.
\be \label{ineq-Hup-H-0}
\frac{2}{5-p}  \left( \frac{ 3 - p }{ p - 1 } \right)^\frac{ 1 -p}{3 - p} \left( \frac{ C_p}{ 4 \pi} \right)^\frac{1}{3-p} \le \m .
\ee
Similar to the applications  in \cite{AMMO2022}, by \cite[Theorem 1.2]{FM22} letting $p \to 1$ in \eqref{ineq-Hup-H-0}, one retrieves
the Riemannian Penrose inequality in the case that $\p M$ is a connected, outer minimizing  surface.

Next, we give a corollary of  \eqref{ineq-hu-gdu} and \eqref{ineq-dup}.

\begin{cor} \label{cor-mass-pc-Willmore}
Let $(M^3,g)$ be a complete, orientable, asymptotically flat $3$-manifold with smooth boundary $\p M$.
Suppose $\p M$ is connected and $H_2(M,\p M)= 0$.
Let $ W = \frac{1}{16 \pi} \int_{\p M} H^2 $, where $H$ is the mean curvature of $\p M$.
For $1<p\le 2$, if  $g$ has nonnegative scalar curvature, then
\be \label{eq-mass-cp-H}
1 \le a^{ \frac{1}{a} }   \left( \frac{ 4 \pi }{C_p} \right)^\frac{1}{3-p} (3-p) \mathfrak{m}
+ \frac{a^2}{ ( 1 + 2 a)^2} \left( \sqrt{W} + \sqrt{ W + \frac{ 1 + 2 a}{a^2}  } \right)^2 .
\ee
If equality holds, then $(M, g)$ is isometric to $\R^3$ minus a round ball.

As a result of \eqref{eq-mass-cp-H}, if $ \p M$ is area outer-minimizing in $(M,g)$, then
\be \label{eq-g-Hawkingmass-m}
\sqrt{ \frac{ | \p M | }{ 16 \pi } }  \left(
1 - \frac{1}{16 \pi} \int_{\p M} H^2 \right) \le   \mathfrak{m} .
\ee
\end{cor}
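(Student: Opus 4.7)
The plan is to derive \eqref{eq-mass-cp-H} by combining the two boundary inequalities \eqref{ineq-hu-gdu} and \eqref{ineq-dup} of Theorem \ref{t-ratio}, using Cauchy--Schwarz to eliminate the cross-term $\int_{\p M}|\nabla u|H$; the Hawking mass bound \eqref{eq-g-Hawkingmass-m} will then follow by sending $p \to 1^+$ and invoking \cite[Theorem 1.2]{FM22} on the limiting behavior of the $p$-capacity.

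Write $I_1 = \int_{\p M}|\nabla u|H$ and $I_2 = \int_{\p M}|\nabla u|^2$. By Cauchy--Schwarz,
\[
I_1 \le \left(\int_{\p M} H^2\right)^{1/2} I_2^{1/2} = 4\sqrt{\pi W}\, I_2^{1/2}.
\]
Substituting into \eqref{ineq-hu-gdu} yields a quadratic inequality $a^{-2}(1+2a)y^2 - 4\sqrt{\pi W}\,y - 4\pi \le 0$ in $y = I_2^{1/2}$, whose positive-root bound, after squaring, reads
\[
a^{-2} I_2 \le \frac{4\pi a^2}{(1+2a)^2}\left(\sqrt{W} + \sqrt{W + \tfrac{1+2a}{a^2}}\right)^2.
\]
I would then rewrite \eqref{ineq-dup} as $4\pi \le c^{-1/a}\cdot 4\pi(3-p)\m + a^{-2} I_2$, insert the bound on $a^{-2}I_2$ above, divide by $4\pi$, and identify $c^{-1/a} = a^{1/a}(4\pi/C_p)^{1/(3-p)}$ using $a(p-1) = 3-p$ together with the definition of $c$ in \eqref{e-quantities}. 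This produces \eqref{eq-mass-cp-H}. The equality case forces equality in Cauchy--Schwarz and in at least one of \eqref{ineq-hu-gdu}, \eqref{ineq-dup}; the rigidity statement in Theorem \ref{t-ratio} then identifies $(M,g)$ as $\R^3$ minus a round ball.

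For \eqref{eq-g-Hawkingmass-m} I send $p \to 1^+$ in \eqref{eq-mass-cp-H}. Since $a = (3-p)/(p-1)\to\infty$, the routine limits $a^{1/a}\to 1$, $(3-p)\to 2$, $a^2/(1+2a)^2 \to 1/4$, and $(1+2a)/a^2 \to 0$ collapse the Willmore term on the right-hand side to $W$ itself. The crucial non-routine input is the outer-minimizing hypothesis, through which \cite[Theorem 1.2]{FM22} provides $\lim_{p\to 1^+}(4\pi/C_p)^{1/(3-p)} = \sqrt{4\pi/|\p M|}$, a limit one can verify against the Euclidean model $\p M = S_{r_0} \subset \R^3$ where $C_p = 4\pi a^{p-1} r_0^{3-p}$. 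The resulting limiting inequality $1 \le 2\sqrt{4\pi/|\p M|}\,\m + W$ rearranges to \eqref{eq-g-Hawkingmass-m}.

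The main obstacle lies in this final limit: one must check that the outer-minimizing condition is the correct hypothesis for \cite[Theorem 1.2]{FM22} to yield the area of $\p M$ itself (rather than that of a strictly larger enclosing surface) as the limit of $(C_p/(4\pi))^{1/(3-p)}$. Once that capacity limit is in hand, the remaining steps are elementary algebra.
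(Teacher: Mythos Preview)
Your proposal is correct and follows essentially the same route as the paper: apply Cauchy--Schwarz (the paper calls it H\"older) to bound $\int_{\p M}|\nabla u|H$ in \eqref{ineq-hu-gdu}, solve the resulting quadratic to obtain the bound on $a^{-2}\int_{\p M}|\nabla u|^2$, feed this into \eqref{ineq-dup}, and then pass to $p\to1^+$ using \cite{FM22}. The only cosmetic difference is that the paper first records the limiting inequality with $\limsup_{p\to1}C_p$ (using boundedness of $\{C_p\}$ via a fixed test function) before invoking \cite{FM22} to identify the limit as $|\p M|$, whereas you invoke the capacity limit directly; your concern about needing the outer-minimizing hypothesis here is exactly right and matches the paper's use of it.
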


\begin{proof}
Let $u$ be the $p$-harmonic function on $  M $ with $u=0$ on $\p M $ and $u\to 1$ at infinity.
By \eqref{ineq-hu-gdu} and H\"{o}lder's inequality,
\bee
4 \pi  +  \sqrt{ 16 \pi W}  \left( \int_{\p M }   | \nabla u |^2  \right)^\frac12  \ge a^{-2} ( 1 + 2a) \int_{\p M} | \nabla u |^2  .
\eee
This implies
\be \label{eq-bound-int-gdu-2}
a^{-2} \int_{\p M }   | \nabla u |^2 \le
4 \pi \frac{a^2}{ ( 1 + 2 a)^2} \left( \sqrt{W} + \sqrt{ W + \frac{ 1 + 2 a}{a^2}  } \right)^2
\ee
by elementary reasons.
\eqref{eq-mass-cp-H} follows from \eqref{ineq-dup} and  \eqref{eq-bound-int-gdu-2}.

Letting $ p \to 1$ in \eqref{eq-mass-cp-H} and using the fact
 $ \{ C_p \}_{1 < p \le 2 }$ is bounded (which can be seen by
choosing a fixed test function in the variational definition of $C_p$),
we have
\be \label{eq-capacity-Hawking-mass}
\sqrt{ \frac{  \limsup_{ p \to 1} C_p}{ 16 \pi } }  \left(
1 - \frac{1}{16 \pi} \int_{\p M} H^2 \right) \le   \mathfrak{m} .
\ee
If $\p M$ is area outer-minimizing in $(M, g)$, it was shown in \cite{FM22} (also see \cite{AFM22, AMMO2022})
that $ \lim_{p \to 1} C_p  = | \p M | $.
Hence, \eqref{eq-g-Hawkingmass-m} follows from \eqref{eq-capacity-Hawking-mass}.
\end{proof}

\begin{rem}
For each fixed $ p$, \eqref{eq-mass-cp-H} implies the $3$-dimensional Riemannian positive mass theorem.
For instance, suppose $M$ is topologically $ \R^3$ and apply \eqref{eq-mass-cp-H} to the complement of a small
geodesic ball $B_r(x)$ with radius $r$ in $(M,g)$. Since $ C_p$ remains bounded and $W \to 1$, as $ r \to 0$, we see $ \m \ge 0$.
\end{rem}

{
\begin{rem}
In \cite{Moser2007}, Moser gave another proof of the
existence of weak inverse mean curvature flow (IMCF) by constructing $p$-harmonic functions for $p>1$ and letting $p\to 1$.
In \cite{Xiao2016}, Xiao used IMCF to obtain estimates on $p$-capacity
and commented on their limit as $p \to 1$.
We do not use IMCF in this work.
Our approach is  along the same line as in \cite{AMMO2022}. We would like to thank Jie Xiao for bring our
attention to Remark 1.2 in the work [41].
\end{rem}
}

\begin{thm} \label{thm-sec-pmt-bdry}
Let $(M, g)$ be a complete, orientable, asymptotically flat $3$-manifold
with nonnegative scalar curvature, with smooth boundary $\p M $.
Suppose $ \p M $ is connected and $H_2 (M, \p M ) = 0$.
Let $ H_{\max} $ denote the maximum of the mean curvature $H$ of $\p M$.
Suppose $ H_{\max} \ge 0 $.
Then for $1<p\le 2$,
\be \label{eq-mass-H-c}
\begin{split}
2 \le & \      a^{\frac{1}{a}} \left( \frac{ 4 \pi }{ C_p} \right)^\frac{1}{3- p}        (5-p)\mathfrak{m} \\
 & \ +  H_{\max}  a^{ \frac{1-p}{3-p} } \left( \frac{ C_p}{ 4 \pi} \right)^\frac{1}{3-p}
 \left[  \frac{  a \left( \sqrt{W} + \sqrt{ W + \frac{ 1 + 2 a}{a^2}  } \right) }{ ( 1 + 2 a)} \right]^\frac{ 2 ( 2-p) }{3-p} .
\end{split}
\ee
Consequently,  if $ \Omega \subset M$ is a bounded region separating $\p M $
and $\infty$, meaning that
$ \p \Omega $ has two connected components $S_0$ and $S_1$, where
$S_0$ encloses $\p M $ (and is allowed to
coincide with $\p M $) and $S_1$ encloses $S_0$, then
$$ H_{\max}
 \left[  \frac{  a \left( \sqrt{W} + \sqrt{ W + \frac{ 1 + 2 a}{a^2}  } \right) }{ ( 1 + 2 a)} \right]^\frac{ 2 ( 2-p ) }{3-p}
 \le 2 \left( \frac{ 4 \pi }{ C_p (\Omega) }  \right)^\frac{1}{3-p}  a^{ \frac{p-1}{3-p} } \
 \Longrightarrow \m > 0 . $$
\end{thm}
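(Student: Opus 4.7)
The plan is to derive \eqref{eq-mass-H-c} from \eqref{ineq-Hup} of Theorem \ref{t-ratio} by bounding the mean-curvature integral pointwise and then applying H\"older's inequality. Starting from
\begin{equation*}
c^{1/a}\Bigl(8\pi - a^{-1}\int_{\partial M}|\nabla u|\, H\Bigr) \le 4\pi(5-p)\mathfrak{m},
\end{equation*}
the pointwise estimate $H \le H_{\max}$ combined with $|\nabla u|\ge 0$ yields $\int_{\partial M}|\nabla u|H \le H_{\max}\int_{\partial M}|\nabla u|$. To control the remaining $L^1$-norm, I would write $|\nabla u| = (|\nabla u|^{p-1})^{\theta}(|\nabla u|^{2})^{1-\theta}$ with $\theta = 1/(3-p)$, noting $1-\theta = (2-p)/(3-p)$, which via H\"older's inequality produces
\begin{equation*}
\int_{\partial M}|\nabla u| \le \Bigl(\int_{\partial M}|\nabla u|^{p-1}\Bigr)^{1/(3-p)}\Bigl(\int_{\partial M}|\nabla u|^{2}\Bigr)^{(2-p)/(3-p)}.
\end{equation*}
The first factor equals $C_p^{1/(3-p)}$ because $\partial M$ is a regular level set of $u$ by the Hopf lemma, so $C_p = \int_{\partial M}|\nabla u|^{p-1}$, and the second factor is controlled by the quadratic bound \eqref{eq-bound-int-gdu-2} already established in the proof of Corollary \ref{cor-mass-pc-Willmore}.

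Substituting both estimates into \eqref{ineq-Hup}, dividing by $4\pi c^{1/a}$, and using the identities $c^{-1/a} = a^{1/a}(4\pi/C_p)^{1/(3-p)}$ (which follows from $c = a^{-1}(C_p/4\pi)^{1/(p-1)}$ together with $a(p-1) = 3-p$) and $2(2-p)/(3-p) - 1 = (1-p)/(3-p)$, one recovers \eqref{eq-mass-H-c} after collecting powers of $a$, $C_p/(4\pi)$, and $B := a(\sqrt{W} + \sqrt{W + (1+2a)/a^{2}})/(1+2a)$. The rigidity statement transfers from that of Theorem \ref{t-ratio}, since equality in \eqref{eq-mass-H-c} would force equality throughout the chain, in particular in \eqref{ineq-Hup}.

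For the consequence on separating regions, I would interpret $C_p(\Omega)$ as the $p$-capacity of the condenser $(S_0, S_1)$ in $\Omega$, namely the infimum of $\int_\Omega |\nabla \psi|^p$ over Lipschitz $\psi$ with $\psi = 1$ on $S_0$ and $\psi = 0$ on $S_1$. The key comparison $C_p \le C_p(\Omega)$ follows by extending any such $\psi$ to a test function $\tilde\psi$ for $C_p$: set $\tilde\psi \equiv 1$ on the bounded region enclosed by $S_0$ containing $\partial M$, $\tilde\psi = \psi$ on $\Omega$, and $\tilde\psi \equiv 0$ outside $S_1$. This extension is Lipschitz, admissible for $C_p$, and has identical $p$-energy to $\psi$. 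The inequality is moreover strict, since a minimizer of $C_p$ must be the $p$-harmonic function $1-u$, yet $1-u > 0$ throughout the interior of $M$ while any extension of the form $\tilde\psi$ vanishes outside $S_1$. Consequently, $(4\pi/C_p)^{1/(3-p)} > (4\pi/C_p(\Omega))^{1/(3-p)}$, so the hypothesized bound $H_{\max}B^{2(2-p)/(3-p)} \le 2a^{(p-1)/(3-p)}(4\pi/C_p(\Omega))^{1/(3-p)}$ produces a strict inequality when $C_p(\Omega)$ is replaced by $C_p$. Inserting this strict bound into \eqref{eq-mass-H-c} forces the mass term $a^{1/a}(4\pi/C_p)^{1/(3-p)}(5-p)\mathfrak{m}$ to be strictly positive, hence $\mathfrak{m} > 0$.

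The main obstacle I anticipate is the routine but fiddly bookkeeping of the exponents of $a$, $C_p/(4\pi)$, and $B$ when combining the H\"older estimate with the quadratic bound \eqref{eq-bound-int-gdu-2} and matching the precise form of the right-hand side of \eqref{eq-mass-H-c}. A second, more conceptual point is ensuring the strict inequality $C_p < C_p(\Omega)$, which is essential for obtaining the strict conclusion $\mathfrak{m} > 0$ from the non-strict hypothesis.
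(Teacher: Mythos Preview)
Your proposal is correct and follows essentially the same route as the paper: bound $\int_{\partial M}|\nabla u|H$ by $H_{\max}\int_{\partial M}|\nabla u|$, apply the same H\"older splitting $|\nabla u|=|\nabla u|^{\frac{p-1}{3-p}}|\nabla u|^{\frac{2(2-p)}{3-p}}$ to reach $C_p^{1/(3-p)}\bigl(\int_{\partial M}|\nabla u|^2\bigr)^{(2-p)/(3-p)}$, invoke \eqref{eq-bound-int-gdu-2}, and unwind the constants. For the separating-region consequence, both you and the paper use the variational comparison $C_p<C_p(\Omega)$; your justification of strictness via the positivity of $1-u$ in the interior is slightly more explicit than the paper's one-line appeal to the variational characterization, but the content is the same.
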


\begin{proof}
Since $ p \le 2$,
\bee
\begin{split}
\int_{\p M} | \nabla u | = & \  \int_{\p M} | \nabla u |^\frac{p-1}{3-p}  | \nabla u |^{ \frac{ 2 ( 2 -p)}{ 3 - p} } \\
\le & \ \left(  \int_{\p M} | \nabla u |^{p-1} \right)^\frac{1}{3-p} \left( \int_{\p M} | \nabla u |^2 \right)^\frac{ 2-p}{3-p} .
\end{split}
\eee
Recall the $p$-capacity of $\Sigma$ in $(M, g)$ is given by
$$ C_p = \int_{\p M  } | \nabla u |^{p-1} . $$
Thus,
\be \label{eq-est-gdu-l-1}
\int_{\p M} | \nabla u |
\le C_p^\frac{1}{3-p} \left( \int_{\p M} | \nabla u |^2 \right)^\frac{ 2-p}{3-p} .
\ee
As $ H_{\max} \ge 0 $, \eqref{eq-mass-H-c} follows from \eqref{ineq-Hup}, \eqref{eq-est-gdu-l-1} and \eqref{eq-bound-int-gdu-2}.

Suppose   $ \Omega \subset M$ is a bounded region separating $\p M $
and $\infty$. Let $u_{_\Omega}$ be the $p$-harmonic function with
$  u_{_\Omega} |_{S_0} = 0 $ and  $u_{_\Omega} |_{S_1 } = 1 . $
Let
$$  C_p (\Omega)  =  \int_{\Omega} | \nabla u_{_\Omega}  |^{p}
= \int_{S_0}  | \nabla u_{_\Omega} |^{p-1} . $$
Then $ C_p < C_p (\Omega)$ by the variational nature of the $p$-capacity.
Hence, \eqref{eq-mass-H-c} holds with $C_p$ replaced by $C_p (\Omega)$ and the inequality becomes strict.
This implies the rest claim.
\end{proof}

\begin{rem}
One can have a rough estimate of $C_p (\Omega)$ in terms of $\Vol(\Omega)$, the volume of $(\Omega, g)$,
and $L$, the distance between $S_0$ and $S_1$.
Let $f(x)$ be a test function so that $f=0$ on the region enclosed by $ S_0$ with $\p M $,
$f(x) = L^{-1} d (x) $ for $x $ outside $S_0$ with $ d (x) \le L$ and $f(x) = 1$ if $ d (x) \ge L$.
Here $d(x)$ denote the distance from $x$ to $S_0$.
Then
$$
 C_p (\Omega) \le \int_\Omega | \nabla f |^p \le  L^{-p} \,  \Vol (\Omega) .
$$
Thus, Theorem \ref{thm-sec-pmt-bdry} implies the following: If
\be \label{eq-localized-condition}
 H_{\max}
 \left[  \frac{  a \left( \sqrt{W} + \sqrt{ W + \frac{ 1 + 2 a}{a^2}  } \right) }{ ( 1 + 2 a)} \right]^\frac{ 2 ( 2-p ) }{3-p}
 \le 2 \left( \frac{ 4 \pi L^p }{ \Vol ( \Omega) }  \right)^\frac{1}{3-p}  a^{ \frac{p-1}{3-p} } ,
\ee
then $  \m > 0 $.
Note the right side of \eqref{eq-localized-condition} does not depend on $\p M$.
If $ p = 2$, \eqref{eq-localized-condition} becomes $ H_{\max} \le 8 \pi L^2 {\Vol (\Omega)}^{-1} $,
which is exactly the condition in \cite[Equation (5.2)]{Miao2022}.
\end{rem}

Recently, there are results on positive mass theorems on complete manifolds with arbitrary ends,
see \cite{LesourdUngerYau,LeeLesourdUger,Zhu} for instances. We apply Theorem \ref{t-ratio} to obtain a special case of
those results.

\begin{prop}\label{p-pmt}
Let $(M^3, g)$ be a complete noncompact manifold with nonnegative scalar curvature with two ends $E, \wt E$ such that $M$ is diffeomorphic to $\R^3\setminus\{0\}$ and such that $E$ is AF. Suppose there is a harmonic function $u$ such that $u\to 1$ at the infinity of $E$ and $u\to 0$ at the infinity of $\wt E$ and suppose the Ricci curvature of $M$ is bounded below. Then the ADM mass of $E$ satisfies:
$$
8\pi\m\ge C_2>0
$$
where $C_2$ is the capacity of $u$. That is $C_2=\int_{\{u=\tau\}}|\nabla u|$, where $0<\tau<1$ is any constant so that $\{u=\tau\}$ is regular.
\end{prop}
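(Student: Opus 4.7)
My plan is to exhaust the two-ended manifold $M$ by AF manifolds with compact boundary, apply inequality \eqref{ineq-dup} of Theorem~\ref{t-ratio} with $p=2$ to each, and pass to a limit. The essential ingredients are boundedness of $u$, the Cheng--Yau gradient estimate, and conservation of harmonic flux.

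Since $u$ approaches $0$ at $\tilde E$ and $1$ at $E$ and is continuous on $M$, it is bounded; the strong maximum principle then gives $0<u<1$ throughout $M$. Applied to the positive harmonic function $u$ on the complete manifold $M$ with Ricci curvature bounded from below, the Cheng--Yau gradient estimate yields a constant $C>0$ such that $|\nabla u|\le Cu$ on $M$; in particular $|\nabla u|\le Cs$ on the level set $\{u=s\}$. For a regular value $s\in(0,1)$ (available for a.e.\ $s$ by Sard), the level set $\Sigma_s:=\{u=s\}$ is a compact smooth hypersurface that separates $E$ from $\tilde E$, and standard arguments (any bounded component of $\Sigma_s$ would bound a region on which $u-s$ has fixed sign, violating the strong maximum principle) together with $M\cong\R^3\setminus\{0\}$ show $\Sigma_s$ is connected for every regular $s$. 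Set $M_s:=\{u\ge s\}$; this is a complete AF $3$-manifold with connected compact boundary $\Sigma_s$, single AF end $E$ (hence ADM mass $\m$), and $\cS\ge 0$, and it has the topology of the exterior of a ball, so $H_2(M_s,\Sigma_s)=0$. Theorem~\ref{t-ratio} therefore applies.

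The renormalized function $\tilde u:=(u-s)/(1-s)$ is harmonic on $M_s$, vanishes on $\Sigma_s$, and tends to $1$ at infinity. By conservation of harmonic flux on $M$, its $2$-capacity is
\[
C_2^{(s)}=\int_{\Sigma_s}|\nabla\tilde u|=\frac{1}{1-s}\int_{\Sigma_s}|\nabla u|=\frac{C_2}{1-s}\longrightarrow C_2 \qquad (s\to 0^+).
\]
Plugging $(M_s,g,\tilde u)$ into \eqref{ineq-dup} (with $p=2$, $a=1$, $c^{(s)}=C_2^{(s)}/(4\pi)$) gives
\[
\frac{C_2^{(s)}}{4\pi}\Big(4\pi-\int_{\Sigma_s}|\nabla\tilde u|^2\Big)\le 4\pi\,\m,
\]
and the Cheng--Yau bound yields
\[
\int_{\Sigma_s}|\nabla\tilde u|^2=\frac{1}{(1-s)^2}\int_{\Sigma_s}|\nabla u|^2\le \frac{Cs\,C_2}{(1-s)^2}\longrightarrow 0.
\]
Sending $s\to 0^+$ produces $C_2\le 4\pi\m$, which yields the desired bound $8\pi\m\ge C_2$ (with $C_2>0$ since $u$ is nonconstant, so that $\m>0$ falls out as a byproduct).

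The step I expect to be most delicate is the topological verification that $(M_s,\Sigma_s)$ really satisfies the hypotheses of Theorem~\ref{t-ratio} for a sequence of regular values $s\to 0^+$---i.e., that $\Sigma_s$ is connected and $H_2(M_s,\Sigma_s)=0$. This rests on $M\cong\R^3\setminus\{0\}$ together with the observation that regular level sets of $u$ cannot have components bounding subregions on which $u-s$ has a definite sign; once this is in hand, the Cheng--Yau estimate and the flux identity make the limit $s\to 0^+$ routine.
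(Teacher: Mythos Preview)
Your approach is essentially the same as the paper's: both apply inequality \eqref{ineq-dup} of Theorem~\ref{t-ratio} with $p=2$, control $\int_{\Sigma}|\nabla u|^2$ via the Cheng--Yau gradient estimate, and send the boundary off toward the $\tilde E$-end. The difference is in the exhaustion. The paper removes small Euclidean balls $B_0(\rho_i)$ (using the diffeomorphism $M\cong\R^3\setminus\{0\}$), takes the corresponding harmonic functions $v_i$, applies the level-set form of \eqref{ineq-dup} to each $v_i$ at a fixed $\tau$, and then passes to two limits ($i\to\infty$, then $\tau\to 0$). You instead cut along level sets $\Sigma_s=\{u=s\}$ of $u$ itself and take a single limit $s\to 0$, which is cleaner and lets you apply Cheng--Yau globally on the complete manifold $M$ rather than on balls. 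The price is the topological step you flagged: your assertion that $M_s$ ``has the topology of the exterior of a ball, so $H_2(M_s,\Sigma_s)=0$'' presumes $\Sigma_s$ is a sphere, which is not a priori clear; by excision one has $H_2(M_s,\Sigma_s)\cong H_1(\Omega_s)$ for the bounded region $\Omega_s\subset\R^3$ enclosed by $\Sigma_s$, and this can fail if $\Sigma_s$ has positive genus. The paper's choice of Euclidean spheres sidesteps this entirely. That said, in the proof of Theorem~\ref{t-ratio} the hypothesis $H_2(M,\partial M)=0$ is used only to guarantee that all regular level sets of the $p$-harmonic function are connected, and you have already verified this directly for every $\Sigma_{s'}$, $s'\in(0,1)$, from the ambient topology $M\cong\R^3\setminus\{0\}$ and the maximum principle. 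So your argument goes through once you invoke that observation rather than the black-box hypothesis.
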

\begin{proof} Assume that the origin $0$ corresponds to the infinity of $\wt E$. Let $\rho_i\downarrow0$ and let $v_i$ be the harmonic function such that $v_i=0$ at $\p B_0(\rho_i)$ and $v_i\to 1$ at infinity of $E$. Here $B_0(\rho)$ is the Euclidean ball of radius $\rho$. Then $v_i$ will converge to $u$ in $C^\infty$ norm on compact sets.

Fix $0<\tau<1$ so that $\tau$ is a regular value of $u$. Hence if $i$ is large enough, then $\tau$ is also a regular value of $v_i$.
By \cite{Miao2022} or Theorem \ref{t-ratio}  with $p=2$, we have
\bee
\begin{split}
8\pi \m\ge& \frac{C_2(i)}{4\pi}(1-\tau)^{-1}\lf( 4\pi-(1-\tau)^{-2}\int_{\{v_i=\tau\}}|\nabla v_i|^2\ri).\\
\end{split}
\eee
Here $C_2(i)$ is the $2$-capacity of $v_i$. Since $v_i\to u$, by the maximum principle, there is a compact set $Q$ in $M$ such that
$B_x(1)\subset Q$ for all $x\in \{v_i=\tau\}$. Here $B_x(1)$ is the geodesic ball centered as $x$ with  radius 1.  Since the Ricci curvature of $M$ is bounded from below, by the gradient estimate for positive harmonic functions by  Cheng-Yau \cite{ChengYau}, see also \cite[Theorem 6.1]{Li}, we conclude that there is a constant $\beta$ depending only on the lower bound of the Ricci curvature so that
$$
|\nabla v_i(x)|\le \beta v_i(x)=\beta\tau
$$
for all $x\in \{v_i=\tau\}$ for all $i$, if $i$ is large enough.
Hence we have:
\bee
\begin{split}
8\pi \m\ge& \frac{C_2(i)}{4\pi}(1-\tau)^{-1}\lf( 4\pi-\b\tau(1-\tau)^{-2}\int_{\{v_i=\tau\}}|\nabla v_i|\ri) \\
=&\frac{C_2(i)}{4\pi}(1-\tau)^{-1}\lf( 4\pi-\b C_2(i)\tau(1-\tau)^{-2} \ri).
\end{split}
\eee
Let $i\to\infty$, since $C_2(i)\to C_2$, we have
\bee
\begin{split}
8\pi \m\ge&\frac{C_2(i)}{4\pi}(1-\tau)^{-1}\lf( 4\pi-\b C_2 \tau(1-\tau)^{-2} \ri).
\end{split}
\eee
Let $\tau\to 0$, the result follows.
\end{proof}

\appendix
\section{Basic computations}\label{s-prelim}

Let $(M^n,g)$ be a Riemannian manifold and $u$ is a smooth function on $M$. Let $\Sigma_\tau=\{u=\tau\}$. Assume $|\nabla u|>0$ on $\Sigma_\tau$. We have the following:

\begin{lma}\label{l-basic-1}   Let $\nu=\frac{\nabla u}{|\nabla u|}$. Let $H$ be the mean curvature of $\Sigma_\tau$ with respect to $\nu$. Then

\bee
\left\{
  \begin{array}{ll}
    \frac{\p}{\p \tau}=\frac1{|\nabla u|}\nu=\frac{\nabla u}{|\nabla u|^2};\\
  H=\frac1{|\nabla u|}\lf(\Delta u-u_{\nu\nu} \ri);\\
\frac{\p}{\p \tau}|\nabla u|^2=2\lf(\Delta u-H|\nabla u|\ri);\\
\frac{\p}{\p \tau}d\sigma=\frac1{|\nabla u|}H d\sigma;\\
|\nabla u|\frac{\p}{\p \tau}H=-\frac12(\frac n{n-1}H^2-\cS^\tau)-\lf(|\nabla u|\wt\Delta \frac{1}{|\nabla u|}+\frac12(\cS+|\mathring A|^2)\ri);
\end{array}
\right.
\eee
where $u_{\nu\nu}=\nabla^2 u(\nu,\nu)$, $\nu=\nabla u/|\nabla u|$; $d\sigma$ is the area element of $\Sigma_\tau$; $\cS$ is the scalar curvature of $M$; $\cS^\tau$ is the scalar curvature of $\Sigma_\tau$; $\mathring A$ is the traceless part of the second fundamental form $A$ of $\Sigma_\tau$ with respect to $\nu$; and $\wt \Delta$ is the Laplacian operator of $\Sigma_\tau$ with respect to the induced metric.

Moreover, if $X, Y$ are tangential to $\Sigma_\tau$, then $\nabla^2u(X,Y)=|\nabla u|^2A(X,Y)$, $\nabla^2 u(X,\nu)=X(|\nabla u|)$.
\end{lma}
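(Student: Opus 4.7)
The proof plan is to organize the seven identities so that each one is an immediate consequence of either (a) properties of the flow generated by the vector field $V:=\nabla u/|\nabla u|^2$, or (b) the Gauss equation combined with the standard first/second variation of area. The first four formulas are essentially bookkeeping from the level-set flow; the Hessian splitting at the end is a one-line product rule; the mean-curvature evolution in item 5 is the only item that needs genuine computation.

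First I would observe that $V(u)=\langle\nabla u,\nabla u\rangle/|\nabla u|^2=1$, so the flow $\phi_s$ of $V$ sends $\Sigma_\tau$ diffeomorphically to $\Sigma_{\tau+s}$. This identifies $\partial/\partial\tau$ with $V=\nabla u/|\nabla u|^2$, giving item 1, and shows that $\Sigma_\tau$ moves with scalar normal velocity $v=\langle V,\nu\rangle=1/|\nabla u|$. For the Hessian formulas at the bottom, I would write $\nabla u=|\nabla u|\,\nu$ and differentiate:
\[
\nabla_X\nabla u \;=\; X(|\nabla u|)\,\nu+|\nabla u|\,\nabla_X\nu .
\]
Pairing with a tangent $Y$ kills the first term and leaves $|\nabla u|\,A(X,Y)$ (with the convention $A(X,Y)=\langle \nabla_X\nu,Y\rangle$); pairing with $\nu$ kills the second term, since $|\nu|^2\equiv 1$, and leaves $X(|\nabla u|)$. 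With the Hessian formula in hand, item 2 is just the trace decomposition: completing a tangential orthonormal frame $\{e_1,\dots,e_{n-1}\}$ by $e_n=\nu$ gives $\Delta u=u_{\nu\nu}+\sum_{i<n}\nabla^2 u(e_i,e_i)=u_{\nu\nu}+|\nabla u|H$. Item 3 follows by the directional derivative formula $\partial_\tau f=|\nabla u|^{-2}\langle\nabla f,\nabla u\rangle$, applied to $f=|\nabla u|^2$, which yields $2|\nabla u|^{-2}\nabla^2u(\nabla u,\nabla u)=2u_{\nu\nu}$, then rewritten via item 2. Item 4 is the first variation of area under normal velocity $v$, giving $\partial_\tau\,d\sigma=vH\,d\sigma=(H/|\nabla u|)\,d\sigma$.

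The only substantive step is item 5. My plan is to combine three ingredients: (i) the classical evolution of mean curvature under a normal-velocity deformation,
\[
\partial_\tau H \;=\; -\tilde\Delta v-v\bigl(|A|^2+\Ric(\nu,\nu)\bigr),
\]
(ii) the Gauss equation $\cS^\tau=\cS-2\Ric(\nu,\nu)+H^2-|A|^2$, and (iii) the traceless splitting $|A|^2=|\mathring A|^2+\tfrac{H^2}{n-1}$. Setting $v=1/|\nabla u|$ and multiplying by $|\nabla u|$ produces
\[
|\nabla u|\,\partial_\tau H \;=\; -|\nabla u|\,\tilde\Delta\tfrac{1}{|\nabla u|}-|A|^2-\Ric(\nu,\nu).
\]
Eliminating $\Ric(\nu,\nu)$ via Gauss and regrouping using (iii) separates out exactly the term $-\tfrac12\bigl(\tfrac{n}{n-1}H^2-\cS^\tau\bigr)$ and leaves the claimed remainder $-\bigl(|\nabla u|\tilde\Delta\tfrac{1}{|\nabla u|}+\tfrac12(\cS+|\mathring A|^2)\bigr)$.

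The main obstacle, if any, is ingredient (i) above: the general mean-curvature evolution formula is classical but is often invoked without proof. To stay self-contained, the cleanest route is actually to bypass (i) entirely and differentiate $H=(\Delta u-u_{\nu\nu})/|\nabla u|$ directly along $\partial/\partial\tau=\nabla u/|\nabla u|^2$, using the Bochner identity for $|\nabla u|$ and the Ricci commutation $\nabla_i\nabla_j\nabla_k u-\nabla_j\nabla_i\nabla_k u=R_{ij k}{}^\ell\nabla_\ell u$ to convert third derivatives into curvature; one then recovers the same final expression after invoking Gauss and the $|A|^2$-splitting as above. Either route is routine once set up; the only care needed is consistency of the sign convention for $A$ and $H$ (which here is fixed by $H|\nabla u|=\Delta u-u_{\nu\nu}$, equivalently $A(X,Y)=\langle\nabla_X\nu,Y\rangle$).
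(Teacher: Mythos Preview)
Your argument is correct and is exactly the standard derivation one would expect; the paper itself states this lemma in an appendix without proof, treating it as a list of well-known level-set identities, so there is nothing to compare against beyond confirming your computations are right.

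One small point worth flagging: your computation of the tangential Hessian gives $\nabla^2u(X,Y)=|\nabla u|\,A(X,Y)$, not $|\nabla u|^2\,A(X,Y)$ as printed in the statement. Your version is the correct one (and it is the version actually used elsewhere in the paper, e.g.\ in deriving $|\nabla^2u|^2-2|\nabla|\nabla u||^2=|\nabla u|^2|A|^2-u_{\nu\nu}^2$); the extra power of $|\nabla u|$ in the displayed lemma is a typo. It would be worth saying so explicitly rather than silently writing the corrected formula.
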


\begin{lma}\label{l-basic-2} Let $\Sigma_\tau$ be as in the previous lemma. Then
\bee
\frac{d}{d\tau}\int_{\Sigma_\tau}|\nabla u|H= \int_{\Sigma_\tau}  \lf( \frac12(\cS^\tau- \frac n{n-1}H^2) +H\frac{\Delta u}{|\nabla u|}\ri) -E(\tau)
\eee
where
\bee
E(\tau)=\int_{\Sigma_\tau} \lf(\frac{|\wt\nabla |\nabla u|^2|}{|\nabla u|^2} +\frac12(\cS+|\mathring A|^2)\ri).
\eee
and
\bee
\frac{d}{d\tau}\int_{\Sigma_\tau}|\nabla u|^2 =\int_{\Sigma_\tau}\lf(2 \Delta u-|\nabla u|H\ri).
\eee
\end{lma}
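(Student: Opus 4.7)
The plan is to differentiate the two integrals by combining the evolution formulas for $|\nabla u|$, $H$, and the area element $d\sigma$ already supplied in Lemma \ref{l-basic-1}. For any function $F$ smooth near $\Sigma_\tau$, the general identity
\[
\frac{d}{d\tau}\int_{\Sigma_\tau} F\,d\sigma
= \int_{\Sigma_\tau}\left(\frac{\partial F}{\partial\tau} + \frac{H}{|\nabla u|}F\right)d\sigma
\]
follows from $\partial_\tau d\sigma = \frac{H}{|\nabla u|}d\sigma$. So the two identities reduce to identifying $\partial_\tau F$ with the data given in Lemma \ref{l-basic-1} and integrating by parts once on the (closed) level set.

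For the second identity I would take $F = |\nabla u|^2$ and simply plug in $\partial_\tau |\nabla u|^2 = 2(\Delta u - H|\nabla u|)$. The extra contribution $\frac{H}{|\nabla u|}|\nabla u|^2 = H|\nabla u|$ from the area element cancels one of the two copies of $H|\nabla u|$, producing $2\Delta u - H|\nabla u|$ as the integrand, which matches the claim directly.

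For the first identity I would take $F = |\nabla u|H$. Using $\partial_\tau |\nabla u| = \frac{\Delta u}{|\nabla u|} - H$ (a direct consequence of $\partial_\tau|\nabla u|^2 = 2(\Delta u - H|\nabla u|)$) and the given evolution
\[
|\nabla u|\,\partial_\tau H = -\tfrac12\!\left(\tfrac{n}{n-1}H^2 - \mathcal{S}^\tau\right) - \left(|\nabla u|\tilde\Delta \tfrac{1}{|\nabla u|} + \tfrac12(\mathcal{S}+|\mathring A|^2)\right),
\]
the pointwise integrand in $\int_{\Sigma_\tau}(\partial_\tau F + \frac{H}{|\nabla u|}F)\,d\sigma$ becomes
\[
H\tfrac{\Delta u}{|\nabla u|} + \tfrac12\!\left(\mathcal{S}^\tau-\tfrac{n}{n-1}H^2\right) - |\nabla u|\tilde\Delta\tfrac{1}{|\nabla u|} - \tfrac12(\mathcal{S}+|\mathring A|^2),
\]
after the two stray $H^2$ contributions cancel.

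The main (only) subtlety I expect is the integration by parts converting the nonlocal-looking term $\int_{\Sigma_\tau}|\nabla u|\tilde\Delta\tfrac{1}{|\nabla u|}$ into the manifestly nonnegative $\int_{\Sigma_\tau} |\tilde\nabla|\nabla u||^2 / |\nabla u|^2$. Since $\Sigma_\tau$ is a regular level set of $u$, it is a closed surface in the setting of interest, so integration by parts produces no boundary terms; writing $\tilde\nabla \tfrac{1}{|\nabla u|} = -|\nabla u|^{-2}\tilde\nabla|\nabla u|$ delivers the identity. Packaging this term together with $\tfrac12(\mathcal{S}+|\mathring A|^2)$ into $E(\tau)$ (noting $|\tilde\nabla|\nabla u||^2/|\nabla u|^2 = |\tilde\nabla|\nabla u|^2|^2/(4|\nabla u|^4)$, matching the paper's notation) yields the claimed formula. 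No further curvature identities are needed.
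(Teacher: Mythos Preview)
Your argument is correct and is exactly the intended derivation: the paper states Lemma~\ref{l-basic-2} without proof, recording it (together with Lemma~\ref{l-basic-1}) as ``basic computations'' in the appendix, and your method of differentiating under the integral using $\partial_\tau d\sigma = \tfrac{H}{|\nabla u|}d\sigma$, the evolution of $|\nabla u|^2$, and the evolution of $H$, followed by one integration by parts on the closed level set, is the straightforward route from Lemma~\ref{l-basic-1} to Lemma~\ref{l-basic-2}.

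One small remark on your parenthetical about $E(\tau)$: your computation correctly produces $\int_{\Sigma_\tau} |\wt\nabla|\nabla u||^2/|\nabla u|^2$ from the integration by parts, and this is what the formula requires. The paper's typesetting $\frac{|\wt\nabla |\nabla u|^2|}{|\nabla u|^2}$ appears to be a typo (read literally it would be $2|\wt\nabla|\nabla u||/|\nabla u|$, which does not match); your identity $|\wt\nabla|\nabla u||^2/|\nabla u|^2 = |\wt\nabla(|\nabla u|^2)|^2/(4|\nabla u|^4)$ is correct but is not what the paper prints, so there is no need to force a match---just record the term you actually obtained.
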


\section{On equation \eqref{eq-bdry-system}}\label{s-equations}
In $n$-dimension, \eqref{eq-bdry-system} takes the form:
\be\label{e-equations}
\Delta u=\a u_{\nu\nu}+\frac{(n-1)|\nabla u|^2}u.
\ee
where $2-n\le \a\le 1.$  We  assume $u>0$ and $|\nabla u|>0$.

\begin{lma}\label{l-equations}
\begin{enumerate}
  \item [(i)] If $2-n<\a<1$, then $u$ is a solution to \eqref{e-equations} if and only if $U=u^{-a}$ is a $p$-harmonic function. Here $p=2-\a$ so that $1<p<n$ and $a=(n-p)/(p-1)$.
  \item[(ii)] If $\a=1$, then $u$ is a solution to    \eqref{e-equations} if and only if the level sets of  $U=(n-1)\log u $ is a solution to the inverse mean curvature flow. Namely
      $$\div(\frac{\nabla U}{|\nabla U|})=|\nabla U|.
      $$
  \item[(iii)]  If $\a=2-n$, then $u$ is a solution to    \eqref{e-equations} if and only $U= \log u $ is $n$-harmonic.
\end{enumerate}

\end{lma}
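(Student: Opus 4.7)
My plan is to treat all three parts uniformly by exploiting the fact that each target equation (the $p$-Laplace, IMCF, and $n$-Laplace equations) can be rewritten in ``$\alpha$-form,'' i.e.\ as a linear combination of $\Delta U$ and $U_{\nu_U\nu_U}$ plus lower-order terms, and each candidate function $U$ is a scalar transformation $U=F(u)$ of $u$. Under such a transformation,
\[
\nabla U = F'(u)\nabla u,\qquad \nabla^2 U = F'(u)\nabla^2 u + F''(u)\,\nabla u\otimes\nabla u,
\]
so
\[
\Delta U = F'\Delta u + F''|\nabla u|^2,\qquad U_{\nu_U\nu_U} = F'u_{\nu\nu} + F''|\nabla u|^2,
\]
using that $\nu_U=\pm\nu$ whenever $F'\ne 0$. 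These two identities are the only computational input I will need.

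For part (i), I will first recall that, under the assumption $|\nabla U|>0$, the $p$-Laplace equation is equivalent to $\Delta U + (p-2)U_{\nu_U\nu_U}=0$. Setting $U=u^{-a}$ with $a=(n-p)/(p-1)$, so that $F'=-au^{-a-1}$ and $F''=a(a+1)u^{-a-2}$, I substitute into this equation, divide by $-au^{-a-1}$, and collect terms to get
\[
\Delta u + (p-2)u_{\nu\nu} - (a+1)(p-1)\,u^{-1}|\nabla u|^2 = 0.
\]
Since $(a+1)(p-1)=n-1$ and $\alpha=2-p$, this is exactly \eqref{e-equations}. The condition $2-n<\alpha<1$ corresponds precisely to $1<p<n$, keeping $a>0$, so the substitution is a bijection between positive solutions.

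For part (ii), the inverse mean curvature flow equation $\mathrm{div}(\nabla U/|\nabla U|)=|\nabla U|$ becomes, using $H_U=(\Delta U - U_{\nu_U\nu_U})/|\nabla U|$ from Lemma \ref{l-basic-1}, the scalar identity $\Delta U - U_{\nu_U\nu_U}=|\nabla U|^2$. Plugging in $U=(n-1)\log u$, so $F'=(n-1)u^{-1}$ and $F''=-(n-1)u^{-2}$, both $\Delta U$ and $U_{\nu_U\nu_U}$ pick up the same $F''|\nabla u|^2$ term and it cancels, leaving $(n-1)u^{-1}(\Delta u - u_{\nu\nu})=(n-1)^2 u^{-2}|\nabla u|^2$, which rearranges to \eqref{e-equations} with $\alpha=1$. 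For part (iii), the $n$-Laplace equation (for $|\nabla U|>0$) reduces to $\Delta U+(n-2)U_{\nu_U\nu_U}=0$; setting $U=\log u$ with $F'=u^{-1}$, $F''=-u^{-2}$, the same substitution gives $\Delta u + (n-2)u_{\nu\nu}=(n-1)u^{-1}|\nabla u|^2$, which is \eqref{e-equations} with $\alpha=2-n$.

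There is no real obstacle here since each case is a one-line algebraic check once the transformation formulas for $\Delta U$ and $U_{\nu_U\nu_U}$ are in place; the only subtlety is to note that all three target PDEs are to be interpreted in the regime where $|\nabla U|>0$ (equivalently $|\nabla u|>0$), so that the $\alpha$-form of each equation is legitimate and $\nu_U$ is well-defined with $\nu_U=\pm\nu$.
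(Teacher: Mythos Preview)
Your proof is correct and takes essentially the same approach as the paper: a direct computation relating $\Delta U$ and $U_{\nu_U\nu_U}$ to $\Delta u$ and $u_{\nu\nu}$ under the substitution $U=F(u)$, then matching coefficients. The paper only writes out case~(i) explicitly (going from $U$ to $u=U^{-1/a}$ rather than the reverse) and leaves the other cases and the converse to the reader; your unified $F(u)$ setup is slightly more streamlined but not a different method.
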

\begin{proof} This follows from direct computations. We only prove (i).  Let $U$ be $p$-harmonic in $(M^n,g)$ with $1<p<n$, so that
\bee
\Delta U=(2-p)U_{\nu\nu},
\eee
with $\nu=\nabla U/|\nabla U|$. Here we assume that $U>0, |\nabla U|>0$.
Let $u=U^{-\frac1a}$ where $a=(n-p)/(p-1)$. Then
\bee
\begin{split}
\Delta u=&-\frac1a U^{-1-\frac1a}\Delta U+\frac1a(1+\frac1a)U^{-2-\frac1a}|\nabla U|^2\\
=&-\frac1a U^{-1-\frac1a}(2-p)U_{\nu\nu}+\frac1a(1+\frac1a)U^{-2-\frac1a}|\nabla U|^2
\end{split}
\eee
Since $\wt\nu=:\nabla u/|\nabla u|=-\nu$,
\bee
u_{\nu\nu}=-\frac1a U^{-1-\frac1a}U_{\nu\nu}+\frac1a(1+\frac1a)U^{-2-\frac1a}|\nabla U|^2.
\eee
Hence
\bee
\begin{split}
\Delta u-(2-p)u_{\nu\nu}=&(p-1)\frac1a(1+\frac1a)U^{-2-\frac1a}|\nabla U|^2\\
=&(n-1)a^{-2}U^{-2-\frac1a}|\nabla U|^2\\
=&(n-1)\frac{|\nabla u|^2}u,
\end{split}
\eee
because $|\nabla u|^2=a^{-2}U^{-2-\frac2a}|\nabla U|^2$.
Hence $U$ is $p$-harmonic. The converse can be proved similarly.
\end{proof}

 \begin{lma}\label{c-integral formula-h-more}
Suppose the equality holds in Corollary \ref{c-integral formula-h} and $ \alpha  = 1 $.
Then $g$ can be written as
\bee
\begin{split}
g =&\frac{1}{\frac12\chi-  m_H\rho^{-1}}d\rho^2+\rho^2\gamma_0
\end{split}
\eee
where
$$
m_H=\lf(\frac{|\p_-M|}{4\pi}\ri)^\frac12\lf(2\pi\chi-\frac14\int_{\p_-M}H^2 \ri),
$$
 and $\gamma_0$ is a metric of constant curvature and with area $4\pi$. Moreover $u=C\rho$ for some constant $C$.
\end{lma}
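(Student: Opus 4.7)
The plan is to extract the rigidity conditions from the equality case of Corollary \ref{c-integral formula-h} when $\alpha = 1$, and then integrate the resulting structure equations. Equality forces $\cS \equiv 0$ and $|T|^2 - T^2(\nu,\nu) = 0$, where $T = \nabla^2 u + u^{-1}(\nabla u\otimes\nabla u - |\nabla u|^2 g)$. Unpacking $T(X,Y) = 0$ and $T(X,\nu) = 0$ for $X, Y$ tangent to level sets, using the identities from Lemma \ref{l-basic-1}, shows that each regular level set $\Sigma_t = \{u = t\}$ is totally umbilical with $A = \frac{|\nabla u|}{u}\, g|_{\Sigma_t}$, and that $|\nabla u|$ is constant along $\Sigma_t$. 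Writing $|\nabla u| = \eta(t)$, the metric takes the warped-product form $g = \eta(t)^{-2}\, dt^2 + \gamma_t$, and the umbilic condition $\p_t \gamma_t = \tfrac{2}{t}\gamma_t$ integrates to $\gamma_t = (t/t_0)^2 \gamma_{t_0}$.

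To determine $\eta(t)$ and the intrinsic geometry of $\gamma_{t_0}$, use the evolution equation for $H$ from Lemma \ref{l-basic-1}. Since $\cS = 0$, $|\mathring A|^2 = 0$, and $\tilde\Delta(1/|\nabla u|) = 0$ (as $|\nabla u|$ is constant on each $\Sigma_t$), the formula reduces to $\eta\, \p_t H = -\tfrac{3}{4}H^2 + K$, where $K$ is the Gaussian curvature of $\Sigma_t$. Substituting $H = 2\eta/t$ and letting $\phi(t) = t\eta^2$ yields $\phi'(t) = t^2 K$. Since $\phi$ depends only on $t$, the quantity $t^2 K$ must be constant on each $\Sigma_t$; combined with $K_t = (t_0/t)^2 K_{t_0}$ from the conformal scaling of $\gamma_t$, this forces $K_{t_0}$ itself to be constant, so $\gamma_{t_0}$ has constant curvature. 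Gauss--Bonnet together with the connectedness of level sets \cite[pp.~9--10]{AMMO2022} identifies $t^2 K = 2\pi\chi\, t_0^2/|\gamma_{t_0}|$, and integrating gives $\eta^2(t) = \kappa + C/t$ for $\kappa = 2\pi\chi\, t_0^2/|\gamma_{t_0}|$ and an integration constant $C$.

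Finally, choose $C_0 > 0$ so that $\gamma_0 := (C_0/t_0)^2 \gamma_{t_0}$ has area $4\pi$, and set $\rho = t/C_0$, so that $u = C_0 \rho$ and $\gamma_t = \rho^2 \gamma_0$; the scaling identity $|\gamma_0|=4\pi$ makes $\kappa/C_0^2 = \chi/2$. Direct substitution then yields
\[
g \;=\; \frac{C_0^2}{\eta^2(C_0 \rho)}\, d\rho^2 + \rho^2 \gamma_0 \;=\; \frac{d\rho^2}{\tfrac{\chi}{2} - m_H\,\rho^{-1}} + \rho^2 \gamma_0,
\]
with $m_H := -C/C_0^3$. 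To identify $m_H$ as a boundary quantity, evaluate $\eta^2 = \chi/2 - m_H/\rho$ at $\rho_- = \sqrt{|\p_- M|/(4\pi)}$ and substitute $H|_{\p_- M} = 2\eta(c_-)/c_-$; since $H$ is constant on $\p_- M$, $\int_{\p_- M} H^2 = H^2 |\p_- M|$, and solving for $m_H$ recovers the stated expression after routine algebraic manipulation. The principal obstacle is the intrinsic-geometry step: showing that umbilicity plus constancy of $|\nabla u|$ along level sets forces $\gamma_{t_0}$ itself to be of constant curvature, i.e.\ that level sets are genuine space forms rather than merely a family of umbilic surfaces. This relies crucially on the $H$-flow equation from Lemma \ref{l-basic-1}, which converts the structural rigidity into an ODE in $t$ alone.
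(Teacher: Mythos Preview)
Your proof is correct, but the route you take to pin down $\eta(t)$ differs from the paper's. After the common first step (equality forces $\cS=0$, umbilicity, $|\nabla u|$ constant on level sets, and the warped-product form $g=\eta^{-2}dt^2+\gamma_t$), the paper does \emph{not} appeal to the pointwise $H$-evolution formula of Lemma \ref{l-basic-1}. Instead it uses equality in Theorem \ref{T: integral formula-h-intro} with $\beta=0$ between $\partial_-M$ and each level set: this says precisely that the boundary expression in \eqref{eq-integration-mono} is constant in the level-set parameter, which (after the substitution $H=\eta/t$ in the variable $v=u^2$) is exactly the statement that the Hawking-type quantity $t^{1/2}\bigl(2\pi\chi-\tfrac14\int_{\{v=t\}}H^2\bigr)$ is constant. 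Solving that algebraic identity gives $\eta^2$ directly, and the change of variable $t=r^2$ followed by a rescaling produces the stated metric.

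Your alternative---reducing the $H$-flow equation to the ODE $(t\eta^2)'=t^2K$ and integrating---is equally valid and arguably cleaner on one point: it yields the constancy of $K$ on each level set (hence the constant-curvature conclusion for $\gamma_0$) as an immediate byproduct of the ODE, whereas the paper simply asserts that $\gamma_0$ has constant curvature at the end. Conversely, the paper's argument is more conceptual: it is literally the equality case of the monotonicity formula, so no separate evolution equation is needed. Both routes lead to $\eta^2=\kappa+C/t$ and the same final form after the rescaling you describe.
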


\begin{proof}
In the proof of Corollary \ref{c-integral formula-h}, if $\a=1$, then $\b=0$ and by   Theorem \ref{T: integral formula-h-intro},
\bee
2\pi \chi \tau+\int_{\{u=\tau\}}\lf(|\nabla u|^2- H|\nabla u|\ri)
=2\pi   \chi c_-+\int_{\p_-M}\lf(|\nabla u|^2- H|\nabla u|\ri)
\eee
for all $c_-\le \tau\le c_+$. In terms of $v$, we have
\bee
t^\frac12\lf(2\pi \chi -\frac14  \int_{\{v=t\}}H^2\ri)
=t^\frac12_0\lf(2\pi \chi -\frac14  \int_{\{v=t_0\}}H^2\ri)=:m
\eee
because $H=\frac{|\nabla v|}v=\frac{\eta}t$. We have
\bee
\eta^2(t)=4t_0|\Sigma_{t_0}|^{-1}t \lf(2\pi\chi-mt^{-\frac12}\ri)
\eee
because $|\Sigma_t|=tt_0^{-1}|\Sigma_{t_0}|$.
Hence, if we let $t=r^2$, then
\bee
\begin{split}
g=&\frac{|\Sigma_{t_0}|dt^2}{4t_0 \lf(2\pi\chi-mt^{-\frac12}\ri)}+tt_0^{-1}\gamma_{t_0}\\
=&\frac{|\Sigma_{t_0}|dr^2}{ t_0 \lf(2\pi\chi-mr^{-1}\ri)}+r^2t_0^{-1}\gamma_{t_0}\\
=&\frac{d\rho^2}{\frac12\chi-\wt m\rho^{-1}}+\rho^2\gamma_0 ,
\end{split}
\eee
where $\rho=\lf(\frac{|\Sigma_{t_0}|}{4\pi t_0}\ri)^\frac12 r$ and
$\wt m=\lf(\frac{|\Sigma_{t_0}|}{4\pi t_0}\ri)^\frac12 m$. $\gamma_0$ is a metric of constant curvature and with area $4\pi$.
\end{proof}

\section{Comparing $\cA(t), \cB(t), F(t)$ to the Hawking mass}\label{s-F}
We compare $\cA(t), \cB(t), F(t)$ with the Hawking mass. Let  $1>u>0$ be $p$-harmonic with $1<p<3$ with  $|\nabla u|>0$.
Recall that
$$
   \cB(t)=4\pi t-(ca)^{-2}t^{2a+1}\int_{\{ u = 1-ct^{-a}\}   }|\nabla u|^2.
$$
   Denote $c$ by $c_p$ because it depends on $p$. Let    $U=(1-p)\log (1-u)$. Direct computations show that   $U$ satisfies  (see \cite{Moser2007}):
   $$
   \div\lf(|\nabla U|^{p-2}\nabla U\ri)=|\nabla U|^p.
   $$
   In terms of $U$,
\bee
   \cB(t)=4\pi c_p^\frac 1a\exp\lf(\frac{\tau}{3-p}\ri)\lf(1  -\frac 1{4\pi(3-p)^2}\int_{\{ U=\tau  \}}|\nabla U|^2\ri)
\eee
    where $\tau=(1-p)\log (c_pt^{-a})$. By \cite{Moser2007}, see also \cite{KotschwarNi2009}, $U$ will converge to the weak solution $U_1$ of the inverse mean curvature flow as $p\to 1$ in some weak sense. If the convergence is $C^2$,  then one can see that $\cB$ will converge to a constant multiple of the Hawking mass of a level set of $U_1$. Similarly, $\cA$ and $F$ also converge to constant multiples of the Hawking mass.

On the other hand,
\bee
\begin{split}
| \nabla U |^p  =&\div(\frac{\nabla U}{|\nabla U|}|\nabla U|^{p-1})\\
=&|\nabla U|^{p-2}\lf( | \nabla U | H+(p-1)U_{\nu\nu}\ri).
\end{split}
\eee
Here $H$ is the mean curvature of the level surfaces of $U$. Hence,
\be
| \nabla U|^2 = |\nabla U|H+(p-1)U_{\nu\nu} .
\ee
This gives the following forms of $\cB, \cA$ in terms of the mean curvature, which also indicate that as $p\to 1$,  $\cB, \cA$ will converge to multiples of the Hawking mass of the level surface of the inverse mean curvature flow.
\begin{prop}\label{p-B-Hawking}
\bee
\begin{split}
\cB(\tau)
=&4\pi c_p^\frac 1a\exp\lf(\frac{\tau}{3-p}\ri)\lf[1-\frac1{4\pi(3-p)^2} \int_{\{U=\tau\}}  \lf(    H+(p-1)\frac{U_{\nu\nu}}{|\nabla U|} \ri)^2  \ri] ,
\end{split}
\eee
\bee
\begin{split}
\cA(\tau)
  =& 8\pi c_p^\frac 1a \exp( \frac \tau{3-p})\lf[1-\frac 1{8\pi(3-p)} \int_{\{U=\tau\}}    H \lf(    H+(p-1)\frac{U_{\nu\nu}}{|\nabla U|} \ri)\ri].
\end{split}
\eee
\end{prop}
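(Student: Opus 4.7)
\textbf{Proof plan for Proposition \ref{p-B-Hawking}.}

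The proof is essentially an algebraic repackaging, using the change of variables $U=(1-p)\log(1-u)$ and $\tau=(1-p)\log(c_pt^{-a})$ introduced just before the proposition statement, together with a single identity coming from the $p$-Laplace equation. First I would verify the substitution: since $1-u = c_p t^{-a}$ on the level set $\{u=1-c_pt^{-a}\}=\{U=\tau\}$, the relation $\tau = (1-p)\log(c_p t^{-a}) = (1-p)\log c_p + (3-p)\log t$ inverts to $t = c_p^{1/a}\exp(\tau/(3-p))$, using $(p-1)/(3-p)=1/a$. Moreover, $\nabla U = \tfrac{p-1}{1-u}\nabla u$ is a positive multiple of $\nabla u$, so the unit normal $\nu$ and mean curvature $H$ of the level sets coincide for $u$ and $U$, and on $\{u=1-c_pt^{-a}\}$ one has $|\nabla u| = \tfrac{c_pt^{-a}}{p-1}|\nabla U|$.

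Next, I would substitute these into the definitions of $\cB(t)$ and $\cA(t)$. For $\cB$, the coefficient simplifies as
$$(c_pa)^{-2}t^{2a+1}\cdot\frac{c_p^2 t^{-2a}}{(p-1)^2}=\frac{t}{a^2(p-1)^2}=\frac{t}{(3-p)^2},$$
since $a(p-1)=3-p$. This gives the intermediate identity
$$\cB(\tau) = 4\pi c_p^{1/a}\exp\!\Big(\tfrac{\tau}{3-p}\Big)\Big[1-\tfrac{1}{4\pi(3-p)^2}\int_{\{U=\tau\}}|\nabla U|^2\Big]$$
already displayed in the excerpt. Similarly for $\cA$, the coefficient $(c_pa)^{-1}t^{a+1}\cdot\tfrac{c_pt^{-a}}{p-1} = \tfrac{t}{a(p-1)} = \tfrac{t}{3-p}$, producing
$$\cA(\tau) = 8\pi c_p^{1/a}\exp\!\Big(\tfrac{\tau}{3-p}\Big)\Big[1-\tfrac{1}{8\pi(3-p)}\int_{\{U=\tau\}}|\nabla U|H\Big].$$

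The final ingredient is the pointwise identity
$$|\nabla U| = H + (p-1)\frac{U_{\nu\nu}}{|\nabla U|},$$
which I would derive by expanding the $p$-harmonic equation for $U$. Namely,
$$|\nabla U|^p = \mathrm{div}\bigl(|\nabla U|^{p-2}\nabla U\bigr) = |\nabla U|^{p-2}\bigl[(p-2)U_{\nu\nu}+\Delta U\bigr] = |\nabla U|^{p-2}\bigl[(p-1)U_{\nu\nu}+|\nabla U|H\bigr],$$
using $\Delta U = |\nabla U|H+U_{\nu\nu}$ from Lemma \ref{l-basic-1}. Dividing by $|\nabla U|^{p-1}$ yields the displayed identity. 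Squaring it gives the replacement for $|\nabla U|^2$ inside the $\cB$ formula, and multiplying it by $H$ gives the replacement for $|\nabla U|H$ inside the $\cA$ formula; these substitutions produce exactly the two claimed expressions.

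The argument is straightforward and does not really contain an obstacle: the only small point to keep track of is the sign conventions (verifying $\nabla U$ is a positive scalar multiple of $\nabla u$, so that $H$ and $\nu$ agree), together with the bookkeeping of the algebraic identities $a(p-1)=3-p$ and $(p-1)/(3-p)=1/a$ that collapse the constants.
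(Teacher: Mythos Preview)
Your proposal is correct and follows essentially the same approach as the paper: the paper too records the intermediate expression for $\cB$ in terms of $\int_{\{U=\tau\}}|\nabla U|^2$, derives the identity $|\nabla U|^2=|\nabla U|H+(p-1)U_{\nu\nu}$ from the equation $\div(|\nabla U|^{p-2}\nabla U)=|\nabla U|^p$ (written there via $\div(\nu\,|\nabla U|^{p-1})$ rather than your $|\nabla U|^{p-2}[(p-2)U_{\nu\nu}+\Delta U]$, but these are of course the same computation), and then substitutes. One small wording point: you refer to ``the $p$-harmonic equation for $U$,'' but $U$ is not $p$-harmonic---it satisfies $\div(|\nabla U|^{p-2}\nabla U)=|\nabla U|^p$, which is the equation you actually (and correctly) use.
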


\vspace{.3cm}

\end{document}